\newtheorem{thm}{Theorem}[section]
\newtheorem{lem}[thm]{Lemma}
\newtheorem{prop}{Proposition}
\newtheorem{cor}[thm]{Corollary}
\newtheorem{definition}{Definition}[section]
\newtheorem{remark}{Remark}
\numberwithin{equation}{section}
\newcommand{\Q}{\ensuremath{\mathbb{Q}}}
\newcommand{\R}{\ensuremath{\mathbb{R}}}
\newcommand{\Z}{\ensuremath{\mathbb{Z}}}
\newcommand{\N}{\ensuremath{\mathbb{N}}}
\newcommand{\PP}{\ensuremath{\mathbb{P}}}
\newcommand{\E}{\ensuremath{\mathbb{E}}}
\newcommand{\G}{\ensuremath{\mathcal{G}}}
\newcommand{\1}{\ensuremath{\mathbf{1}}}
\newcommand{\bv}{\ensuremath{\underline{v}}}
\newcommand{\bx}{\ensuremath{\underline{x}}}
\newcommand{\bnu}{\ensuremath{\underline{\nu}}}
\newcommand{\X}{\ensuremath{\underline{X}}}
\newcommand{\Y}{\ensuremath{\underline{Y}}}
\newcommand{\bX}{\ensuremath{\underline{X}}}
\newcommand{\bXi}{\ensuremath{\underline{X}_I}}
\newcommand{\de}{\ensuremath{\delta}}
\newcommand{\subs}{\ensuremath{\subseteq}}
\newcommand{\eq}{\begin{equation}
\newcommand{\ee}{\end{equation}}}
\theoremstyle{claim} }
\theoremstyle{notation} }
\theoremstyle{convention} }
\newcommand{\aLL}{\ensuremath{\overline{\Lambda}}}
\newcommand{\x}{\underline{x}}
\newcommand{\y}{\underline{y}}
\newcommand{\w}{\underline{\omega}}
\newcommand{\0}{\underline{0}}
\numberwithin{equation}{subsection}
\begin{document}

\title{Corners in dense subsets of $\mathbf{P}^d$}
\author{\'Akos Magyar and Tatchai Titichetrakun}
\email{tatchai@math.ubc.ca}

\email{magyar@math.ubc.ca}
\maketitle
\begin{abstract}
Let $\PP^d$ be the $d$-fold direct product of the set of primes. We prove that if $A$ is a subset of $\PP^d$ of positive relative upper density then $A$ contains infinitely many ``corners", that is sets of the form\\ $\{x,x+te_1,...,x+te_d\}$ where $x \in \Z^d, t \in \Z$ and $\{e_1,..,e_d\}$ are the standard basis vectors of $\Z^d$. The main tools are the hypergraph removal lemma, the linear forms conditions of Green-Tao and the transference principles of Gowers and Reingold et al.
\end{abstract}

\section{Introduction} \numberwithin{equation}{section} A remarkable result in additive number theory due to Green and Tao \cite{GT1} proves the existence of arbitrary long arithmetic progressions in the primes. It roughly states that if $A$ is a subset of the primes of positive relative upper density then $A$ contains arbitrary constellations, that is non-trivial affine copies of any finite set of integers. It is closely related to Szemer\'edi's theorem \cite{SZ} on the existence of long arithmetic progressions in dense subsets of the integers, in fact it might be viewed as a relative version of that.
Another basic result in this area is the multi-dimensional extension of Szemer\'edi's theorem first proved by Furstenberg and Katznelson \cite{FK1}, which states that if $A\subs\Z^d$ is of positive upper density then $A$ contains non-trivial affine copies of any finite set $F\subs\Z^d$. The proof in \cite{FK1} uses ergodic methods however a more recent combinatorial approach was developed by Gowers \cite{GW1} and also independently by Nagel, R\"odl and Schacht \cite{R1}.
\\\\
It is natural to ask if both results have a common extension, that is if the Furstenberg-Katznelson theorem can be extended to subsets of $\PP^d$ of positive relative upper density, that is when the base set of integers are replaced by that of the primes. In fact, this question was raised by Tao \cite{TA2}, where the existence of arbitrary constellations among the Gaussian primes was shown. A partial result, extending the original approach of \cite{GT1}, was obtained earlier by B. Cook and the first author \cite{MC1}, where it was proved that relative dense subsets of $\PP^d$ contain an affine copy of any finite set $F\subs\Z^d$ which is in \emph{general position}, meaning that each coordinate hyperplane contains at most one point of $F$.
\\\\
However when the set $F$ is not in general position, it does not seem feasible to find a suitably pseudo-random measure supported essentially on the $d$-tuples of the primes, due to the self-correlations inherent in the direct product structure. For example, if we want to count corners $\{(a,b),(a+d,b),(a,b+d)\}$ in $A\subs \PP^2$ then if $(a+d,b),(a,b+d) \in \PP^2$ then the remaining vertex $(a,b)$ must also be in $\PP^2$. Thus the probability that all three vertices are in $\PP^2$ (or in the direct product of the almost primes) is not $(\log\,N)^{-6}$ as one would expect, but roughly $(\log\,N)^{-4}$, preventing the use of any measure of the form $\nu\otimes\nu$.
\\\\
In light of this our method is different, based on the hypergraph approach partly used already in \cite{TA2}, where one reduces the problem to that of proving a hypergraph removal lemma for weighted uniform hypergraphs. The natural approach is to use an appropriate form of the so-called transference principle \cite{GW2}, \cite{RTTV} to remove the weights and apply the removal lemmas for ``un-weighted" hypergraphs, obtained in \cite{GW1}, \cite{R1}, \cite{TA3}. This way our argument also covers the main result of \cite{MC1} and in particular that of \cite{GT1}. Very recently another proof of the (one dimensional) Green-Tao theorem and the main result of \cite{TA2}, based on a removal lemma for uniform hypergraphs, has been given in \cite{CFZ}. An interesting feature of the argument there is that it only uses the so-called linear forms condition of \cite{GT1}.
\\

Recall that a set $A \subseteq \PP^d$ has upper relative density $\alpha$ if
\[
\limsup_{N \rightarrow \infty}\frac{|A \cap \PP_N^d |}{|\PP_{N}^d|} = \alpha
\]
Let us state our main result.
\begin{thm}
\label{MainTheorem}
Let $A \subset (\mathbb{P}_N)^d $ with positive relative upper density $\alpha>0$
then $A$ contains at least $C(\alpha)\frac{N^{d+1}}{(\log N)^{2d}}$
corners for some (computable) constant $C(\alpha)>0$.
\end{thm}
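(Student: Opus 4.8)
The plan is to run the hypergraph-removal strategy for counting corners, transferred from $\Z^d$ to the primes. First I would apply the $W$-trick: with $W=\prod_{p\le w}p$ for a slowly growing $w=w(N)$, I partition $\PP_N^d$ into residue classes modulo $W$ in each coordinate and pass to the fibre $(b_1,\dots,b_d)$ on which $A$ is densest; the affine rescaling $n\mapsto(n-b_i)/W$ in the $i$-th coordinate then identifies the primes in this fibre with a subset of $\Z_{N'}^d$, $N'\sim N/W$, whose normalised counting measure is majorised by a pseudorandom measure $\nu$ on $\Z_{N'}$ (built from a truncated divisor sum) that is uniformly distributed in the sense of the linear forms condition, while $A$ retains relative density $\ge\alpha-o(1)$.

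Next I would encode corners as simplices in a $(d+1)$-partite $d$-uniform hypergraph $H$. Its vertex classes $V_1,\dots,V_{d+1}$ are copies of $\Z_{N'}$: a vertex $c_i\in V_i$ with $i\le d$ represents the hyperplane $\{z_i=c_i\}$, while $c_{d+1}\in V_{d+1}$ represents $\{z_1+\cdots+z_d=c_{d+1}\}$. For each $i$ the $d$-edge $[d+1]\setminus\{i\}$, evaluated on vertices $(c_j)_{j\ne i}$, determines the point $p_i\in\Z_{N'}^d$ cut out by those $d$ hyperplanes; I include this edge exactly when $p_i\in A$ and weight it by a product of one-variable $\nu$-weights of the linear forms giving the coordinates of $p_i$, distributed among the $d+1$ edges so that over a simplex the weights multiply to $\prod_{i=1}^{d}\nu(x_i)\nu(x_i+t)$ --- precisely the $2d$ coordinate values that must be prime for a corner $\{x,x+te_1,\dots,x+te_d\}\subset A$. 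A simplex $K^{(d)}_{d+1}$ of $H$ is exactly such a corner, and the degenerate ones (those with $t\equiv 0$) number only $O((N')^d)$.

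The core of the proof is to delete the weights. Here I would use the linear forms condition of Green--Tao \cite{GT1} to verify that the weight attached to each edge is dominated by a genuinely pseudorandom measure for every system of linear forms arising from the $d+1$ intersecting hyperplanes; the decisive point is that each edge weight is a product of one-variable weights in \emph{distinct} linear forms, not a $d$-fold tensor $\nu^{\otimes d}$ evaluated at each vertex of the corner, so the direct-product self-correlations highlighted in the introduction do not manufacture the spurious extra powers of $\log N$. Granting pseudorandomness, the transference/dense-model principle of Gowers \cite{GW2} and of Reingold et al.\ \cite{RTTV} supplies, for each edge, a function $0\le g_e\le 1$ with the same average as the weighted edge density and close to it in the Gowers-type box norm controlling weighted simplex counts; hence the weighted simplex count of $H$ agrees, up to $o(1)$ in the correct normalisation, with the unweighted simplex count of a bounded $d$-uniform hypergraph all of whose edge densities are $\gg_\alpha 1$.

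Finally I would apply the hypergraph removal lemma for $d$-uniform hypergraphs \cite{GW1,R1,TA3}: a bounded hypergraph with all edge densities bounded below contains $\gg_\alpha(N')^{d+1}$ simplices, so subtracting the $O((N')^d)$ degenerate ones leaves $\gg_\alpha(N')^{d+1}$ genuine corners in $A$; undoing the normalisations, each of the $2d$ coordinate values $x_1,\dots,x_d,x_1+t,\dots,x_d+t$ carried a factor $\sim\log N$ from the $W$-tricked von Mangoldt weight, which yields the claimed bound $C(\alpha)N^{d+1}/(\log N)^{2d}$. I expect the hardest step to be the transference: one must bound the error it introduces not in $L^2$ but in the weak combinatorial norm governing the removal lemma, and at the same time establish the full family of linear forms (and correlation) conditions for $\nu$ against all systems coming from the $d+1$ hyperplanes --- which is exactly the place where the self-correlations of $\PP^d$ have to be shown harmless.
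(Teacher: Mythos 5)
Your proposal matches the paper's approach essentially step for step: $W$-trick, encoding corners as simplices of a weighted $(d+1)$-partite hypergraph with the weights distributed across the $1$-edges and $d$-edges so that each simplex picks up exactly the $2d$ factors of $\nu$ in pairwise independent linear forms (the ``independent weight system'' of Definition~2.1), a weighted Gowers--Cauchy--Schwarz and generalized von~Neumann inequality, transference in the spirit of Gowers and Reingold et al., and finally the unweighted simplex removal lemma. The one technical wrinkle you should anticipate at the transference step --- and where the paper deviates from the vanilla dense-model theorem --- is that the weighted dual functions $\mathcal{D}f$ are \emph{not} uniformly bounded in this setting, so one must first pass to the set $\Omega(T)$ on which the dual is bounded by $T$ (this uses the correlation condition, not just linear forms), run the Hahn--Banach/dense-model argument on $\Omega(T)$ to produce a bounded piece and a piece small in the box norm, and separately control the complementary mass $f\cdot\1_{\Omega(T)^C}$ via the linear forms condition; with that modification your sketch goes through.
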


As mentioned above, we will use the hypergraph approach which has been used to establish the the existence of corners (and then that of general constellations) in dense subsets $\Z^d$ \cite{GW1} \cite{R1}. This was first observed, in the the case of 2-dimensional corners by Solymosi \cite{S1}, where the key tool was to apply to so-called triangle removal lemma of Ruzsa and Szemer\'edi. In our weighted setting, this method allows us to distribute the weights so that we can avoid dealing with higher moments of the Green-Tao measure $\nu$. We will define the notion and prove some facts for independent weight systems for which the weight systems related to corners is just a special case. The reason that we cannot handle arbitary constellations is that we don't quite have a suitable removal lemma (e.g. Thm. 5.1) for general weight systems on non-uniform hypergraphs. Indeed for general constellations our approach leads to a weighted hypergraph with weights possibly attached to any lower dimensional hyperedge, making it difficult to apply transference principles to remove the weights. Thus one needs different ideas which is addressed by Cook and the authors in a separate paper \cite{CoMaTi}. Simultaneously a completely different approach, based on an "infinite form" of the linear forms condition and on the recent work on inverse Gowers conjectures \cite{GT2}, \cite{GT3}, \cite{GT4}, has also been developed by Tao and Ziegler \cite{TZ}.

\subsection{Notation}
$[N]:=\{ 1,2, ..., N\}, [M,N]:=\{M,M+1,...,M+N\}, \mathbb{P}_N := \mathbb{P} \cap [N].$\\
Write $\x=(x_1,...,x_d), \y=(y_1,...,y_d),\w \in \{0,1\}^d,$ let $P_{\w}:\Z_N^{2d} \rightarrow \Z_N^d$ be the projection defined by
\[
P_{\w}(\x,\y) = \underline{u}=(u_1,...,u_d), u_j= \begin{cases} x_j &\text{if}\quad \omega_j=0 \\  y_j &\text{if}\quad \omega_j=1  \end{cases}
\]
For each $I \subseteq [d], \x_I=(x_i)_{i \in I}.$ We may denote $\x$ for $\x_{[d]}$ when we work in $\Z_N^d.$
$\w_I$ means elements in $\{0,1\}^{|I|}$. Similarly we may write $\w$ for $\w_{[d]}$. We also define $P_{\w_I}(\x_I,\y_I)$ in the same way. $\w |_{I}$ is the $\w$ restricted to the index set $I.$ \\
For finite sets $X_j, j\in [d], I \subseteq [d]$ then $\X_I :=\prod_{j \in I}X_j$ and
\[
P_{\w_I}(\X_I,\Y_I)=\prod_{i \in I}Z_i,~~~~~~~~~~~~~~~~~~~~~~~~~~~~~~~~~ Z_i= \begin{cases} X_i &, \w_I(i)=0 \\  Y_i &, \w_I(i)=1
\end{cases}\]
If we want to fix on some position, we can write  for example $ \w_{(0,[2,d])}$ means element in $\{0,1\}^d$ such that the first position is $0$.\\
Also  for each $\w$, define $\y_{1(\w)} \in \Z^d$ by
\[
(\y_{1(\w)})_i =
\begin{cases}
0 &\text{if}\quad \omega_i=0 \\  y_i &\text{if}\quad \omega_i=1
\end{cases}
, 1 \leq i \leq d.\]
$\y_{0(\w)} \in \Z^d$ is also defined similarly. \\
For any finite set $X$ and $f:X \rightarrow \R$, and for any measure $\mu$ on $X$,
\[
\E_{x \in X}f(x):=\frac{1}{|X|}\sum_{x \in X}f(x), \int_X f d\mu:=\frac{1}{|X|}\sum_{x \in X}f(x)\mu(x)
\]
Unless otherwise specified, the error term $o(1)$ means a quantity that goes to $0$ as $N,W \rightarrow \infty.$

\section{Weighted hypergraphs and box norms.} \numberwithin{equation}{section}

\subsection{Hypergraph setting.}
First let us parameterize any affine copies of a corner as follows
\begin{definition} A non-degenerate corner is given by the following set of $d-$tuples of size $d+1$ in $\Z^d$ (or $\Z_N^d$):
\[
\{ (x_1,...,x_d),(x_1+s,x_2,...,x_d),...,(x_1,...,x_{d-1},x_d+s),s \neq 0 \}
\]
or equivalently,
\[
\{(x_1,...,x_d),(z-\sum_{\substack{1 \leq j \leq d\\ j \neq 1}}x_j,x_2,...,x_d),(x_1,z-\sum_{\substack{1 \leq j \leq d\\ j \neq 2}}x_j , x_3,...,x_d),...,(x_1,...,x_{d-1},z-\sum_{\substack{1 \leq j \leq d\\ j \neq d}}x_j )   \}
\]
with $z \neq \sum_{1 \leq i \leq d}x_i$
\end{definition}

Now to a given set $A \subseteq \Z_N^d$, we assign a $(d+1)-$ partite hypergraph $\mathcal{G}_A$ as follows: \par
Let ($X_1,...,X_{d+1}$): $X_1=...=X_{d+1} :=\Z_N $ be the vertex sets, and for $j \in [1,d]$ let an element $a \in X_j$ represent the hyperplane $x_j=a$, and an element $a \in X_{d+1}$ represent the hyperplane $a=x_1+..+x_d$. We join these $d$ vertices (which represent $d$ hyperplanes) if all of these $d$ hyperplanes intersect in $A$. Then a simplex in $\G_A$ corresponds to a corner in $A$. Note that this includes trivial corners which consist of a single point.
\par  For each $I \subseteq [d+1]$ let $E(I)$ denote the set of hyperedges whose elements are exactly from vertices set $V_i,\ i \in I$. In order to count corners in $A$, we will place some weights on some of these hyperedges that will represent the coordinates of the corner. To be more precise we define the weights on $1-$edges:
\[
\nu_j(a)=\nu(a),a \in X_j, j \leq d, \hspace{3mm} \nu_{d+1}(a)=1,\ a \in X_{d+1},
\]
and on $d-$hyperedges:
 \[
 \nu_I(a)=\nu(a_{d+1}-\sum_{j \in I \backslash \{d+1\}}a_j) ,\ \ a \in E(I),\ |I|=d,\ d+1 \in I
 \]
 \[
 \nu_{[1,d]}(a)=1 ,\ a \in E([1,d])
 \]

In particular the weights are $1$ or of the form $\nu_I(L_I(\x_I))$ where all linear forms $\{L_I(\x_I)\}$ are pairwise linearly independent. This is an example of something we call independent weight system (see definition 2.1). Note that we can also parameterize any configuration of the form $\{\x,\x+t\bv_1,\ldots,x+t\bv_d\}$ in $\PP^d$ using an appropriate independent weight system.
Now for each $I=[d+1]\backslash \{j\}, 1 \leq j \leq d$ let
 $$f^I=\1_A(x_1,...,x_{j-1},x_{d+1}-\sum_{\substack{1 \leq i \leq d \\ i \neq j}}x_i,x_{j+1},...,x_d)\cdot \nu_I $$
 and for $I=[d]$ let $f^{I}=\1_A(x_1,...,x_d).$ As the coordinates of a corner contained in $\PP^d$ are given by  $2d$ prime numbers, we define
 \begin{align*} \label{Lambda}
 \Lambda :=\Lambda_{d+1}(f^{I},|I|=d):=\E_{\x_{[d+1]}}\prod_{|I|=d}f^I\prod_{i=1}^d\nu(x_i)
         &= \sum_{\substack{p_i \in A, 1 \leq i \leq 2d \\(p_i)_{1 \leq i \leq 2d}~~~~~~~~~ \text{constitutes a corner}}}\prod_{i=1}^{2d}\nu(p_i) \\
         &\approx \frac{\log^{2d}N}{N^{d+1}}|\{\text{number of corners in $A$} \}|
 \end{align*}
 Hence $\Lambda$ can be used to estimate the numbers of corners (ignoring W-trick here and assuming that $\nu(N) \approx \log N$ for now). Indeed if $\Lambda \geq C_1$ then
 \[
 |\{\text{number of corners in $A$} \}| \geq C_2 \frac{N^{d+1}}{\log^{2d}N}.
 \]

We define measure spaces associated to our system of measure as follows.  For $1 \leq i \leq d$, let $(X_i,d\mu_{X_i})=(\Z_N,\nu)$ where $\nu$ is the Green-Tao measure, and let $\mu_{X_{d+1}}$ be the normalized counting measure on $X_{d+1}=\Z_N$. With this notation one may write
\[
\Lambda:=\Lambda_{d+1}(f^{I},|I|=d)=\int_{X_1}\cdots \int_{X_{d+1}}\prod_{|I|=d}f^I\ d\mu_{X_1} \cdots d\mu_{X_{d+1}}.
\]
This is indeed a special case of
 \begin{definition}[Independent weight system]
  \label{weight}
An independent weight system is a family of weights on the edges of a $d+1-$partite hypergraph such that for any $I \subseteq [d+1],|I| \leq d$, $\nu_I(\x_I)$ is either $1$ or of the form $\prod_{j=1}^{K(I)}\nu(L^j_I(\bx_I))$ where all distinct linear forms $\{L_I^j \}_{I \subseteq [d+1],\ 1 \leq j \leq K(I)}$ are pairwise linearly independent, moreover the form $L_I^j$ depends exactly on the variables $\x_I=(x_j)_{j\in I}$.
\end{definition}

In fact for a weight system that arised from parametrizing affine copies of configurations in $\Z^d$, it is easy to see from the construction that for any $I \subseteq [d+1],|I|=d$ all distinct linear forms $\{ L^k_J\}_{J \subseteq I,1 \leq k \leq K(J)}$ are linearly independent however we don't need this fact in our paper.
We define a measure on $\X_I,I \subseteq [d+1],|I|=d$ associated to an independent weight system by
\[
\int_{\X_{[d]}}f d\mu_{\X_{[d]}}:=\E_{\x_{[d]}}f^I \cdot \prod_{I \subseteq [d],|I|< d}\nu_I(\x_I),
\]
as well as on $\X_{[d+1]}$  by
\[
\int_{\X_{[d+1]}}f d\mu_{\X_{[d+1]}}:=\E_{\x_{[d+1]}}f \cdot \prod_{I \subseteq [d+1],|I|< d}\nu_I(\x_I),
\]
and the associated multi-linear form by
 \eq \label{Lambda}
 \Lambda(f^I,|I|=d):= \int_{\X_{[d+1]}}\prod_{|I|=d}f^I d\mu_{\X_{[d+1]}}
 \ee

\subsection{Basic Properties of Weighted Box Norm}
In this section we describe the weighted version of Gowers's uniformity (box) norms and the so-called Gowers's inner product associated to the hypergraph $\mathcal{G}_A$ endowed with a weight system $\{\nu_I\}_{I\subs\,[d+1],|I|\leq d}$.
\begin{definition}
For each $1 \leq j \leq d,$ let $X_j,Y_j$ be finite set (in this paper we will define $X_j=Y_j:=\Z_N$) with a weight system $\bnu$ on $\X_{[d]} \times \Y_{[d]}$. For $f:\X_{[d]} \rightarrow \mathbb{R} $, define
\begin{align*}
\left\|f\right\|_{\Box_{\nu}}^{2^d} &:= \int_{\X_{[d]} \times \Y_{[d]} } \prod_{\w_{[d]}}f(P_{\w_{[d]}}(\x_{[d]},\y_{[d]}))d\mu_{\X_{[d]} \times \Y_{[d]}} \\
&:= \E_{\x_{[d]}}\E_{\y_{[d]}}\prod_{\w_{[d]}}f(P_{\w_{[d]}}(\x_{[d]},\y_{[d]}))\prod_{|I|<d}\prod_{\w_I}\nu_I(P_{\w_I}(\x_I,\y_I))
\end{align*}
and define the corresponding Gowers's inner product of $2^d$ functions,
\begin{align*}
\left\langle f_{\w},\w \in \{0,1\}^d \right\rangle_{\Box_{\nu}} &:= \int_{\X_{[d]} \times \Y_{[d]}} \prod_{\w_{[d]}}f_{\w_{[d]}}(P_{\w_{[d]}}(\x_{[d]},\y_{[d]}))d\mu_{\X_{[d]} \times \Y_{[d]}} \\
&:= \E_{\x_{[d]}}\E_{\y_{[d]}}\prod_{\w_{[d]}}f_{\w_{[d]}}(P_{\w_{[d]}}(\x_{[d]},\y_{[d]}))\prod_{|I|<d}\prod_{\w_I}\nu_I(P_{\w_I}(\x_I,\y_I))
\end{align*}
So $\left\langle f ,\w \in \{0,1\}^d \right\rangle_{\Box_{\nu}}=\left\|f\right\|_{\Box_{\nu}}^{2^d}.$
\end{definition}
\begin{definition}[Dual Function] For $f,g: \Z_N^d \rightarrow \R$ define the weight inner product
\[
\left\langle f,g \right\rangle_{\nu}:=\int_{X_{[d]}}f \cdot g ~~~~~~~~~~~~~~~~~~~~~~~~~~~~~~~ d\mu_{\X_{[d]}} =\E_{\x \in \Z_N^d}f(\x)g(\x)\prod_{|I|<d}\nu_I(\x_I).
\]
Define the dual function of $f$ by
\[
\mathcal{D}f :=\E_{\y \in \Z_N^d}\prod_{\w \neq \underline{0}}f(P_{\w}(\x,\y))\prod_{|I|<d}\prod_{\w_I \neq \underline{0}}\nu_I(P_{\w_I}(\x_I,\y_I))
\]
So
\begin{align*}
\left\|f\right\|_{\Box_{\nu}}^{2^d}&= \E_{\x \in \Z_N^d}f(\x)\prod_{|I|<d}\nu_I(\x_I)\bigg[ \E_{\y \in \Z_N^d}\prod_{\w \neq \underline{0}}f(P_{\w}(\x,\y))\prod_{|I|<d}\prod_{\w_I \neq \underline{0}}\nu_I(P_{\w_I}(\x_I,\y_I)) \bigg] \\
&=\left\langle f, \mathcal{D}f \right\rangle_{\nu}
\end{align*}
\end{definition}
It may not be clear immediately from the definition that  $\left\| \cdot \right\|_{{\Box^d_{\nu}}}$ is a norm but this will follow from the following theorem whose statements and the strategies of the proof are similar to analogue theorem for ordinary Gowers inner product.
\begin{thm}[Gowers-Cauchy-Schwartz's Inequality] $|\left\langle f_{\w}; \w \in \{0,1\}^d \right\rangle| \leq  \displaystyle{ \prod_{\w_{[d]}} } \left\|f_{\w}\right\|_{\Box^d_{\nu}}.$
\end{thm}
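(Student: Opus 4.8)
The plan is to follow the standard inductive proof of the Gowers--Cauchy--Schwarz inequality, applying the Cauchy--Schwarz inequality once in each of the $d$ coordinate directions, while carefully tracking the weight factors $\nu_I(P_{\w_I}(\x_I,\y_I))$. The key structural observation that makes this go through is that, for each coordinate $j\in[d]$, the weight factors split into two groups: those indexed by sets $I$ with $j\notin I$, which do not depend on the $j$-th coordinate at all (so they are ``constant'' with respect to the Cauchy--Schwarz in direction $j$), and those with $j\in I$, which come in the projected pairs $P_{\w_I}(\x_I,\y_I)$ with $\w_I(j)=0$ or $1$. Because the measure $d\mu_{\X_{[d]}\times\Y_{[d]}}$ is itself a product over $\w$-factors of the $\nu_I$'s, the inner product has exactly the multiplicative shape needed for Cauchy--Schwarz to apply cleanly.

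Concretely, I would argue by induction on the number of directions already ``separated''. Write the inner product as $\E_{x_d,y_d}$ of an expression that is a product over the two values $\om_d\in\{0,1\}$; the factor for $\om_d=0$ involves only $x_d$ (and the $2^{d-1}$ functions $f_{\w'0}$ together with the weights $\nu_I$ having $d\notin I$ evaluated at $x_d$, and those with $d\in I$ at the $P$-projection using $x_d$), and similarly the $\om_d=1$ factor involves only $y_d$. Pulling the $\om_d$-constant weights $\prod_{I:\,d\notin I}\nu_I$ out and grouping, the expression is $\E_{x_d}G(x_d)\cdot\E_{y_d}H(y_d)$ against a nonnegative weight; Cauchy--Schwarz in $(x_d,y_d)$ — more precisely, first duplicating one variable and then applying Cauchy--Schwarz with respect to the nonnegative weight built from the $d\notin I$ factors — bounds $|\langle f_\w\rangle_{\Box_\nu}|$ by the geometric mean of two inner products of the same type in which the functions no longer depend on whether $\om_d=0$ or $1$. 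Iterating over $j=d,d-1,\dots,1$, after $d$ steps every function has been replaced by a single $f_{\w^*}$ for a fixed $\w^*$, and the resulting inner product is exactly $\|f_{\w^*}\|_{\Box_\nu}^{2^d}$; multiplying the $2^d$ geometric-mean factors collected along the way yields $\prod_{\w}\|f_\w\|_{\Box_\nu}$.

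I would organize the induction so that at stage $k$ one has a bound of the form $|\langle f_\w\rangle|\le\prod_{\w}\langle\cdots\rangle^{1/2^k}$ where the inner products on the right involve functions depending only on the last $d-k$ coordinates of $\w$; the base case $k=0$ is trivial and the case $k=d$ is the theorem. The one point demanding genuine care — and the step I expect to be the main obstacle — is verifying that after each Cauchy--Schwarz application the \emph{weight structure} of the two resulting inner products is again of exactly the box-norm form appearing in Definition 2.3, i.e.\ that the $\nu_I$ with $d\in I$ reassemble into legitimate weight factors over the reduced cube $\{0,1\}^{d-1}$ and that the nonnegative weight used for Cauchy--Schwarz (a product of $\nu_I$'s, hence $\ge 0$ since $\nu\ge 0$) is correctly absorbed. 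This is a bookkeeping issue about how the index sets $I\subseteq[d]$ partition according to membership of the active coordinate, and once the notation is set up it is routine; no quantitative input about $\nu$ beyond nonnegativity is needed.
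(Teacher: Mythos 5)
Your proposal is correct and follows essentially the same route as the paper: iterated Cauchy--Schwarz, one coordinate direction at a time, with the weight factors split according to whether the active coordinate $j$ lies in the index set $I$ (so that the $j\notin I$ weights serve as the nonnegative weight for Cauchy--Schwarz while the $j\in I$ weights get ``squared'' into full $\w_I$-pairs). The one phrasing worth tightening: you say ``Cauchy--Schwarz in $(x_d,y_d)$,'' but the Cauchy--Schwarz is actually applied in the remaining variables $(\x_{[d-1]},\y_{[d-1]})$ with the $\{I:d\notin I\}$ weight, and the duplication $(\E_{x_d}G)^2=\E_{x_d,y_d}G(x_d)G(y_d)$ is what introduces the second copy of the $d$-th coordinate; the paper makes exactly this move, writing the weight as a product of two square roots before applying Cauchy--Schwarz. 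With that bookkeeping made explicit (which the paper does verify, showing each of the two resulting expressions is again a Gowers inner product in the same weight system), your sketch matches the paper's proof.
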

\begin{proof} We will use Cauchy-Schwartz's inequality and linear form condition. Write
\begin{align*}
\left\langle f_{\omega} ; \omega \in \{0,1\}^d \right\rangle_{\Box_{\nu}^d}
&=\E_{\x_{[2,d]},\y_{[2,d]}}\bigg[ \bigg( \prod_{|I|<d,1 \notin I} \prod_{\w_I} \nu_I(P_{\w_I}(\x_I,\y_I))   \bigg)^{1/2} \\
& \bigg( \E_{x_1} \nu(x_1) \prod_{\w_{[2,d]}} f_{\w_{(0,[2,d])}}(x_1,P_{\w_{[2,d]}}(\x_{[2,d]},\y_{[2,d]}))\prod_{|I|<d-1,1 \notin I}\nu_{\{1\} \cup I}(x_1,P_{\w_I}(\x_I,\y_I))\bigg) \\
&\times \bigg( \prod_{|I|<d,1 \notin I} \prod_{\w_I} \nu_I(P_{\w_I}(\x_I,\y_I))   \bigg)^{1/2} \\
& \bigg( \E_{y_1} \nu(y_1) \prod_{\w_{[2,d]}} f_{\w_{(1,[2,d])}}(y_1,P_{\w_{[2,d]}}(\x_{[2,d]},\y_{[2,d]}))\prod_{|I|<d-1,1 \notin I}\nu_{\{1\} \cup I}(y_1,P_{\w_I}(\x_I,\y_I))\bigg)\bigg]
\end{align*}
Applying the Cauchy Schwartz inequality in the $\x_{[2,d]},\y_{[2,d]}$ variables, one has
\[
|\langle f_{\w} ; \w \in \{0,1\}^d \rangle_{\Box_{\nu}^d} |^2 \leq A \cdot B
\]
here,
\begin{align*}
A &=\E_{\x_{[2,d]},\y_{[2,d]}} \bigg[\prod_{|I|<d,1 \notin I} \prod_{\w_I} \nu_I(P_{\w_I}(\x_I,\y_I))   \ \\
&\times \bigg( \E_{x_1,y_1}\nu(x_1)\nu(y_1) \prod_{\w_{[2,d]}} f_{\w_{(0,[2,d])}}(x_1,P_{\w_{[2,d]}}(\x_{[2,d]},\y_{[2,d]}))f_{\w_{(0,[2,d])}}(y_1,P_{\w_{[2,d]}}(\x_{[2,d]},\y_{[2,d]}))  \\
&\times \prod_{|I|<d-1,1 \notin I}\prod_{\w_I}\nu_{\{1\} \cup I}(x_1,P_{\w_I}(\x_I,\y_I))\nu_{\{1\} \cup I}(y_1,P_{\w_I}(\x_I,\y_I))\bigg)  \bigg] \\
&=\left\langle f^{(0)}_{\w}(P_{\w}(\x_{[d]},\y_{[d]})) \right\rangle_{\Box^d_{\nu}}
\end{align*}
where
$
f^{(0)}_{\tilde{\w}}=f_{(0, \tilde{\w} \cap [2,d])}
$
for any $\tilde{\w}_{[1,d]}$. And,
\begin{align*}
B &=\E_{\x_{[2,d]},\y_{[2,d]}} \bigg[\prod_{|I|<d,1 \notin I} \prod_{\w_I} \nu_I(P_{\w_I}(\x_I,\y_I))   \ \\
&\times \bigg( \E_{x_1,y_1}\nu(x_1)\nu(y_1) \prod_{\w_{[2,d]}} f_{\w_{(1,[2,d])}}(x_1,P_{\w_{[2,d]}}(\x_{[2,d]},\y_{[2,d]}))f_{\w_{(1,[2,d])}}(y_1,P_{\w_{[2,d]}}(\x_{[2,d]},\y_{[2,d]}))  \\
&\times \prod_{|I|<d,1 \notin I}\prod_{\w_I}\nu_{\{1\} \cup I}(x_1,P_{\w_I}(\x_I,\y_I))\nu_{\{1\} \cup I}(y_1,P_{\w_I}(\x_I,\y_I))\bigg)  \bigg] \\
&=\left\langle f^{(1)}_{\w}(P_{\w}(\x_{[d]},\y_{[d]})) \right\rangle_{\Box^d_{\nu}}
\end{align*}
where
$
f^{(1)}_{\tilde{\w}}=f_{(1, \tilde{\w} \cap [2,d])}
$for any $\tilde{\w}_{[1,d]}.$ \\
In the same way, we apply Cauchy-Schwartz's inequality in $(\x_{[3,d]},\y_{[3,d]})$ variables to end up with
\[
|\langle  f_{\w}; \w \in \{0,1\}^d \rangle_{\Box^d_{\nu}}|^4 \leq
\prod_{\w_{[0,1]}}\langle  f^{\w_{[1,2]}}_{\w}; \w \in \{0,1\}^d \rangle_{\Box^d_{\nu}}
\]
In the same way, applying Cauchy-Schwartz's inequality consecutively in $(\x_{[4,d]},\y_{[4,d]}),...,(\x_{[d,d]},\y_{[d,d]})$ variables, we end up with
\begin{align*}
|\langle  f_{\w}; \w \in \{0,1\}^d \rangle_{\Box^d_{\nu}}|^{2^d} &\leq
\prod_{\w_{[d]}}\left\langle f^{\w},...,f^{\w} \right\rangle_{\Box^d_{\nu}}, f^{\w}=f_{\w} \\
&\leq \prod_{\w_{[d]}} \left\| f_{\w} \right\|^{2^d}_{\Box^d_{\nu}}
\end{align*}
\end{proof}
\begin{cor}
$\left\| \cdot \right\|_{{\Box^d_{\nu}}}$ is a norm for $N$ is sufficiently large.
\end{cor}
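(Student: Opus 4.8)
The plan is to deduce that $\|\cdot\|_{\Box^d_\nu}$ is a norm on real-valued functions on $\Z_N^d$ from the Gowers--Cauchy--Schwarz inequality (Theorem 2.4) together with the linear forms condition on $\nu$, exactly as in the classical (unweighted) setting. The four properties to verify are: (i) $\|f\|_{\Box^d_\nu}$ is a well-defined nonnegative real number (i.e.\ the $2^d$-th power on the right is nonnegative), (ii) homogeneity $\|cf\|_{\Box^d_\nu}=|c|\,\|f\|_{\Box^d_\nu}$, (iii) the triangle inequality, and (iv) positive-definiteness $\|f\|_{\Box^d_\nu}=0\Rightarrow f=0$. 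Property (ii) is immediate from multilinearity of the Gowers inner product, since $f$ appears in exactly $2^d$ slots. For (iii), the standard argument applies: expanding $\|f+g\|_{\Box^d_\nu}^{2^d}=\langle f+g,\dots,f+g\rangle_{\Box_\nu}$ by multilinearity into $2^{2^d}$ terms, bounding each mixed term $\langle h_\w;\w\rangle_{\Box_\nu}$ with $h_\w\in\{f,g\}$ by $\prod_\w\|h_\w\|_{\Box^d_\nu}$ via Theorem 2.4, and recognizing the result as $(\|f\|_{\Box^d_\nu}+\|g\|_{\Box^d_\nu})^{2^d}$; taking $2^d$-th roots gives the triangle inequality, provided we already know each side is a genuine nonnegative real, i.e.\ provided (i) holds.

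For (i), I would argue that $\|f\|_{\Box^d_\nu}^{2^d}\ge 0$ by a telescoping Cauchy--Schwarz computation identical to the proof of Theorem 2.4 but applied to a single function $f_\w\equiv f$: each application of Cauchy--Schwarz in the variables $(\x_{[k,d]},\y_{[k,d]})$ passes to an expression of the same shape with the relevant function symmetrized, and after $d$ steps one reaches a quantity that is manifestly a sum of squares times products of $\nu$-weights, hence nonnegative (using $\nu\ge 0$). Alternatively, and more cleanly, one writes $\|f\|_{\Box^d_\nu}^{2^d}=\langle f,\mathcal Df\rangle_\nu$ and runs the Cauchy--Schwarz iteration once to see it dominates a genuine square; but the simplest route is to note the box-norm expression is itself invariant under swapping $x_j\leftrightarrow y_j$ for each $j$ and this symmetry forces the expansion into a sum of squares. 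This is where the linear forms condition enters only implicitly — it is not needed for nonnegativity, but it \emph{is} needed to guarantee the quantities are finite and non-trivial (the weighted counts do not blow up), which is what the phrase ``for $N$ sufficiently large'' refers to.

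The real content, and the main obstacle, is property (iv): positive-definiteness. Here one cannot argue purely formally because of the weights. The standard trick is to show that the \emph{unweighted} box norm is comparable, on the relevant scale, to the weighted one, or more precisely to show directly that $\|f\|_{\Box^d_\nu}=0$ forces $f\equiv 0$ by testing against the constant function $\1$. One would compute, using the Gowers--Cauchy--Schwarz inequality with $f_{\underline 0}=f$ and $f_\w=\1$ for $\w\ne\underline 0$,
\[
\Big|\E_{\x}f(\x)\prod_{|I|<d}\nu_I(\x_I)\,\mathcal D\1(\x)\Big|=|\langle f,\1,\dots,\1\rangle_{\Box_\nu}|\le\|f\|_{\Box^d_\nu}\cdot\|\1\|_{\Box^d_\nu}^{2^d-1},
\]
and then invoke the linear forms condition to show $\|\1\|_{\Box^d_\nu}^{2^d}=1+o(1)$ and that the left-hand side, after a further Cauchy--Schwarz and another application of the linear forms condition to replace the $\nu$-weights by $1$, controls $|\E_\x f(\x)|$ up to $o(1)$ errors; iterating over linear substitutions (or over dyadic level sets of $f$) then pins down $f$. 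I expect the bookkeeping of which linear forms appear — and checking they satisfy the pairwise-independence hypothesis built into Definition 2.1 so that the Green--Tao linear forms condition is applicable — to be the delicate part, though for the special weight system attached to corners this was already noted to hold. In the write-up I would state (iv) as the consequence ``if $\|f\|_{\Box^d_\nu}=0$ then $f=0$'' and give the short argument via Gowers--Cauchy--Schwarz plus $\|\1\|_{\Box^d_\nu}=1+o(1)$, deferring the linear-forms verification to the section where $\nu$'s properties are catalogued.
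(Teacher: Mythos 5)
Your treatment of nonnegativity, homogeneity, and the triangle inequality agrees with the paper's proof: nonnegativity follows because after a single Cauchy--Schwarz in the $(x_1,y_1)$ variables the box-norm integrand becomes a $\nu$-weighted average of squares; homogeneity is immediate from multilinearity; and the triangle inequality follows from the multilinear expansion of $\langle f+g,\dots,f+g\rangle_{\Box_\nu}$ together with the Gowers--Cauchy--Schwarz inequality. Your direct symmetry argument for $\langle f,\dots,f\rangle_{\Box_\nu}\ge 0$ is arguably cleaner than the paper's invocation of $|\langle f,\1,\dots,\1\rangle_{\Box_\nu}|$, which presupposes that $\|f\|_{\Box^d_\nu}$ is already a well-defined nonnegative real.

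The divergence is property (iv), positive-definiteness, which you single out as the ``real content and main obstacle.'' The paper's proof never addresses it: here, as is common in this literature, ``norm'' really means ``seminorm,'' and a seminorm is all that the subsequent arguments (the weighted von Neumann inequality, the dual function estimate, the transference principle, the removal lemma) ever use. Positive-definiteness in fact fails for the measures at hand: $\nu=\nu_{\delta_1,\delta_2}$ vanishes identically off $[\delta_1 N,\delta_2 N]$, and since the one-edge weights $\nu_j(x_j)=\nu(x_j)$ for $j\le d$ appear as factors of the box-norm integrand, any $f$ supported on a slice where some $\nu(x_j)=0$ satisfies $\|f\|_{\Box^d_\nu}=0$ without being zero --- no amount of increasing $N$ fixes this. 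Independently, the sketch you give for (iv) has its own gap: from $\|f\|_{\Box^d_\nu}=0$, Gowers--Cauchy--Schwarz against $\1,\dots,\1$ only gives that a single $\nu$-weighted average of $f$ (essentially $\E_{\x}f(\x)$, up to $o(1)$) vanishes, and ``iterating over linear substitutions or dyadic level sets'' is not spelled out and would not upgrade this to $f\equiv 0$. You should drop (iv) entirely; the paper's proof is complete without it.
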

\begin{proof}
First we show nonnegativity. By the linear forms condition,
$
\|1\|_{\Box_{\nu}}=1+o(1).
$
Hence by the Gowers-Cauchy-Schwartz inequality, we have
$
\| f \|_{\Box_{\nu}^{d}} \gtrsim |\langle  f,1,...,1 \rangle_{\Box_{\nu}^{d}}| \geq 0
$
for all sufficiently large $N$.
 Now
\begin{align*}
\left\| f+g \right\|_{\Box_{\nu}^d} &= \left\langle  f+g,...,f+g \right\rangle_{\Box_{\nu}^d} \\
&=\sum_{\w \in \{0,1\}^d}\left\langle h^{\omega_1},...,h^{\omega_d} \right\rangle_{\Box_{\nu}^d}, h^{\omega}=\begin{cases} f  &,\omega=0 \\ g &,\omega=1 \end{cases}\\
&\leq \sum_{\w \in \{0,1\}^d} \left\| h^{\omega_1} \right\|_{\Box_{\nu}^d}...\left\| h^{\omega_d} \right\|_{\Box_{\nu}^d} =(\left\| f \right\|_{\Box_{\nu}^d}+\left\| g \right\|_{\Box_{\nu}^d})^{2^d}
\end{align*}
Also it follows directly from the definition that $\left\|\lambda f\right\|_{\Box_{\nu}^d}^{2^d}=\lambda^{2^d}\left\| f \right\|_{\Box_{\nu}^d}^{2^d}$. Since the norm are nonnegative, we have $\left\| \lambda f \right\|_{\Box_{\nu}^d}=|\lambda|\left\|f \right\|_{\Box_{\nu}^d}.$
\end{proof}

\subsection{Weighted generalized von-Neumann inequality}
The generalized von-Neumann inequality says that the average $\Lambda:=\Lambda_{d+1,\nu}(f^I,I \subseteq [d+1],|I|=d)$(see \eqref{Lambda}) is controlled by the weighted box norm.  We show this inequality in the general settings of an independent weight system.
\begin{thm}[Weighted generalized von-Neumann inequality]
Let $I \subseteq [d+1], |I|=d, f^I: \X_I \rightarrow \R.$ Let $\underline{\nu}$ be an independent system of measure on $\X_{[d+1]}$ that satisfies linear form conditions. Suppose $f^I$ are dominated by $\underline{\nu}$ i.e. $|f^I| \leq \nu_I$ then
\[
|\Lambda_{d+1,\nu}(f^{(1)},...,f^{(d+1)})| \lesssim \min\{\|f^{(1)}\|_{\Box_{\nu}^d},...,\|f^{(d+1)}\|_{\Box_{\nu}^d}\}
\]
\end{thm}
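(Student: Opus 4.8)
The plan is to mimic the classical proof of the generalized von Neumann inequality (as in Green--Tao), replacing the Cauchy--Schwarz steps that trade the functions $f^I$ for box norms by the weighted versions, and using the linear forms condition to control the spill-over weights at each stage. By symmetry it suffices to prove the bound with $\|f^{(d+1)}\|_{\Box_\nu^d}$ on the right, i.e. to control $\Lambda$ by the box norm of the one function $f^{[d]}=\1_A$ attached to the hyperedge $[d]$ carrying no weight; the other cases are handled identically after relabelling (and in those cases the chosen $f^I$ does carry a weight $\nu_I$, but since $|f^I|\le\nu_I$ the argument still goes through, as explained below). Recall $\Lambda = \E_{\x_{[d+1]}}\prod_{|I|=d} f^I \prod_{|I|<d}\nu_I(\x_I)$ where the variable $x_{d+1}$ appears only through the $d$ forms $L_I$ with $d+1\in I$.

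First I would integrate out $x_{d+1}$: write $\Lambda$ as an average over $\x_{[d]}$ of a product over the $d$ ``faces'' $I=[d+1]\setminus\{j\}$, each of which, after the substitution inherent in the definition of $f^I$, becomes a function of $\x_{[d]}$ that is independent of the coordinate $x_j$. Thus $\Lambda$ has exactly the shape of a ``corner-counting'' or cube-counting average: a product of $d$ functions $g_j(\x_{[d]}^{(j)})$, where $g_j$ does not depend on $x_j$, weighted by the lower-order $\nu_I$'s. To extract the box norm of the remaining $f^{[d]}$ one performs $d$ successive Cauchy--Schwarz steps, one in each variable $x_j$, $j=1,\dots,d$: at the $j$-th step one duplicates the variable $x_j$ into $x_j,y_j$, pulls the factor $g_j$ (which is $x_j$-free) outside the Cauchy--Schwarz in $x_j$, and squares the rest. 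After all $d$ steps the function $g_j$ has been eliminated for every $j$, and $f^{[d]}$ has been replaced by a product over $\w\in\{0,1\}^d$ of copies $f^{[d]}(P_{\w}(\x_{[d]},\y_{[d]}))$, together with a product of weights of the form $\nu_I(P_{\w_I}(\x_I,\y_I))$ for $|I|<d$ — which is precisely $\|f^{[d]}\|_{\Box_\nu^d}^{2^d}$ by the definition of the weighted box norm, up to an error.

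The error terms are where the real work lies. At each Cauchy--Schwarz step the ``other side'' of the inequality produces not the clean expression one wants but that expression multiplied by an extra average of weights $\nu_I$ over the duplicated variables; one must show that this extra average is $1+o(1)$. This is exactly a linear forms condition estimate: the relevant family of linear forms $\{L_I\}$ together with their ``doubled'' versions obtained by replacing $x_j$ by $y_j$ remain pairwise linearly independent (this is the content of Definition 2.1 and the remark following it, that the forms depend on the correct variable sets), so the hypothesis that $\nu$ satisfies linear forms conditions of sufficiently high complexity gives that each spilled-over weight average is $1+o(1)$. The case where the distinguished function $f^I$ with $i\in I$ does carry a weight requires one extra observation: when we bound $|f^I|\le\nu_I$ and absorb $\nu_I$ into the collection of weights, the enlarged weight family is still a valid independent weight system, so the same linear forms estimates apply. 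The main obstacle, and the point requiring the most care, is the bookkeeping in the inductive Cauchy--Schwarz: tracking exactly which weights $\nu_I$ get duplicated at which step, verifying at every stage that the resulting (possibly large) system of linear forms is pairwise linearly independent with complexity bounded uniformly in $N$, so that a fixed finite linear forms condition on $\nu$ suffices, and confirming that the final product of weights matches the definition of $\|\cdot\|_{\Box_\nu^d}$ on the nose rather than up to some residual weight factor. Once the linear-forms accounting is in place, the inequality follows by taking $2^d$-th roots.
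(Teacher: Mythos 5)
Your proof strategy — iterative Cauchy--Schwarz with the linear forms condition controlling the spilled-over weight averages at each stage — is exactly the paper's argument for Theorem 2.2; keeping $f^{(d+1)}$ on the unweighted hyperedge $[d]$ rather than $f^{(1)}$ on $[2,d+1]$ is an inessential symmetric choice within the independent weight system framework. Two points in your sketch are off, though: you cannot ``integrate out $x_{d+1}$'' at the start and be left with a product of functions of $\x_{[d]}$, since the faces $f^I$ with $d+1\in I$ genuinely depend on $x_{d+1}$; and after the $d$ Cauchy--Schwarz steps one variable always remains undoubled and survives inside a residual weight factor $W$ multiplying the putative box norm, and showing this contributes only $o(1)$ requires a further Cauchy--Schwarz plus linear-forms estimates for both $W$ and $W^2$ (the paper's treatment of the error term $E$), not merely the per-step ``spill-over is $1+o(1)$'' estimates your description accounts for.
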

\begin{proof}
We will only use Cauchy-Schwartz inequality and the linear forms condition. The idea is to consider one of the variables say $x_j$, as a dummy variable and write
\[
\Lambda:=\E_{x_j}(...)\E_{\x_{[d+1]\backslash \{j\}}}(...)
\]
then apply Cauchy Schwartz's inequality to eliminate the lower complexity factors and use linear forms condition to control the extra factor gained. We do this repeatedly $d$ times.
\par First apply Cauchy-Schwartz's inequality in $x_{d+1}$ variable to eliminate $f^{(d+1)}$
\begin{align*}
|\Lambda| &\leq \E_{\x_{[d]}}f^{(d+1)}(\x_{[d]})\prod_{|I|<d,d+1 \notin I}\nu_I(\x_I)\bigg|\E_{x_{d+1}}\prod_{i=1}^d f^{(i)}(\x_{[d+1]\backslash \{i\}})\prod_{|I|< d,d+1 \in I}\nu_I(\x_I) \bigg| \\
&\leq \E_{\x_{[d]}}\bigg(\nu_{[d]}(\x_{[d]})\prod_{|I|<d,d+1 \notin I}\nu_I(\x_I) \bigg)^{1/2}\bigg(\nu_{[d]}(\x_{[d]})\prod_{|I|< d,d+1 \notin I}\nu_I(\x_I) \bigg)^{1/2}
\times \bigg|\E_{x_{d+1}} \prod_{i=1}^d f^{(i)} \prod_{|I|<d,d+1 \in I}\nu_I(\x_I) \bigg|
\end{align*}
Now by 
the linear forms condition (as the linear forms defining an independent weight system  are pairwise linearly independent), we have
\[
\E_{\x_{[d]}}\nu_{[d]}(\x_{[d]}) \prod_{|I|< d,d+1 \notin I}\nu_I(\x_{I})=1+o(1),
\]
hence
\begin{align*}
|\Lambda|^2 &\lesssim \E_{\x_{[d]}} \nu_{[d]}(\x_{[d]})\prod_{|I|<d,d+1 \notin I} \nu_I(\x_I) \\
&\times \E_{x_{d+1},y_{d+1}}\prod_{i=1}^d \prod_{\omega_{d+1} \in \{0,1\}}f^{(i)}(\x_{[d] \backslash \{i\}},P_{\omega_{d+1}}(x_{d+1},y_{d+1})) \prod_{\substack{|I|<d \\ d+1 \in I}} \prod_{\omega_{d+1} \in \{0,1\}}\nu_{I}( \x_{I \backslash \{d+1\}},P_{\omega_{d+1}}(x_{d+1},y_{d+1}))
\end{align*}
Next we want to eliminate $f^{(d)}(\x_{[d+1] \backslash \{ d\}}) \leq \nu_{[d+1] \backslash \{ d\}}(\x_{[d+1] \backslash \{ d\}})$. Write
\begin{align*}
|\Lambda|^2 &\lesssim \E_{\x_{[d+1]\backslash \{d\}},y_{d+1}} \prod_{\omega_{d+1} \in \{0,1\}}\nu_{[d+1] \backslash \{ d\}}(\x_{[d-1]},P_{\omega_{d+1}}(x_{d+1},y_{d+1})) \prod_{|I|< d, d \notin I} \prod_{\w_{\{d+1\} \cap I}}\nu_I(P_{\w_{I \cap \{d+1\}}}(\x_I,\y_I)) \\
&\times \prod_{\substack{|I|<d \\ d,d+1 \notin I}}\nu_I(\x_I) \E_{x_d}\prod_{i=1}^{d-1} f^{(i)}(\x_{[d] \backslash \{i\}},P_{\omega_{d+1}}(x_{d+1},y_{d+1})) \prod_{|I|<d,d \in I} \prod_{\w_{I \cap \{d+1\}}} \nu_I(P_{\w_{I \cap \{d+1\}}}(\x_I,\y_I))\cdot \nu_{[d]}(\x_{[d]})
\end{align*}
Again, by the 
linear forms condition on the face $P_{\w_{d+1}}(\X_{[d+1]\backslash \{d\}},\Y_{[d+1]\backslash \{d\}})$,
\[
\E_{\x_{[d+1]\backslash \{d\}},y_{d+1}} \prod_{\omega_{d+1} \in \{0,1\}}\nu_{[d+1]\backslash \{d\}}(\x_{[d-1]},P_{\omega_{d+1}}(x_{d+1},y_{d+1})) \prod_{|I|<d, d \notin I} \prod_{\w_{\{d+1\} \cap I}}\nu_I(P_{\w_{I \cap \{d+1\}}}(\x_I,\y_I))
\prod_{\substack{|I|< d\\ d,d+1 \notin I}} \nu_I(\x_I)
\]
which is $O(1)$ and hence
\begin{align*}
|\Lambda|^4 \lesssim &\E_{\x_{[d-2]},x_d,y_d,x_{d+1},y_{d+1}} \prod_{\w_{[d,d+1]}} \nu_{[d,d+1]}(P_{\w_{[d,d+1]}}(\x_{[d+1]\backslash \{d-1 \}},\y_{[d+1]\backslash \{d-1 \}})) \\
&\times \prod_{|I| \leq d, d-1 \notin I} \prod_{\w_{[d,d+1]\cap I}} \nu_I (P_{\w_{[d,d+1] \cap I}}(\x_I,\y_I)) \\
&\times \E_{x_{d-1}}\prod_{i=1}^{d-2}\prod_{\w_{[d,d+1]}}f^{(i)}(P_{\w_{[d,d+1]}}(\x_I,\y_I))\prod_{\substack{|I|\leq d \\ d-1 \in I}}\prod_{\w_{[d,d+1] \cap I}}\nu_I(P_{\w_{I \cap [d,d+1]} }(\x_I,\y_I)) \\
&\times \prod_{\omega_d} \nu_{[d]}(P_{\omega_d}(\x_{[d]},\y_{[d]}))\prod_{\omega_{d+1}} \nu_{[d+1] \backslash \{ d\}}(P_{\omega_{d+1}}(\x_{[d+1] \backslash \{d\}},\y_{[d+1] \backslash \{d\}}))
\end{align*}
Continue using Cauchy-Schwartz inequality in $x_{d-1},...,x_2$ in a similar fashion for
\[
\E_{\x_{[d+1] \backslash \{r\}},\y_{[d+1] \backslash \{r\}}}\prod_{\w_{[r+1,d+1]}}\nu_{[r+1,d+1]}(P_{\w_{[r+1,d+1]}}(\x_{[d+1] \backslash \{ r\}},\y_{[d+1] \backslash \{ r\}})) \prod_{\substack{|I|\leq d \\ r \notin I}}\prod_{\w_{[r+1,d+1] \cap I}}\nu_I(P_{\omega_{[r+1,d+1] \cap I}}(\x_I,\y_I))
\]
(which is $O(1)$ by linear forms are on all faces $P_{\w_{[r+1,d+1]}}(\X_{[d+1]\backslash \{d\}},\Y_{[d+1]\backslash \{d\}})$.) eventually we obtain
\begin{align}
|\Lambda|^{2^d} &\lesssim \E_{\x_{[2,d+1]},\y_{[2,d+1]}}\prod_{\w_{[2,d+1]}}f^{(1)}(P_{\w_{[2,d+1]}}(\x_{[2,d+1]},\y_{[2,d+1]})) \prod_{|I|<d, 1 \notin I} \prod_{\w_{[2,d+1] \cap I}}\nu_I(P_{\w_{[2,d+1] \cap I}}(\x_I,\y_I)) \notag \\
&\times W(P_{\w_{[2,d+1]}}(\x_{[2,d+1]},\y_{[2,d+1]})) \label{Equation200}
\end{align}
where
\begin{align*}
W:=W(P_{\w_{[2,d+1]}}(\x_{[2,d+1]},\y_{[2,d+1]})) &:= \E_{x_1}\prod_{|I|<d}\prod_{\w_{[2,d+1] \cap I}}\nu_I(x_1,P_{\w_{[2,d+1] \cap I}}(\x_{I \backslash \{1\}},\y_{I \backslash \{1\}})) \\
&\times \prod_{k=2}^{d+1} \prod_{\w_{[2,d+1]\backslash \{k\}}}\nu_{[d+1] \backslash \{k\}}(P_{\w_{[2,d+1]}}(\x_{[d+1] \backslash \{k\}},\y_{[d+1] \backslash \{k\}}))
\end{align*}
Write the RHS of \eqref{Equation200}
$=\left\| f^{(1)} \right\|^{2^d}_{\Box^d_{\nu}}+E$

where
\[
|E| \leq \E_{\x_{[2,d+1]},\y_{[2,d+1]}} \prod_{\w_{[2,d+1]}}f^{(1)}(P_{\w_{[2,d+1]}}(\x_{[2,d+1]},\y_{[2,d+1]}))
\prod_{|I|<d,1 \notin I} \prod_{\w_I} \nu_I(P_{\w_I}(\x_I,\y_I)) \times |W-1|
\]
We wish to show that $E=o(1)$. Now,
\begin{align*}
|E|^2 &\leq \E_{\x_{[2,d+1]},\y_{[2,d+1]}} \prod_{\w_{[2,d+1]}}\nu_{[2,d+1]}(P_{\w_{[2,d+1]}}(\x_{[2,d+1]},\y_{[2,d+1]}))\prod_{|I|<d,1 \notin I}\prod_{\w_I}\nu_I(P_{\w_I}(\x_I,\y_I)) \\
&\times \E_{\x_{[2,d+1]},\y_{[2,d+1]}} \prod_{\w_{[2,d+1]}}\nu_{[2,d+1]}(P_{\w_{[2,d+1]}}(\x_{[2,d+1]},\y_{[2,d+1]}))\prod_{|I|<d,1 \notin I}\prod_{\w_I}\nu_I(P_{\w_I}(\x_I,\y_I))|W-1|^2
\end{align*}
The term on the first line is $O(1)$ by linear form condition on all the faces $P_{\w_{[2,d+1]}}(\X_{[2,d+1]},\Y_{[2,d+1]})$.
So we just need to show
\begin{equation}\label{Equation201}
\E_{\x_{[2,d+1]},\y_{[2,d+1]}} \prod_{\w_{[2,d+1]}}\nu_{[2,d+1]}(P_{\w_{[2,d+1]}}(\x_{[2,d+1]},\y_{[2,d+1]}))\prod_{|I|<d,1 \notin I}\prod_{\w_I}\nu_I(P_{\w_I}(\x_I,\y_I))W =1+o(1)
\end{equation}

\begin{equation}\label{Equation202}
\E_{\x_{[2,d+1]},\y_{[2,d+1]}} \prod_{\w_{[2,d+1]}}\nu_{[2,d+1]}(P_{\w_{[2,d+1]}}(\x_{[2,d+1]},\y_{[2,d+1]}))\prod_{|I|<d,1 \notin I}\prod_{\w_I}\nu_I(P_{\w_I}(\x_I,\y_I))W^2 =1+o(1)
\end{equation}

\eqref{Equation201} follows from linear form conditions on the faces $\X_{[d+1]} \times \Y_{[2,d+1]}$. \eqref{Equation202} follows from linear form conditions on the faces $\X_{[d+1]} \times \Y_{[d+1]}$ and we are done.

\end{proof}



\section{The dual function estimate.}\numberwithin{equation}{section}
In this section we prove
\begin{thm}
For any independent measure system and any fixed $J \subseteq [d+1],|J|=d$, let $F_1,...,F_K:\X_J \rightarrow \R,\ F_j(\x_J) \leq \nu_J(\x_J)$ be given functions. Then for each $1 \leq j \leq K$ we have that
\[
\big\| \prod_{j=1}^K \mathcal{D}F_j \big\|^*_{\Box^d_{\nu}}=O_K(1)
\]
\end{thm}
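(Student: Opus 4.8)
Throughout, relabel the coordinates so that $J=[d]$, so that $\mathcal D$, $\langle\cdot,\cdot\rangle_\nu$ and the norms $\|\cdot\|_{\Box^d_\nu}$, $\|\cdot\|^*_{\Box^d_\nu}$ are literally the objects of Section~2; by definition of the dual norm it is then enough to bound $|\langle g,\prod_{j=1}^{K}\mathcal D F_j\rangle_\nu|$ uniformly over all $g\colon\Z_N^d\to\R$ with $\|g\|_{\Box^d_\nu}\le1$. The basic mechanism is already visible for $K=1$: unwinding the definitions of $\langle\cdot,\cdot\rangle_\nu$ and of $\mathcal D$, and noticing that the weight factor $\prod_{|I|<d}\nu_I(\x_I)$ hidden in $\langle\cdot,\cdot\rangle_\nu$ together with the weights hidden inside $\mathcal D F_1$ reassemble to $\prod_{|I|<d}\prod_{\w_I}\nu_I(P_{\w_I}(\x_I,\y_I))$, one gets the identity
\[
\langle g,\mathcal D F_1\rangle_\nu=\langle g,F_1,\dots,F_1\rangle_{\Box^d_\nu}
\]
(the Gowers inner product with $g$ in the $\0$-slot and $F_1$ in the other $2^{d}-1$ slots). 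The Gowers--Cauchy--Schwarz inequality then bounds this by $\|g\|_{\Box^d_\nu}\,\|F_1\|_{\Box^d_\nu}^{\,2^d-1}$, and $\|F_1\|_{\Box^d_\nu}=O(1)$ because $|F_1|\le\nu_{[d]}$ and the $\nu_I$ are nonnegative, so $\|F_1\|_{\Box^d_\nu}^{2^d}$ is at most a ``pure weight'' average, which the linear forms condition --- applicable since the weight system is \emph{independent} and hence all the forms $L_I^k$ are pairwise linearly independent --- evaluates as $1+o(1)$. Thus $\|\mathcal D F_1\|^*_{\Box^d_\nu}=O(1)$.

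For general $K$ the plan is to peel off the factors $\mathcal D F_j$ one at a time. The same unwinding gives
\[
\Bigl\langle g,\ \prod_{j=1}^{K}\mathcal D F_j\Bigr\rangle_\nu=\Bigl\langle\ g\,\textstyle\prod_{j=2}^{K}\mathcal D F_j,\ F_1,\ \dots,\ F_1\ \Bigr\rangle_{\Box^d_\nu},
\]
so by Gowers--Cauchy--Schwarz and $\|F_1\|_{\Box^d_\nu}=O(1)$ it suffices to prove $\bigl\|g\prod_{j=2}^{K}\mathcal D F_j\bigr\|_{\Box^d_\nu}=O_K(1)$ whenever $\|g\|_{\Box^d_\nu}\le1$. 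To estimate this box norm I would raise it to the power $2^d$ --- using both $\|\cdot\|_{\Box^d_\nu}^{2^d}=\langle\cdot,\mathcal D(\cdot)\rangle_\nu$ and the $2^d$-fold average over $(\x,\y)$ --- and then expand every inner dual function $\mathcal D F_j$ by its definition, introducing one fresh dummy block $\y^{(\w,j)}\in\Z_N^d$ per occurrence. One then bounds $|F_j|\le\nu_{[d]}$ on the $F_j$-factors \emph{only}, so as never to replace $\prod_{\w}g(P_{\w}(\x,\y))$ by its absolute value, re-groups, and recognises what is left as a positive power of $\|g\|_{\Box^d_\nu}\le1$ times a bounded number of ``pure weight'' averages of values of $\nu$ at affine-linear forms, each of which is $1+o(1)$ by the linear forms condition; one iterates the peeling identity as needed to keep cutting down the number of $\mathcal D F_j$ in play. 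Assembling the pieces gives $|\langle g,\prod_j\mathcal D F_j\rangle_\nu|=O_K(1)$, which is the theorem.

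I expect the real obstacle to be this last step. Products of dual functions cannot be controlled by crude estimates: bounding $|\mathcal D F_j|$ pointwise, or using a Cauchy--Schwarz that separates $g$ from the weights, both lose too much and leave an uncontrolled $L^2$-type quantity in $g$. So the iterated Gowers--Cauchy--Schwarz expansion has to be organised carefully, so that (i) the oscillation of $g$ is preserved at every stage, and (ii) the system of affine-linear forms that is eventually fed to the linear forms condition is admissible, i.e.\ no two of its members are rational multiples of one another. Point (ii) is precisely where the hypothesis that $\bnu$ is an \emph{independent} weight system enters --- one must check that this non-degeneracy survives all the substitutions $\x\mapsto P_{\w}(\x,\y)$, $\x\mapsto P_{\w'}(P_{\w}(\x,\y),\y^{(\w,j)})$, \dots produced by the (possibly nested) dual functions, the fresh dummy blocks being what keep the relevant variable-blocks separated. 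Everything else is manipulation of the same kind as in the proofs of the Gowers--Cauchy--Schwarz inequality and of the weighted generalized von Neumann inequality.
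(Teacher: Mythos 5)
Your $K=1$ argument and the algebraic peeling identity
\[
\Big\langle g,\ \prod_{j=1}^{K}\mathcal D F_j\Big\rangle_\nu=\Big\langle\ g\,\textstyle\prod_{j=2}^{K}\mathcal D F_j,\ F_1,\ \dots,\ F_1\ \Big\rangle_{\Box^d_\nu}
\]
are both correct, and you have identified the right tools (Gowers--Cauchy--Schwarz plus the linear forms condition). But the reduction you draw from the peeling identity --- ``it suffices to prove $\|g\prod_{j\ge2}\mathcal D F_j\|_{\Box^d_\nu}=O_K(1)$'' --- is where the argument genuinely breaks. Expanding that box norm makes $g$ appear $2^d$ times rather than once, and the hypothesis $\|g\|_{\Box^d_\nu}\le1$ gives no pointwise or $L^p$ control over $g$ with which to strip off the extra weights coming from the $\mathcal D F_j$ (which are themselves unbounded functions). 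The step you describe --- bounding $|F_j|\le\nu_{[d]}$ ``on the $F_j$-factors only'' while keeping the signs of $\prod_{\w}g(P_{\w}(\x,\y))$ --- is not a legitimate estimate: one cannot apply a one-sided pointwise inequality to some factors of an integrand whose remaining part is not of constant sign. And even if one could, the resulting weight multiplying $\prod_{\w}g(P_{\w}(\x,\y))$ is no longer the $\Box^d_\nu$-weight (it carries the extra $\nu$'s from the inner duals), so there is nothing to ``recognise'' as a power of $\|g\|_{\Box^d_\nu}$ short of another Gowers--Cauchy--Schwarz, which only multiplies the number of $g$-factors. The peeling never closes.

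The paper's proof handles all $K$ dual functions at once via a change of variables that you are missing: writing the $K$ dummy variables of the duals as $\y^1,\dots,\y^K$, it shifts $\y^j\mapsto\y^j+\y$, introduces the (free) average over $\y$, and then packages the whole expression as
\[
\Big\langle g,\ \prod_{j=1}^{K}\mathcal D F_j\Big\rangle_\nu=\E_{\y^1,\dots,\y^K}\big\langle G_{\w,Y};\ \w\in\{0,1\}^d\big\rangle_{\Box^d_\nu},
\]
where $G_{\0,Y}=g$ and, for $\tilde\w\ne\0$, $G_{\tilde\w,Y}$ is a product of shifted $F_j$'s together with an explicit weight correction that makes the $\Box^d_\nu$-weights in the inner product match up exactly. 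A \emph{single} Gowers--Cauchy--Schwarz then bounds this by $\|g\|_{\Box^d_\nu}\prod_{\tilde\w\ne\0}\|G_{\tilde\w,Y}\|_{\Box^d_\nu}$; after an arithmetic--geometric mean step one only has to show $\E_{Y}\|G_{\tilde\w,Y}\|_{\Box^d_\nu}^{2^d}=O_K(1)$ for each fixed $\tilde\w\neq\0$, which (after bounding $F_j\le\nu_{[d]}$, now safely, since $g$ is gone) is a pure weight average controlled by the linear forms condition. The extra averaging over $\y$ is precisely what lets the entire product $\prod_j\mathcal D F_j$ sit inside one Gowers inner product against $g$, so that $g$ appears once and $\|g\|_{\Box^d_\nu}\le1$ can be used; your peeling splits the problem before this structure is created, and so cannot recover it.
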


\begin{proof}
We will denote by $I$ the subsets of a fixed set $J \subseteq [d+1],\ |J|=d$. First, write
\[
\mathcal{D}F_j(\x)=\E_{\y^j \in \Z_N^d}\prod_{\w \neq \underline{0}}F_j(P_{\w}(\x,\y^j))\prod_{|I|<d}\prod_{\w_I \neq \underline{0}}\nu_I(P_{\w_I}(\x_I,\y_I^j))
\]
Now assume $\left\| f \right\|_{\Box^d_{\nu}} \leq 1$ then
\begin{align*}
\big\langle f,\prod_{j=1}^K \mathcal{D}F_j \big\rangle_{\nu} &= \E_{\x \in \Z_N^d} f(\x) \prod_{j=1}^K \mathcal{D}F_j(\x)\prod_{|I|<d}\nu_I(\x_I) \\
&=\E_{\x \in \Z_N^d}f(\x)\E_{\y^1,...\y^K \in \Z_N^d}\prod_{j=1}^K \bigg[ \prod_{\w \neq \underline{0}} F_j(P_{\w}(\x,\y^j))\prod_{|I|<d}\big[\prod_{\w_I \neq \underline{0}} \nu_I(P_{\w_I}(\x_I,\y_I^j)) \big]\nu_I(\x_I)\bigg]
\end{align*}
We will compare this to the box norm to exploit the fact that $\left\| f \right\|_{\Box^d_{\nu}} \leq 1$. To compare this to the Gowers's inner product, let us introduce the following change of variables:
\par
For a fixed $\y \in \Z_N^d$, write $\y^j \mapsto \y^j+\y, 1 \leq j \leq K$ then our expression takes the form
\begin{align*}
\big\langle f, \prod_{j=1}^K \mathcal{D}F_j \big\rangle_{\nu}
&=\E_{\y^1,...,\y^K}\E_{\x}f(\x)\prod_{j=1}^K \bigg[\prod_{\w  \neq \underline{0}}F_j(P_{\w}(\x,\y+\y^j))\prod_{|I|<d}\prod_{\w_I \neq \underline{0}} \bigg[ \nu_I(P_{\w_I}(\x_I,+\y_I^j+\y_I)) \bigg] \nu_I(\x_I)   \bigg] \\
\end{align*}
This is equal to
\[
\E_{\y^1,...,\y^K}\E_{\x,\y} f(\x)\prod_{j=1}^K \bigg[\prod_{\w  \neq \underline{0}}F_j(P_{\w}(\x,\y+\y^j))\prod_{|I|<d}\prod_{\w_I \neq \underline{0}} \bigg[ \nu_I(P_{\w_I}(\x_I,\y_I^j+\y_I)) \bigg] \nu_I(\x_I)   \bigg]
\]
We will define functions $G_{\w,Y}(\x): \Z_N^d \rightarrow \R,\w \in \{0,1\}^d$ such that
\[
\big\langle f,\prod_{j=1}^K\mathcal{D}F_j \big\rangle_{\nu}=\E_{\y^1,..,\y^K}\left\langle G_{\w,Y}; \w \in \{0,1\}^d \right\rangle_{\Box^d_{\nu}}
\]
Now let $G_{\underline{0}}(\x):=f(\x)$ and for each $\tilde{\w} \neq \underline{0}, Y=(\y^1,...\y^K) \in (\Z_N^d)^K$, define
\[
G_{\tilde{\w},Y}(\x):=\prod_{j=1}^K\bigg[F_j(\x+\y^j_{1(\tilde{\w})})\bigg]\bigg( \prod_{|I|<d}\nu_I((\x+\y^j_{1(\tilde{\w})})\big|_I) \bigg)^{\frac{1}{2^{d-|I|}}} \prod_{|I|<d}\nu_I(\x_I)^{-\frac{1}{2^{d-|I|}}}
\]
Hence for $\tilde{\w} \neq \underline{0}$
\[
G_{\tilde{\w},Y}(P_{\tilde{\w}}(\x,\y))=\prod_{j=1}^K \bigg[ F_j(P_{\tilde{\w}}(\x,\y+\y^j))\big( \prod_{|I|<d}\nu_I((P_{\tilde{\w}}(( \x,\y+\y^j)\big|_I \big)^{\frac{1}{2^{d-|I|}}}\bigg] \prod_{|I|<d}\nu_I(P_{\tilde{\w}}(\x,\y)\big|_I)^{-\frac{1}{2^{d-|I|}}}
\]
\begin{remark}
For each $I \subseteq [d]$ and fixed $\w_I$, the number of $\w_{[d]}$ such that $\w_{[d]}|_I=\w_I$ is $2^{d-|I|}$ and
\[
P_{\w}(\x,\y)|_I=P_{\w_I}(\x_I,\y_I) \Longleftrightarrow \w|_I=\w_I
\]
\end{remark}
So from the remark
\begin{align*}
&\left\langle G_{\w,Y}; \w \in \{0,1\}^d \right\rangle_{\Box_{\nu}^d}=\E_{\x,\y \in \Z_N^d}\prod_{\w_{[d]}}G_{\w,Y}(P_{\w}(\x,\y))\prod_{|I|<d}\prod_{\w_I}\nu_I(P_{\w_I}(\x_I,\y_I)) \\
&= \E_{\x,\y \in \Z_N^d} \prod_{\w} \bigg[\prod_{j=1}^K \bigg[ F_j(P_{\w}(\x,\y+\y^j))(\prod_{|I|<d}\nu_I(P_{\w}( (\x,\y)+\y_{1(\w)}^j)\big|_I )^{\frac{1}{2^{d-|I|}}} \bigg]  \\
&\times \prod_{|I|<d}\nu_I(P_{\w}(\x_I,\y_I)|_I)^{-\frac{1}{2^{d-|I|}}} \bigg] \times \prod_{|I|<d}\prod_{\w_I}\nu_I(P_{\w_I}(\x_I,\y_I)) \\
&=\E_{\x,\y \in \Z_N^d}f(\x)\prod_{j=1}^K \prod_{\w \neq \underline{0}}F_j(P_{\w}(\x,\y+\y^j)) \prod_{|I|<d}\bigg[\prod_{j=1}^K\prod_{\w_I \neq \underline{0}} \nu_I(P_{\w_I}(\x_I,\y_I^j+\y_I)) \bigg] \nu_I(\x_I)
\end{align*}Hence we have
\[
\langle f,\prod_{j=1}^K\mathcal{D}F_j \rangle_{\nu}=\E_{\y^1,..,\y^K}\left\langle G_{\w}; \w \in \{0,1\}^d \right\rangle_{\Box^d_{\nu}}
\]
Then by Gowers-Cauchy-Schwartz's inequality, we have
\[
\big|\langle f, \prod_{j=1}^K \mathcal{D}F_j \rangle_{\nu} \big| \leq \left\|f \right\|_{\Box_{\nu}^d}\prod_{\w \neq \underline{0}}\left\|G_{\w,Y} \right\|_{\Box_{\nu}^d} \lesssim 1 +\sum_{\w_{[d]} \neq \underline{0}}\left\| G_{\w,Y}\right\|^{2^d}_{\Box_{\nu}^d}
\]
Hence to prove the dual function estimate, it is enough to show that
\[
\E_{\y^1,...,\y^K}\left\| G_{\tilde{\w},Y} \right\|_{\Box_{\nu}^d}^{2^d}=O_K(1)
\]
For any fixed $\tilde{\w} \neq \underline{0}$. Now
\begin{align*}
\E_{\y^1,...,\y^K}\left\| G_{\tilde{\w},Y} \right\|_{\Box^d_{\nu}}^{2^d}
&=\E_{\y^1,...,\y^K} \E_{\x,\y}\prod_{\w}G_{\tilde{\w},Y}(P_{\w}(\x,\y))\prod_{|I|<d}\prod_{\w_I}\nu_I(P_{\w_I}(\x_I,\y_I))\\
&\leq\E_{\y^1,...,\y^K} \E_{\x,\y} \prod_{\w}\prod_{j=1}^K \bigg[ \nu_{[d]}(P_{\w}(\x,\y)+\y^j_{1(\tilde{\w})})\prod_{|I|<d}\nu_I((P_{\w}(\x,\y)+\y^j_{1(\tilde{\w})})\big|_I)^{\frac{1}{2^{d-|I|}}} \\
&\times \prod_{|I|<d}\nu_I(P_{\w}(\x,\y) \big|_I)^{-\frac{1}{2^{d-|I|}}}\bigg] \prod_{|I|<d}\prod_{\w_I}\nu_I(P_{\w_I}(\x_I,\y_I)) \\
&=\E_{\y^1,...,\y^K} \E_{\x,\y}\prod_{j=1}^K \bigg[\prod_{\w} \nu_{[d]}(P_{\w}((\x,\y)+\y^j_{1(\tilde{\w})}\big|_I)) \prod_{|I|<d}\prod_{\w_I}\nu_I(P_{\w_I}(\x_I,\y_I)+\y^j_{1(\tilde{\w})}\big|_I) \bigg]
\end{align*}
by remark 1 above. As the linear forms appearing in the above expression are pairwise linearly independent  this is $O_K(1)$ (in fact it is $O(1)$ if $N$ is sufficiently large w.r.t. to $K$) by the linear forms condition as required.
\end{proof}

\section{Transference Principle}\numberwithin{equation}{section}
In this section, we will slightly modify the transference principle in \cite{GW2}(see Theorem 4.6)  , which will allow us to deduce results for functions dominated by a pseudo-random measure from the corresponding result on bounded functions. We will do this on the set on which our functions have bounded dual, and treat the contributions of the remaining set as error terms. \par
We will work on functions $f: \X_I \rightarrow \R$, dominated by $\nu_I$. WLOG assume $I=[d]$. Let $\left\langle  \cdot \right\rangle$ be any inner product on $\mathcal{F}:= \{f:\X_{[d]} \rightarrow \R \}$ written as $\left\langle f,g  \right\rangle= \int f \cdot g ~~~~d \mu$ for some measure $\mu$ on $\X_{[d]}.$
 In this section we will need the explicit discription of the set $\Omega(T)$ that the dual function is bounded by $T$ using the correlation condition (see appendix).
\subsection{Dual Boundedness on $\X_I$}
One property of the dual functions that is used in \cite{GT1} is their boundedness. However in the weighted settings, this is generally not true. To get around this, we will be working on sets on which the dual functions are bounded and treat the contributions of the remaining parts as error terms. \par
Consider any independent weight system. Let $I \subseteq [d+1],|I|=d$, $f:\X_I \rightarrow \R, |f| \leq \nu_I$ (WLOG $I=[d] $). Recall
\begin{equation*}
\mathcal{D}(f) = \E_{\y} \prod_{\w \neq \0} \nu(L(P_{\w}(\x,\y)))\prod_{|I|<d}\prod_{\w_I \neq \0}\nu_I(P_{\w_I}(\x_I,\y_I))
\end{equation*}
Write $h_{\w_I}=L^I(\x)|_{0(\w_I)}$ hence using correlation condition (see appendix), we have
\begin{equation}
|\mathcal{D}(f)| \leq \prod_{\emptyset \neq J \subseteq [d]}\sum_{(\w_{I_1},\w_{I_2})\in T_J}\tau(W \cdot (a_{\w_{I_1}}h_{\w_{I_1}}-a_{\w_{I_2}}h_{\w_{I_2}})+(a_{\w_{I_1}}-a_{\w_{I_2}})b)
\end{equation}
\label{Equation401}
where for each $J \subsetneq [d],J \neq \emptyset$
\[
T_J :=\{\{\w_{I_1},\w_{I_2}\}, \w_{I_1},\w_{I_2} \neq \underline{0},\w_{I_1} \neq \w_{I_2},1(\w_{I_1})=1(\w_{I_2})=J :\exists c \in \Q, L^{I_1}(\y_{1(\w_{I_1})})=cL^{I_2}(\y_{1(\w_{I_2})})  \}
\]
where $a_{\w_{I_j}} \in \mathbb{Q}$ are some constants. Define
\begin{equation}
\Omega_J(T)=\{(\x_{[d]}: \sum_{\{\w_{I_1},\w_{I_2}\}\in T_J}\tau(W \cdot (a_{\w_{I_1}}h_{\w_{I_1}}-a_{\w_{I_2}}h_{\w_{I_2}})+(a_{\w_{I_1}}-a_{\w_{I_2}})b)) \leq T^{1/2^d}\}
\end{equation}
\label{Equation402}
\begin{equation}
\Omega(T)= \bigcap_{J \subsetneq [d]} \Omega_J(T)
\end{equation}
So $\mathcal{D}f$ is bounded by $T$ on $\Omega(T)$ for any fixed $T>1$.

\subsection{Transference principle}
\begin{definition}
For each $T>1$ we have the set $\Omega(T)$ and define the following sets
\begin{align*}
\mathcal{F} &:= \{f:\X_{[d]}  \rightarrow \R \} \\
\mathcal{F}_T &:= \{ f \in \mathcal{F} : \text{supp}(f) \subseteq \Omega(T) \} \\
\mathcal{S}_T &:= \{ f \in \mathcal{F}_T : |f| \leq \nu_{[d]}(\x_{[d]})+2 \}
\end{align*}
We define the following (basic anti-correlation) norm on $\mathcal{F}_T$
\[
\left\| f \right\|_{\text{BAC}} := \max_{g \in \mathcal{S}_T }|\left\langle f,\mathcal{D}g \right\rangle|
\]
\end{definition}
We have the following basic properties of this norm.
\begin{prop}$\ $

\begin{enumerate}
    \item $g \in \mathcal{F}_T \Rightarrow \mathcal{D}g \in \mathcal{F}_T$

	\item $\left\| \cdot \right\|_{\text{BAC}}$ is a norm on $\mathcal{F}_T$ and can be extended to be a seminorm on $\mathcal{F}$. Furthermore, we have $\left\| f \right\|_{\text{BAC}}=\left\| f\cdot \1_{\Omega(T)} \right\|_{\text{BAC}}, f \in \mathcal{F}.$

	\item $\text{Span} \{ \mathcal{D}g : g \in \mathcal{S}_T \} =\mathcal{F}_T$

	\item $\left\| f \right\|^*_{\text{BAC}} = \inf\{\sum_{i=1}^k|\lambda_i|,f=\sum_{i=1}^k \lambda_i \mathcal{D}g_i; g_i \in \mathcal{S}_T \} $ for $f \in \mathcal{F}_T$
	\end{enumerate}
	\end{prop}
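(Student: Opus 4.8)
The plan is to verify the four assertions in order, since each relies on the preceding ones. For (1): given $g\in\mathcal{F}_T$, so $\mathrm{supp}(g)\subseteq\Omega(T)$, one must check $\mathcal{D}g$ is also supported in $\Omega(T)$. But this is immediate from the definition of $\mathcal{D}$ only if $\mathcal{D}g(\x)$ vanishes whenever $\x\notin\Omega(T)$, which is \emph{not} true in general — so I expect assertion (1) as literally stated to require the convention that one restricts $\mathcal{D}g$ to $\Omega(T)$, or to use the monotone structure: $|\mathcal{D}g|\le\mathcal{D}\nu_{[d]}$ and, by the displayed bound \eqref{Equation401}, $\mathcal{D}\nu_{[d]}$ is itself controlled on $\Omega(T)$ by $T$ but not supported there. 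I would read (1) as: the operator $g\mapsto\1_{\Omega(T)}\cdot\mathcal{D}g$ preserves $\mathcal{F}_T$, and more importantly that $\mathcal{D}g$ is \emph{pointwise bounded by $T$ on $\Omega(T)$} whenever $|g|\le\nu_{[d]}+2$; the latter follows from the correlation-condition bound in Section 4.1 applied to $g/( \nu_{[d]}+2)$, absorbing the harmless constant shift. This is the technical heart and I will treat it carefully.

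For (2): positivity and the triangle inequality for $\|\cdot\|_{\mathrm{BAC}}$ follow formally because it is a supremum of absolute values of linear functionals $f\mapsto\langle f,\mathcal{D}g\rangle$ over the symmetric set $\mathcal{S}_T$ (note $g\in\mathcal{S}_T\iff -g\in\mathcal{S}_T$, so $\|\lambda f\|=|\lambda|\,\|f\|$ as well). Definiteness on $\mathcal{F}_T$ requires that the functionals $\{\langle\cdot,\mathcal{D}g\rangle:g\in\mathcal{S}_T\}$ separate points of $\mathcal{F}_T$; since $\mathcal{F}_T$ is finite-dimensional (as $\X_{[d]}$ is finite), this is equivalent to $\mathrm{Span}\{\mathcal{D}g:g\in\mathcal{S}_T\}=\mathcal{F}_T$, which is exactly assertion (3). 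The extension to a seminorm on all of $\mathcal{F}$ and the identity $\|f\|_{\mathrm{BAC}}=\|f\cdot\1_{\Omega(T)}\|_{\mathrm{BAC}}$ are immediate once one observes that for any $g\in\mathcal{S}_T\subseteq\mathcal{F}_T$ we have $\langle f,\mathcal{D}g\rangle=\E f\,\mathcal{D}g\,\prod\nu_I=\langle f\1_{\Omega(T)},\mathcal{D}g\rangle$ because $\mathcal{D}g$ — or at least the relevant version of it — is supported in $\Omega(T)$ by (1), or alternatively because $g$ itself is supported there and one re-expands $\langle f,\mathcal{D}g\rangle$ symmetrically in $f$ and $g$ as a box-type average in which $g(P_{\underline 0})$ forces the support constraint.

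For (3): since $\mathcal{S}_T$ contains an open neighborhood of $0$ inside $\mathcal{F}_T$ (every $f$ with $|f|$ small lies in $\mathcal{S}_T$ because $\nu_{[d]}+2\ge 1$), and $\mathcal{D}$ is continuous, it suffices to show $\mathcal{D}$ restricted to $\mathcal{F}_T$ has image spanning $\mathcal{F}_T$. I would argue: for $g$ supported near $\underline 0\in\Omega(T)$ and small, $\mathcal{D}g$ is a small perturbation of $\mathcal{D}0$... but $\mathcal{D}0=0$, so instead use $g=\1_{\Omega(T)}+\epsilon h$ and expand $\mathcal{D}g$ to first order in $\epsilon$; the linear term sweeps out a subspace, and by varying the point-mass directions $h=\1_{\{\x_0\}}$, $\x_0\in\Omega(T)$, one checks the derivative $D\mathcal{D}|_{\1_{\Omega(T)}}$ is onto $\mathcal{F}_T$ using the linear-forms/correlation nondegeneracy (the diagonal $\w=\underline 0$ term is nonzero). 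Then $\mathcal{F}_T=\mathrm{Span}(\mathrm{image})$ follows. For (4): this is the standard duality identity — $\|\cdot\|_{\mathrm{BAC}}$ is the support function of the symmetric convex set $K:=\mathrm{conv}\{\pm\mathcal{D}g:g\in\mathcal{S}_T\}$, hence its dual norm is the Minkowski gauge of $K$, which is exactly $\inf\{\sum|\lambda_i|: f=\sum\lambda_i\mathcal{D}g_i,\ g_i\in\mathcal{S}_T\}$; here one uses that $\mathcal{S}_T$ is symmetric and convex and finite-dimensionality to guarantee the infimum is attained. The main obstacle, I expect, is pinning down the precise sense of assertion (1) — i.e. which "version" of $\mathcal{D}g$ is meant and verifying the support/boundedness claim from the correlation condition — since everything else is soft functional analysis in a finite-dimensional space once (1) and (3) are in hand.
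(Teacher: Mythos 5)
The key technical gap is in your treatment of assertion (1). You assert that ``$\mathcal{D}g(\x)$ vanishes whenever $\x\notin\Omega(T)$ \ldots is not true in general'' and then propose reinterpreting (1) with a restriction operator. In fact the statement is literally true, and the reason is a structural feature of $\Omega(T)$ that you are overlooking: each $\Omega_J(T)$ is a \emph{cylinder set}, depending only on the coordinates $\x_{[d]\setminus J}$ (since the argument of $\tau$ in the definition of $\Omega_J(T)$ is built from the quantities $h_{\w_I}=L^I(\x)|_{0(\w_I)}$ with $1(\w_I)=J$, which involve only $\x_{[d]\setminus J}$). Consequently, if $\tilde\x\notin\Omega(T)$, pick $J\subsetneq[d]$, $J\neq\emptyset$, with $\tilde\x\notin\Omega_J(T)$; then $(\tilde\x_{[d]\setminus J},\x_J)\notin\Omega_J(T)\supseteq\Omega(T)$ for \emph{every} $\x_J$, so $g$ vanishes on the whole slice $\{\tilde\x_{[d]\setminus J}\}\times\X_J$. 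Now in the product defining $\mathcal{D}g(\tilde\x)$, the factor indexed by $\w=\1_J$ (which is $\neq\underline 0$ since $J\neq\emptyset$) evaluates $g$ at $P_{\1_J}(\tilde\x,\y)=(\tilde\x_{[d]\setminus J},\y_J)$, hence is identically zero in $\y$; so $\mathcal{D}g(\tilde\x)=0$. This is exactly the paper's argument for (1), and no reinterpretation of the statement is needed.

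Your treatment of (2)--(3) is also different from the paper's in a way that is not sound as written. You reduce definiteness of $\|\cdot\|_{\text{BAC}}$ to assertion (3) (spanning), reasoning that separation of points is equivalent to spanning in finite dimensions. But this equivalence requires the underlying bilinear form $\langle\cdot,\cdot\rangle_\nu$ to be nondegenerate on $\mathcal{F}_T$, which is not verified (and the measure $\prod\nu_I$ can vanish at some points of $\Omega(T)$), so the reduction leaves a gap; moreover the paper actually proves (3) \emph{using} (2), so taking them in the opposite order would need a genuinely independent argument for (3), and your sketch for (3) via a first-order expansion of $\mathcal{D}$ around $\1_{\Omega(T)}$ is not worked out and is not what the paper does. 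The paper proves definiteness directly: given $0\neq f\in\mathcal{F}_T$, take $g=\gamma f$ with $\gamma>0$ small enough that $\|g\|_\infty<2$, so $g\in\mathcal{S}_T$, and observe $\langle f,\mathcal{D}g\rangle=\gamma^{2^d-1}\|f\|_{\Box_\nu}^{2^d}>0$. Your argument for (4) via duality of the gauge of the symmetric convex hull of $\{\mathcal{D}g:g\in\mathcal{S}_T\}$ is essentially the same as the paper's, and is fine. But the proof of (1) is the load-bearing step, and your proposal misses the cylinder structure of $\Omega(T)$ that makes it work.
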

\begin{remark}
If $f \notin \mathcal{F}_T$ then $\text{supp}(f) \nsubseteq \Omega(T)$ so $f$ is not of the form $\sum_{i=1}^k \lambda_i \mathcal{D}g_i; g_i \in \mathcal{F}_T$ as RHS is zero.
\end{remark}	
\begin{proof}
\begin{enumerate}
	\item Suppose $(\tilde{x}_1,...,\tilde{x}_{d}) \in \Omega(T)^C$ then there is an $J \subsetneq [d]$ such that $K(\tilde{\x}_{[d] \backslash J})>T$ where $K$ is the function in the definition of $\Omega_J(T)$ for some $j$. Let $g \in \mathcal{F}_T$ then $g(\tilde{\x}_{[d] \backslash J},\x_J)=0$ for all $\x_J \in \underline{X}_J$ so $\mathcal{D}g \in \mathcal{F}_T$.
	\item It follows directly from the definition that $\left\| f+g \right\|_{\text{BAC}} \leq  \left\| f \right\|_{\text{BAC}}+ \left\| g \right\|_{\text{BAC}} $  and $\left\| \lambda f \right\|_{\text{BAC}}= |\lambda| \left\|   f \right\|_{\text{BAC}}$ for any $\lambda \in \R.$
Now suppose $f \in \mathcal{F}_T, f$ is not identically zero then we need to show that $\left\| f \right\|_{\text{BAC}} \neq 0.$ Since $X$ and $Z$ are finite sets, we have that $\left\|f \right\|_{\infty}=\max_{x,z}|f(x,z)|< \infty$. Let $g=\gamma f$ where $\gamma$ is a constant such that $\left\| g \right\|_{\infty}<2$ then $g \in \mathcal{S}_T$ and $\left\langle f, \mathcal{D}g \right\rangle =\left\langle f, \mathcal{D}\gamma f \right\rangle =\gamma^{2^d-1}\left\langle f,\mathcal{D}f \right\rangle >0$ so $\left\| f \right\|_{\text{BAC}}>0$ 	\\
Now $\text{supp}(\mathcal{D}g) \subseteq \Omega(T) $ we have for any $f \in \mathcal{F}$
\[
\left\| f \right\|_{\text{BAC}}=\sup_{g \in \mathcal{S}_T}|\left\langle f,\mathcal{D}g \right\rangle|=\sup_{g \in \mathcal{S}_T}|\left\langle f \cdot \1_{\Omega(T)},\mathcal{D}g \right\rangle|=\left\| f \cdot \1_{\Omega(T)} \right\|_{\text{BAC}}
\]
\item
If there is an $f \in \mathcal{F}_T, f$ is not identically zero and $f \notin \text{span}\{\mathcal{D}g:g \in \mathcal{S}_T \}$ So  $f \in \text{span}\{\mathcal{D}g:g \in \mathcal{S}_T \}^{\perp}$ then $\left\langle  f , \mathcal{D}g \right\rangle=0$ for all $g \in \mathcal{S}_T$. So $\left\| f \right\|_{BAC}=0 $ which is a contradiction.
\item
Define $\left\| f \right\|_D = \inf\{\sum_{i=1}^k |\lambda_i|:f=\sum_{i=1}^k \lambda_i \mathcal{D}g_i, g_i \in \mathcal{S}_T\}$ which can be easily verified to be a norm on $\mathcal{F}_T$. Now let $\phi,f \in \mathcal{F}_T , f=\sum_{i=1}^k \lambda_i \mathcal{D}g_i, g_i \in \mathcal{S}_T$, then
\[
|\left\langle  \phi ,f \right\rangle|=\sum_{i=1}^k |\lambda_i||\left\langle \phi, \mathcal{D}g_i \right\rangle| \leq \left\| \phi \right\|_{\text{BAC}}\sum_{i=1}^k |\lambda_i| \leq \left\| \phi \right\|_{\text{BAC}}\left\| f \right\|_D
\]
so
\[
\left\| f \right\|_{\text{BAC}}^* \leq \left\| f \right\|_D
\]
Next for all $g \in \mathcal{S}_T$, we have $\left\| \mathcal{D}g  \right\|_D \leq 1$  then
\[
\left\|f \right\|_{\text{BAC}}= \sup_{g \in \mathcal{S}_T}|\left\langle f, \mathcal{D}g\right\rangle | \leq \sup_{\left\| h \right\|_D \leq 1}|\left\langle f,h\right\rangle| = \left\| f \right\|_D^*
\]
so $\left\| f \right\|_{\text{BAC}} \leq \left\| f \right\|_D^*$ i.e. $\left\| f \right\|_{\text{BAC}}^* \geq \left\| f \right\|_D$.
So  $\left\| f \right\|_{\text{BAC}}^* = \left\| f \right\|_D$.
\end{enumerate}
\end{proof}
Now let us prove the following lemma whose proof relies on the dual function estimate. From here we consider our inner product $\left\langle \cdot \right\rangle_{\nu}$ and the norm $\left\| \cdot \right\|_{\Box_{\nu}}$. This argument also works for any norm for which one has the dual function estimate.
\begin{lem}
Let $\phi \in \mathcal{F}_T$ be such that $\left\| \phi \right\|^*_{\text{BAC}} \leq C$ and $\eta > 0$. Let $\phi_+ := \max\{0,\phi\}$. Then there is a polynomial $P(u)=a_mu^m+...+a_1u+a_0$ such that
\begin{enumerate}
	\item  $\left\| P(\phi)-\phi_+ \right\|_{\infty} \leq \eta$
	\item $\left\| P(\phi) \right\|_{\Box^d_{\nu}}^{*} \leq \rho(C,T,\eta)$ \\	
\end{enumerate}
where $$\rho(C,T,\eta):=2\inf R_P(C)$$
where the infimum is taken over polynomials $P$ such that $ \left\|P-\phi_+ \right\|_{\infty} \leq \eta ~~~~~~~\text{on} ~~~~~~~[-CT,CT]$ and
$$R_P(x)=\sum_{j=0}^m C(j)|a_j|x^j, ~~~~~~~\text{where} ~~~~~~~C(m) ~~~~~~~\text{is the constant in the dual function estimate}$$
\end{lem}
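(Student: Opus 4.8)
The plan is to obtain $P$ by the Weierstrass approximation theorem and then track how the cost of each monomial $\phi^j$ propagates through the $\|\cdot\|_{\Box^d_\nu}^*$ norm using the dual function estimate of Theorem 3.1. Concretely, since $\phi \in \mathcal{F}_T$ and $\|\phi\|_{\text{BAC}}^* \le C$, part (4) of the preceding Proposition writes $\phi = \sum_i \lambda_i \mathcal{D}g_i$ with $g_i \in \mathcal{S}_T$ and $\sum_i|\lambda_i|$ arbitrarily close to $C$; in particular $\|\phi\|_\infty \le CT$ because each $\mathcal{D}g_i$ is supported on $\Omega(T)$ and bounded by $T$ there (this is the point of having passed to $\Omega(T)$). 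So the range of $\phi$ is contained in the compact interval $[-CT,CT]$, on which the continuous function $u \mapsto u_+$ can be approximated to within $\eta$ in sup-norm by a polynomial $P(u)=a_mu^m+\dots+a_0$. This gives (1) immediately, since $\|P(\phi)-\phi_+\|_\infty = \sup_{u \in \text{range}(\phi)}|P(u)-u_+| \le \eta$.

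For (2), the key observation is that $\|\cdot\|_{\Box^d_\nu}^*$ is a norm, so $\|P(\phi)\|_{\Box^d_\nu}^* \le \sum_{j=0}^m |a_j|\,\|\phi^j\|_{\Box^d_\nu}^*$, and it remains to bound $\|\phi^j\|_{\Box^d_\nu}^*$. Here I expand $\phi^j = (\sum_i \lambda_i \mathcal{D}g_i)^j$ multilinearly: each term is $\lambda_{i_1}\cdots\lambda_{i_j}\,\mathcal{D}g_{i_1}\cdots\mathcal{D}g_{i_j}$, a product of $j$ dual functions of functions $g_{i_k} \in \mathcal{S}_T$, hence (after absorbing the constant $+2$ into the domination, i.e. noting $|g| \le \nu_{[d]}+2 \lesssim \nu_{[d]}$ up to the linear forms normalization, or more carefully applying Theorem 3.1 to the rescaled functions) the dual function estimate gives $\|\mathcal{D}g_{i_1}\cdots\mathcal{D}g_{i_j}\|_{\Box^d_\nu}^* = O_j(1) = C(j)$, the constant appearing in Theorem 3.1. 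Summing over the $(\sum_i|\lambda_i|)^j$-worth of multilinear terms yields $\|\phi^j\|_{\Box^d_\nu}^* \le C(j)\,(\sum_i|\lambda_i|)^j$, and letting $\sum_i|\lambda_i| \to C$ gives $\|\phi^j\|_{\Box^d_\nu}^* \le C(j)\,C^j$. Therefore $\|P(\phi)\|_{\Box^d_\nu}^* \le \sum_{j=0}^m C(j)|a_j|C^j = R_P(C)$, and since $P$ may be chosen to be any polynomial with $\|P-\phi_+\|_\infty \le \eta$ on $[-CT,CT]$, optimizing over such $P$ yields $\|P(\phi)\|_{\Box^d_\nu}^* \le \inf R_P(C) \le \rho(C,T,\eta)$ (the factor $2$ in the definition of $\rho$ provides slack for the approximation $\sum_i|\lambda_i| \to C$ not being exact and for the constant-term and domination bookkeeping).

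One technical wrinkle worth flagging: the dual function estimate as stated in Theorem 3.1 is for functions $F_j \le \nu_J$, whereas here $g_i$ satisfies only $|g_i| \le \nu_{[d]}+2$. This is handled by splitting $g_i = g_i' + g_i''$ with $|g_i'| \le \nu_{[d]}$ and $|g_i''| \le 2$ (so $|g_i''| \le 2 \le 2\cdot\frac{1}{\|1\|_{\Box_\nu}}\nu$-type bound after using $\mathbb{E}\,\nu = 1+o(1)$, hence dominated by a constant times $\nu_{[d]}$ for large $N$), expanding $\mathcal{D}g_{i_1}\cdots\mathcal{D}g_{i_j}$ multilinearly in these pieces, and applying Theorem 3.1 to each resulting product of at most $j$ functions each dominated by $\nu_{[d]}$; this only changes $C(j)$ by a bounded factor depending on $j$, which can be absorbed into the definition of $C(j)$. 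I expect this domination bookkeeping, rather than the approximation-theoretic or norm-estimate parts, to be the only real place where care is needed; the main conceptual step — converting the a priori bound on $\|\phi\|_{\text{BAC}}^*$ into control of $\|\phi^j\|_{\Box^d_\nu}^*$ via multilinear expansion and the dual function estimate — is routine once one commits to expanding everything out.
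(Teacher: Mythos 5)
Your proof is essentially the paper's: Weierstrass approximation on the range $[-CT,CT]$ gives $P$ and item (1), and item (2) follows from the multilinear expansion of $\phi^j=\bigl(\sum_i\lambda_i\mathcal{D}g_i\bigr)^j$ together with the dual function estimate, giving $\|\phi^j\|^*_{\Box^d_\nu}\le C(j)C^j$ and hence $\|P(\phi)\|^*_{\Box^d_\nu}\le R_P(C)$. (The paper takes $\sum_i|\lambda_i|\le C$ exactly rather than ``arbitrarily close''; since everything is finite-dimensional the infimum defining $\|\cdot\|^*_{\text{BAC}}$ is attained, so both formulations are fine.)

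The technical wrinkle you flag is real and the paper passes over it silently: $g_i\in\mathcal{S}_T$ satisfies only $|g_i|\le\nu_{[d]}+2$, while Theorem~3.1 is stated for $|F_j|\le\nu_J$. But be careful with your first proposed fix --- $\mathbb{E}\,\nu=1+o(1)$ is an average statement, not a pointwise one, and $\nu$ vanishes on a large set (all $n$ with $Wn+b$ not almost prime), so no pointwise inequality $2\le C\nu$ holds. Your second fix (split $g=g'+g''$ and expand) is the right spirit, but $\mathcal{D}$ is degree $2^d-1$ in its argument, so $\mathcal{D}g$ expands into $2^{2^d-1}$ mixed terms, each with some $\nu$-weights replaced by the constant $2$; the linear forms condition still controls these reduced correlations, but strictly speaking one is re-proving a variant of Theorem~3.1 with constants inflated by an $m,d$-dependent factor, not citing it. The cleanest repair is simply to note that Theorem~3.1 and its proof go through verbatim under the weaker hypothesis $|F_j|\le\nu_J+O(1)$, since expanding the products in its final display yields a finite sum of linear-forms averages, each $O(1)$.
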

\begin{proof}
First, recall that if $(x_1,...,x_{d}) \in \text{supp}(\mathcal{D}g_i) \subseteq \Omega(T)$ then
\[
|\mathcal{D}g(x_1,...,x_{d})| \leq  T
\]
Now suppose $\left\| \phi \right\|_{\text{BAC}}^* \leq C$ then there exist $g_1,..,g_k \in \mathcal{S}_T$ and $\lambda_1,...,\lambda_k$ such that $\phi=\sum_{i=1}^k \lambda_i \mathcal{D}g_i$ and $\sum_{1 \leq i \leq k}|\lambda_i| \leq C$. Hence
\[
|\phi(x_1,...,x_{d})| \leq (\sum_{i=1}^k |\lambda_i|)(\max_{1 \leq i \leq k}|\mathcal{D}g_i(x_1,..,x_{d})|) \leq CT
\]
Hence \textit{the Range of} $\phi$ $=\phi(\Omega(T)) \subseteq [-CT,CT]$. Then by Weierstrass approximation theorem, there is a polynomial $P$ (which may depend on $C,T,\eta$) such that $R_P(C) \leq \rho$ and
\[
|P(u)-u_+| \leq \eta \quad \forall |u| \leq CT
\]
and so $\left\| P(\phi)-\phi_+ \right\|_{\infty} \leq \eta$ and we have (1). \\
Now using the dual function estimate, we have
\begin{align*}
\left\| \phi^m \right\|^*_{\Box^d_{\nu}} &\leq \big\| (\sum_{1 \leq i \leq k}\lambda_i \mathcal{D}g_i)^m\big\|^*_{\Box^d_{\nu}}
\leq \sum_{1 \leq i_1 \leq... \leq i_m \leq k}|\lambda_{i_1}...\lambda_{i_m}|\left\| \mathcal{D}g_{i_1}...\mathcal{D}g_{i_m} \right\|_{\Box^d_{\nu}}^* \\
&\leq C(m) \sum_{1 \leq i_1 \leq... \leq i_m \leq k} |\lambda_{i_1}...\lambda_{i_m}|
\leq C(m)(\sum_{1\leq i \leq k}|\lambda_i|)^m
\leq C(m)C^m
\end{align*}
Hence $\left\| P(\phi) \right\|_{\Box^d_{\nu}}^* \leq \sum_{m=0}^d |a_m|C(m)C^m \leq \rho(C,T,\eta)$
\end{proof}
Now we are ready to prove the transference principle.
\begin{thm}
\label{Equation497}
Suppose $\bnu$ is an independent weight system. Let $f \in \mathcal{F}$ and $0 \leq f(\x_{[d]}) \leq \nu_{[d]}(\x_{[d]})$, let $\eta>0$. Suppose $N \geq N(\eta,T)$ is large enough, then there are functions $g,h$ on $X_1 \times ... \times X_{d}$ such that
\begin{enumerate}
	\item $f=g+h$ on $\Omega(T)$
	\item $0 \leq g \leq2$ on  $\Omega(T)$
	\item $\left\| h \cdot \1_{\Omega(T)} \right\|_{\Box^d_{\nu}} \leq \eta$
\end{enumerate}
\end{thm}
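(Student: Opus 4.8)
The plan is to follow the Hahn–Banach / convex-separation argument that underlies the Gowers and Reingold–Trevisan–Tulsiani–Vadhan transference principles, adapted to the weighted setting on $\Omega(T)$. Concretely, I would argue by contradiction: suppose no decomposition $f=g+h$ with $0\le g\le 2$ on $\Omega(T)$ and $\|h\cdot\1_{\Omega(T)}\|_{\Box^d_\nu}\le\eta$ exists. The set $\mathcal K:=\{g\in\mathcal F_T:0\le g\le 2\text{ on }\Omega(T)\}$ is convex, closed and bounded in the finite-dimensional space $\mathcal F_T$, and the ``bad'' hypothesis says $f\cdot\1_{\Omega(T)}$ is at $\|\cdot\|_{\Box^d_\nu}$-distance more than $\eta$ from $\mathcal K$. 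By the Hahn–Banach separation theorem there is a linear functional, i.e. a function $\phi\in\mathcal F_T$, and a constant so that $\langle \phi,f\cdot\1_{\Omega(T)}\rangle > \langle\phi,g\rangle + \eta\|\phi\|_{\Box^d_\nu}^*$ for every $g\in\mathcal K$; here I use the finite-dimensionality to identify the dual of $(\mathcal F_T,\|\cdot\|_{\Box^d_\nu})$ with $\mathcal F_T$ under $\langle\cdot,\cdot\rangle_\nu$. Optimizing the right-hand side over $g\in\mathcal K$ replaces $\sup_g\langle\phi,g\rangle$ by $2\langle\phi_+,\1_{\Omega(T)}\rangle$ (take $g=2\cdot\1_{\{\phi>0\}}$ on $\Omega(T)$), giving
\[
\langle\phi, f\rangle_\nu \;>\; 2\langle\phi_+,\1_{\Omega(T)}\rangle_\nu \;+\; \eta\,\|\phi\|_{\Box^d_\nu}^* .
\]

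Next I would normalize so that $\|\phi\|^*_{\mathrm{BAC}}\le C$ for a suitable absolute $C$ — this is exactly the point where the dual function estimate (Theorem 3.1) and the structure of $\mathcal S_T$ enter: any $\phi$ arising from the separation can, after scaling, be taken with $\|\phi\|_{\mathrm{BAC}}^*$ bounded, because $\mathcal K$ contains a ball in $\|\cdot\|_{\mathrm{BAC}}$-norm (this uses Proposition 4.?, in particular that $\|\cdot\|_{\mathrm{BAC}}$ is a genuine norm on $\mathcal F_T$ and $\mathrm{Span}\{\mathcal Dg:g\in\mathcal S_T\}=\mathcal F_T$). With $\|\phi\|^*_{\mathrm{BAC}}\le C$ in hand, I apply Lemma 4.? (the polynomial approximation lemma): choose a polynomial $P$ with $\|P(\phi)-\phi_+\|_\infty\le\eta'$ and $\|P(\phi)\|^*_{\Box^d_\nu}\le\rho(C,T,\eta')$ for $\eta'$ to be chosen. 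Substituting $P(\phi)$ for $\phi_+$ introduces errors of size $O(\eta'\cdot\|f\|_{L^1(\nu)})=O(\eta')$ (using $0\le f\le\nu_{[d]}$ and the linear forms condition to bound $\E\,\nu_{[d]}=1+o(1)$), and the dual pairing $\langle\phi, f\rangle_\nu - \langle P(\phi),f\rangle_\nu$ is controlled: write $\phi-P(\phi)$ as a polynomial in $\phi=\sum\lambda_i\mathcal Dg_i$ and invoke the dual function estimate again, or simply bound it by the box-norm duality $|\langle\phi-P(\phi),f\rangle_\nu|\le\|f\|_{\Box^d_\nu}\|\phi-P(\phi)\|^*_{\Box^d_\nu}$ — but since we do not yet control $\|f\|_{\Box^d_\nu}$, it is cleaner to use the $\|\cdot\|_\infty$ bound on $P(\phi)-\phi_+$ together with $\langle\phi,f\rangle_\nu-2\langle\phi_+,\1_{\Omega(T)}\rangle_\nu = \langle\phi-\phi_+-\phi_+,f-2\cdot\1\rangle_\nu + \cdots$, i.e. rearrange the separation inequality into $\langle\phi,(f-g^\star)\cdot\1_{\Omega(T)}\rangle_\nu>\eta\|\phi\|^*_{\Box^d_\nu}$ where $g^\star$ is the minimizer. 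Either way the outcome is that, after replacing $\phi$ by $P(\phi)$ and absorbing the $\eta'$-errors,
\[
\bigl|\langle P(\phi),\, (f-\phi_+)\cdot\1_{\Omega(T)}\rangle_\nu\bigr| \;>\; \tfrac{\eta}{2}\,\|P(\phi)\|^*_{\Box^d_\nu}\,.
\]

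Finally, the left-hand side is bounded using Cauchy–Schwarz for the inner product $\langle\cdot,\cdot\rangle_\nu$ in box-norm form: $|\langle P(\phi),\psi\rangle_\nu|\le\|\psi\|_{\Box^d_\nu}\|P(\phi)\|^*_{\Box^d_\nu}$ with $\psi=(f-\phi_+)\cdot\1_{\Omega(T)}$. But $|f-\phi_+|\le\nu_{[d]}+2$ on $\Omega(T)$ (indeed $f$ is nonnegative and $\phi_+\ge0$, with $\phi$ itself bounded there), so $\psi$ is dominated by $\nu_{[d]}+2$; the weighted generalized von Neumann inequality (Theorem 2.?) together with the linear forms condition then forces $\|\psi\|_{\Box^d_\nu}\lesssim 1$, hence $|\langle P(\phi),\psi\rangle_\nu|\le O(1)\cdot\|P(\phi)\|^*_{\Box^d_\nu}$. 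This does not immediately contradict the displayed inequality unless $\eta$ is small — so the correct accounting is the standard iterative/energy-increment one: one does \emph{not} get a contradiction in a single step, but rather an energy-increment. I would therefore run the argument as an iteration: starting from $f_0=f$, as long as the desired decomposition fails, the separation produces a $\phi$ and hence (via $P(\phi)$, which is a bounded function up to $\eta'$) a bounded ``structured'' piece that can be subtracted off, increasing the $L^2(d\mu)$-energy of the structured approximant by a definite amount $\gtrsim\eta^{2^{d+1}}$ or so; since the energy is bounded by $\E\,\nu_{[d]}(\ldots)=O(1)$, the iteration terminates after $O(\eta^{-2^{d+1}})$ steps, at which point the decomposition exists. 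The hard part is the bookkeeping of the error terms — ensuring that replacing $\phi_+$ by the polynomial $P(\phi)$, restricting everything to $\Omega(T)$, and invoking the correlation/linear-forms conditions on the relevant faces all produce errors that are $o(1)$ as $N\to\infty$ (for $N$ large relative to $T$ and $\eta$) rather than errors depending on $T$; this is precisely why the set $\Omega(T)$ and the explicit description of $\mathcal D f$ from the correlation condition in Section 4.1 were set up, and why the dual estimate was proved with constant $O_K(1)$ independent of $N$.
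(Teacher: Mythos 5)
Your framework—Hahn--Banach separation on the finite-dimensional space $\mathcal F_T$, using the $\mathrm{BAC}$-norm so that the dual-norm bound $\|\phi\|_{\mathrm{BAC}}^*\leq\eta^{-1}$ makes $\phi$ a bounded combination of dual functions, then the polynomial approximation $\|P(\phi)-\phi_+\|_\infty\leq\eta'$ with $\|P(\phi)\|_{\Box^d_\nu}^*\leq\rho(\eta^{-1},T,\eta')$ via the dual function estimate—is exactly the paper's setup (the paper factors this through the intermediate Theorem~4.3, which replaces condition~(3) with the $\mathrm{BAC}$-norm, and notes that $\|h\cdot\1_{\Omega(T)}\|_{\mathrm{BAC}}\leq\eta$ implies $\|h\cdot\1_{\Omega(T)}\|_{\Box^d_\nu}^{2^d}\leq\eta$ because $h\cdot\1_{\Omega(T)}\in\mathcal S_T$). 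Where you go wrong is in the closing: you try to use box-norm duality by pairing $P(\phi)$ against $\psi=(f-\phi_+)\cdot\1_{\Omega(T)}$, whose $\Box^d_\nu$-norm is only $O(1)$, and conclude (correctly, from that dead end) that you cannot beat the large constant $\rho$ in a single step, leading you to retreat to an energy-increment iteration. No iteration is needed.

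The move you are missing is to exploit domination and positivity rather than box-norm control of $f$. Since $0\leq f\leq\nu_{[d]}$ and $\phi_+\geq0$, one has $\langle\phi,f\rangle_\nu\leq\langle\phi_+,f\rangle_\nu\leq\langle\phi_+,\nu_{[d]}\rangle_\nu$. Now replace $\phi_+$ by $P(\phi)$ with $\ell^\infty$-error $\eta'=1/8$, paying $\|P(\phi)-\phi_+\|_\infty\,\langle 1,\nu_{[d]}\rangle_\nu\leq(1/8)(1+o(1))$; then split $\langle P(\phi),\nu_{[d]}\rangle_\nu=\langle P(\phi),1\rangle_\nu+\langle P(\phi),\nu_{[d]}-1\rangle_\nu$. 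The \emph{first} term is controlled by the separation inequality (taking $g=2\cdot\1_{\{\phi\geq0\}}\in K$ gives $\langle\phi_+,1\rangle_\nu\leq1/2$, hence $\langle P(\phi),1\rangle_\nu\leq1/2+1/8$), and the \emph{second} term is where box-norm duality is invoked: $|\langle P(\phi),\nu_{[d]}-1\rangle_\nu|\leq\|P(\phi)\|_{\Box^d_\nu}^*\,\|\nu_{[d]}-1\|_{\Box^d_\nu}\leq\rho(\eta^{-1},T,\eta')\cdot o_{N}(1)$. Here the potentially enormous constant $\rho$ multiplies a quantity that tends to zero as $N\to\infty$ (for fixed $T,\eta$), so choosing $N\geq N(\eta,T)$ large enough makes this $\leq1/8$ and one lands at $\langle\phi,f\rangle_\nu\leq3/4+1/8<1$, contradicting $\langle\phi,f\cdot\1_{\Omega(T)}\rangle_\nu>1$ from the separation. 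In short: pair the large dual norm $\|P(\phi)\|_{\Box^d_\nu}^*$ against the $o_N(1)$ quantity $\|\nu_{[d]}-1\|_{\Box^d_\nu}$, not against an $O(1)$ quantity built from $f$; this is precisely the purpose of the hypothesis $N\geq N(\eta,T)$, and it closes the argument in one shot with no energy increment.
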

To prove this theorem, it suffices to show
 \begin{thm}
 \label{Equation498}
  With the same assumption in Theorem 4.2, there are functions $g,h$ such that
\begin{enumerate}
	\item $f=g+h$ on $\Omega(T)$
	\item $0 \leq g \leq2$ on  $\Omega(T)$
	\item $\left\| h \cdot \1_{\Omega(T)} \right\|_{\text{BAC}} \leq \eta$
\end{enumerate}
Here the BAC-norm is the BAC-norm with respect to $\left\langle  \cdot \right\rangle_{\nu}$
\end{thm}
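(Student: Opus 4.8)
The plan is to obtain the decomposition by a Hahn--Banach separation argument carried out inside the finite-dimensional space $\mathcal{F}_T$, in the spirit of the transference/dense-model arguments of \cite{GW2}, \cite{RTTV}, with the dual function estimate entering through Lemma~4.1. Because $\|h\|_{\text{BAC}}=\|h\cdot\1_{\Omega(T)}\|_{\text{BAC}}$ and we may let $g,h$ vanish off $\Omega(T)$, it suffices to produce $g,h\in\mathcal{F}_T$ with $f\cdot\1_{\Omega(T)}=g+h$, $0\le g\le 2$, and $\|h\|_{\text{BAC}}\le\eta$. Consider the convex set
\[
K:=\{\, g+h:\ g,h\in\mathcal{F}_T,\ 0\le g\le 2,\ \|h\|_{\text{BAC}}\le\eta \,\}\subseteq\mathcal{F}_T ,
\]
which is compact since the admissible $g$'s are bounded and the admissible $h$'s lie in a ball of the genuine norm $\|\cdot\|_{\text{BAC}}$ on the finite-dimensional $\mathcal{F}_T$ (Proposition, part (2)). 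If $f\cdot\1_{\Omega(T)}\notin K$ then there is $\phi\in\mathcal{F}_T$ with $\langle\phi,f\cdot\1_{\Omega(T)}\rangle_\nu>\sup_{k\in K}\langle\phi,k\rangle_\nu$; here $\phi\neq 0$, and since $\|\cdot\|^*_{\text{BAC}}$ is again a norm on $\mathcal{F}_T$ we may normalise $\|\phi\|^*_{\text{BAC}}=1$ by homogeneity. The aim is to show this is impossible once $N$ is large.

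Evaluating the support function of the Minkowski sum $K$: as $\phi$ is supported on $\Omega(T)$ and the weights $\prod_{|I|<d}\nu_I$ are nonnegative, $\sup_{0\le g\le 2}\langle\phi,g\rangle_\nu=2\langle\phi_+,1\rangle_\nu$ (attained at $g=2\cdot\1_{\{\phi>0\}}$) while $\sup_{\|h\|_{\text{BAC}}\le\eta}\langle\phi,h\rangle_\nu=\eta\,\|\phi\|^*_{\text{BAC}}=\eta$, and $\langle\phi,f\cdot\1_{\Omega(T)}\rangle_\nu=\langle\phi,f\rangle_\nu$ again because $\phi$ lives on $\Omega(T)$; so the separation becomes $\langle\phi,f\rangle_\nu>2\langle\phi_+,1\rangle_\nu+\eta$. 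On the other hand $0\le f\le\nu_{[d]}$ forces $\langle\phi,f\rangle_\nu\le\langle\phi_+,f\rangle_\nu\le\langle\phi_+,\nu_{[d]}\rangle_\nu$, so, since $\langle\phi_+,1\rangle_\nu\ge 0$, the whole matter reduces to the single ``uniform distribution'' estimate
\[
\langle\phi_+,\nu_{[d]}\rangle_\nu\ \le\ \langle\phi_+,1\rangle_\nu+\eta/2 ,
\]
which contradicts the displayed strict inequality.

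To prove this estimate I would use that $\|\phi\|^*_{\text{BAC}}=1$ already confines the range of $\phi$ to $[-T,T]$ (the computation in the proof of Lemma~4.1) and feed $\phi$ into Lemma~4.1 with accuracy $\eta':=\eta/10$, obtaining a polynomial $P$ with $\|P(\phi)-\phi_+\|_\infty\le\eta'$ and $\|P(\phi)\|^*_{\Box^d_\nu}\le\rho_0:=\rho(1,T,\eta')$, a constant depending only on $T$ and $\eta$. Writing $\langle\phi_+,\nu_{[d]}\rangle_\nu-\langle\phi_+,1\rangle_\nu=\langle\phi_+,\nu_{[d]}-1\rangle_\nu$ and substituting $\phi_+=P(\phi)+(\phi_+-P(\phi))$, one estimates: (i) $|\langle\phi_+-P(\phi),\nu_{[d]}\rangle_\nu|$ and $|\langle\phi_+-P(\phi),1\rangle_\nu|$ by $\eta'(1+o(1))$ each, using $\E_{\x}\nu_{[d]}(\x)\prod_{|I|<d}\nu_I(\x_I)=1+o(1)$ and $\E_{\x}\prod_{|I|<d}\nu_I(\x_I)=1+o(1)$ from the linear forms condition; and (ii) $|\langle P(\phi),\nu_{[d]}-1\rangle_\nu|\le\|P(\phi)\|^*_{\Box^d_\nu}\,\|\nu_{[d]}-1\|_{\Box^d_\nu}\le\rho_0\,\|\nu_{[d]}-1\|_{\Box^d_\nu}$, where expanding $\prod_{\w}(\nu_{[d]}(P_{\w}(\x,\y))-1)$ as a signed sum over subsets of $\{0,1\}^d$ and applying the linear forms condition to each of the $2^{2^d}$ resulting averages, all equal to $1+o(1)$, makes the main terms cancel, so $\|\nu_{[d]}-1\|_{\Box^d_\nu}=o(1)$. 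Choosing $N\ge N(\eta,T)$ so large that these finitely many $o(1)$'s (with $\rho_0$ fixed) are each below $\eta/10$ yields $\langle\phi_+,\nu_{[d]}\rangle_\nu\le\langle\phi_+,1\rangle_\nu+\eta/2$; the contradiction shows $f\cdot\1_{\Omega(T)}\in K$, which is the asserted decomposition.

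I expect the genuine obstacle to be step (ii): the separating functional $\phi$ is an \emph{arbitrary} element of $\mathcal{F}_T$, with no a priori control on $\|\phi\|_{\Box^d_\nu}$ or $\|\phi_+\|_{\Box^d_\nu}$, so one cannot directly claim $\langle\phi_+,\nu_{[d]}-1\rangle_\nu$ is small; this is exactly why Lemma~4.1 (and behind it the dual function estimate of Section~3) is invoked, to replace $\phi_+$ by a polynomial $P(\phi)$ in the dual functions $\mathcal{D}g_i$ whose $\Box^d_\nu$-dual norm is controlled uniformly in $N$. A secondary point to watch is the order of quantifiers: one must normalise $\|\phi\|^*_{\text{BAC}}=1$ \emph{before} applying Lemma~4.1, so that $P$ and hence $\rho_0$ depend only on $T$ and $\eta$ and not on $N$, which is what lets the $o(1)$ savings from the linear forms condition beat the fixed budget $\eta/10$; and one should note that $\langle\cdot,\cdot\rangle_\nu$ and $\|\cdot\|^*_{\text{BAC}}$ are a bona fide pairing and norm on the finite-dimensional $\mathcal{F}_T$, which is what licenses both the separation and the normalisation.
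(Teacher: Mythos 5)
Your proof is correct and follows essentially the same route as the paper's: Hahn--Banach separation of $f\cdot\1_{\Omega(T)}$ from the Minkowski sum of the two convex bodies inside $\mathcal{F}_T$, followed by Lemma~4.1 to approximate $\phi_+$ by a polynomial $P(\phi)$ with controlled $\|\cdot\|^*_{\Box^d_\nu}$, and the linear forms condition giving $\|\nu_{[d]}-1\|_{\Box^d_\nu}=o(1)$ to defeat the separation. The only cosmetic difference is your normalisation $\|\phi\|^*_{\text{BAC}}=1$ (so Lemma~4.1 is invoked with $C=1$ and accuracy $\eta/10$), whereas the paper invokes Gowers's Corollary~3.2 directly to get $\langle\phi,f\rangle>1$, $\langle\phi,g\rangle\le 1$ on $K$, $\langle\phi,h\rangle\le 1$ on $L$, hence $\|\phi\|^*_{\text{BAC}}\le\eta^{-1}$ and Lemma~4.1 with $C=\eta^{-1}$ and accuracy $1/8$; the two bookkeepings are interchangeable and lead to the same contradiction.
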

\begin{proof}[Theorem \ref{Equation498} $\Rightarrow$ Theorem \ref{Equation497}:] Since $h \cdot \1_{\Omega(T)}=f \cdot \1_{\Omega(T)}-g\cdot \1_{\Omega(T)}$, we have $-2 \leq h \cdot \1_{\Omega(T)} \leq \nu $ so
$|h \cdot \1_{\Omega(T)}| \leq \nu+2 $ so $h \cdot \1_{\Omega(T)} \in \mathcal{S}_T$. Hence by the definifion of BAC-norm,
\[
\eta \geq \| h \cdot \1_{\Omega(T)}\|_{\text{BAC}} \geq \langle h \cdot \1_{\Omega(T)}, \mathcal{D} (h \cdot \1_{\Omega(T)}) \rangle_{\nu} = \| h \cdot \1_{\Omega(T)}\|_{\Box^d_{\nu}}^{2^d}
\]
\end{proof}
The following lemma will be used in the next proof.
\begin{lem}[cororally 3.2 in \cite{GW2}]
Let $K_1,..K_r$ be closed convex subsets of $\R^d$, each containing 0 and suppose $f \in \mathbb{R}^d$ cannot be written as a sum $f_1+...+f_r, f_i \in c_iK_i,c_i>0.$ Then there is a linear functional $\phi$ such that $\left\langle f, \phi \right\rangle >1$ and $\left\langle g,\phi \right\rangle \leq c_i^{-1}$ for all $i \leq r$ and all $g \in K_i.$
\end{lem}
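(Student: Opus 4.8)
The plan is to recognize the statement as a direct instance of the Hahn--Banach separation theorem for a point and a closed convex set, followed by a normalization. Set $C := c_1 K_1 + \cdots + c_r K_r$, the Minkowski sum. Then $C$ is convex (a sum of convex sets), it contains $0 = 0 + \cdots + 0$ because $0 \in K_i$ for every $i$, and it is closed: in all of our applications each $K_i$ is a bounded subset of the finite-dimensional space $\mathcal{F}$, hence compact, so $C$ is compact. The hypothesis that $f$ admits no representation $f = f_1 + \cdots + f_r$ with $f_i \in c_i K_i$ says precisely that $f \notin C$.

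First I would invoke strict separation: since $C$ is a closed convex set and $f \notin C$, there exist a nonzero linear functional $\psi$ and a real number $\alpha$ with $\langle f, \psi \rangle > \alpha \geq \langle x, \psi \rangle$ for all $x \in C$ (take $x_0$ the point of $C$ nearest $f$ and $\psi = f - x_0$). Because $0 \in C$ we get $\alpha \geq 0$; and since $\langle f, \psi \rangle > \alpha \geq 0$ we may replace $\alpha$ by any value in the interval $(0, \langle f, \psi \rangle)$ that still exceeds $\sup_{x \in C} \langle x, \psi \rangle$, so without loss of generality $\alpha > 0$. Now set $\phi := \psi / \alpha$. Dividing both inequalities by $\alpha > 0$, which preserves their direction, yields $\langle f, \phi \rangle > 1$ and $\langle x, \phi \rangle \leq 1$ for every $x \in C$.

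Finally I would transfer the bound on $C$ to the individual sets. Fix $i \leq r$ and $g \in K_i$. Since $0 \in K_j$ for every $j \neq i$, the vector $c_i g$ equals $0 + \cdots + c_i g + \cdots + 0$, so $c_i g \in C$; hence $c_i \langle g, \phi \rangle = \langle c_i g, \phi \rangle \leq 1$, that is, $\langle g, \phi \rangle \leq c_i^{-1}$. As $i$ and $g \in K_i$ were arbitrary, $\phi$ has all the asserted properties, which completes the argument.

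The only genuinely delicate point is obtaining the \emph{strict} inequality $\langle f, \phi \rangle > 1$ together with $\langle x, \phi \rangle \leq 1$ throughout $C$; this requires $C$ to be closed (so the nearest-point construction produces a strict separation) and the observation that $0 \in C$ forces the separating constant $\alpha$ to be nonnegative, which is exactly what allows the normalization by $1/\alpha$ without reversing any inequality. Both are handled above, closedness of $C$ being automatic in the bounded finite-dimensional setting in which the lemma is used; the remainder is routine bookkeeping about Minkowski sums.
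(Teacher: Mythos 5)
The paper itself offers no proof of this lemma --- it is quoted from \cite{GW2} --- so your argument has to stand on its own, and its overall route (separate $f$ from the Minkowski sum $C=c_1K_1+\cdots+c_rK_r$ by a nearest-point functional, use $0\in C$ to normalize so the threshold is $1$, then specialize to $c_ig\in C$ for $g\in K_i$) is exactly the standard one; the normalization and the bookkeeping at the end are correct. The genuine gap is the one point you flag and then dismiss: closedness of $C$. Your patch, ``in all of our applications each $K_i$ is bounded, hence compact,'' is not true in this paper. In the proof of Theorem 4.5 the lemma is applied to $K=\{g\in\mathcal{F}:0\le g\le 2 \text{ on }\Omega(T)\}$ and $L=\{h\in\mathcal{F}:\left\|h\right\|_{\text{BAC}}\le\eta\}$, and (whenever $\Omega(T)^C\neq\emptyset$) both are unbounded: $K$ puts no constraint on values off $\Omega(T)$, and $\left\|\cdot\right\|_{\text{BAC}}$ is only a seminorm on $\mathcal{F}$, vanishing on every function supported on $\Omega(T)^C$, so $L$ contains that whole subspace. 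Thus the very case the lemma is needed for is not covered by your justification of closedness.

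Nor can the issue be waved away, because the statement as printed (arbitrary closed convex $K_i\ni 0$) is false without some extra hypothesis. In $\R^2$ take $K_1=\{(x,0):x\in\R\}$, $K_2=\{(x,y):x\ge 0,\ y\ge e^{-x}-1\}$, $c_1=c_2=1$, $f=(0,-1)$: both sets are closed, convex and contain $0$, the sum $K_1+K_2=\{(x,y):y>-1\}$ is not closed, and $f$ admits no decomposition; yet any $\phi=(s,t)$ with $\left\langle g,\phi\right\rangle\le 1$ on $K_1$ forces $s=0$, and the bound on $K_2$ forces $-1\le t\le 0$, whence $\left\langle f,\phi\right\rangle=-t\le 1$, so no admissible $\phi$ exists. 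A correct write-up must therefore either add a hypothesis guaranteeing that $c_1K_1+\cdots+c_rK_r$ is closed (compactness of each $K_i$, or of all but one, suffices), or verify closedness directly in the application at hand: here both $K$ and $L$ are invariant under adding arbitrary functions supported on $\Omega(T)^C$, so $K+L=(K_0+L_0)\oplus V$, where $V$ is that subspace and $K_0,L_0\subseteq\mathcal{F}_T$ are compact convex sets, and such a sum is closed. With that observation (or with the lemma restated for compact $K_i$, applied inside $\mathcal{F}_T$) your argument goes through; as written, it is incomplete precisely at its one delicate point.
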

\begin{proof}[Proof of Theorem \ref{Equation498}:] Define
\begin{align*}
K &:= \{g \in \mathcal{F}: 0 \leq g \leq 2 ~~~~~~~~~\text{on}~~~~~~~~~\Omega(T) \} \\
L &:= \{h \in \mathcal{F}: \left\| h \right\|_{\text{BAC}} \leq \eta\}
\end{align*}
Then it is clear that $K,L$ are convex.(Also $0 \in K, 0 \in \text{Int}(L)$ and then $0 \in \text{Int}(K+L)$.)
Assume that $f \notin K+L$ on $\Omega(T)$ then by Lemma 4.4, there exists $\phi \in \mathcal{F}$ such that
\begin{enumerate}
	\item $\left\langle  \phi,f \cdot \1_{\Omega(T)} \right\rangle_{\nu} > 1$
	\item $\left\langle  \phi,g \right\rangle_{\nu} \leq 1 ~~~~~~~~~~~~~~~~~~~\forall g \in K$
	\item  $\left\langle  \phi,h \right\rangle_{\nu} \leq 1 ~~~~~~~~~~~~~~~~~~~\forall h \in L$
\end{enumerate}
First, we claim that $\phi \in \mathcal{F}_T.$ To see this, suppose $g$ is a function whose $\text{supp}(g)\subseteq \Omega(T)^C$ (i.e. $g \equiv 0$ on $\Omega(T)$ so $g \in K$.) Since $g \in K, \left\langle  \phi,g \right\rangle_{\nu} \leq 1$ but $g$ could be chosen arbitrarily on $\Omega(T)^C$ so we must have $\phi \big|_{\Omega(T)^C} \equiv 0$ and hence $\phi \in \mathcal{F}_T$. Now let
\[
g(\x_{[d]})= \begin{cases}
                        2 &\text{if}~~~~~~~~~~~~~~~~~~~~~~~~~~~~~~~~~~~~~~~~~~~~~~~~~~~~~~~~~~~~~~~~~~~~~~~~~~~~~~~~~~~~~~~~~~~~~~~~~~~~~~~~~~~~~~~~~~~ \phi(\x_{[d]})\geq 0 \\ 0 &\text{otherwise}
                        \end{cases}
\]
then $g \in K$ and
\[
\left\langle \phi,g \right\rangle_{\nu}=\left\langle \phi_+,2 \right\rangle_{\nu} =2\left\langle \phi_+,1 \right\rangle_{\nu}  \leq 1 \Rightarrow \left\langle \phi_+,1 \right\rangle_{\nu} \leq \frac 12
\]
Now since $\phi \in \mathcal{F}_T, h \in L$. Suppose $\| h \cdot \1_{\Omega(T)^C} \|_{\text{BAC}} \leq 1$ then we have
\[
\langle \phi, h \cdot \1_{\Omega(T)^C}\rangle_{\nu}=\langle \phi,h \rangle_{\nu} \leq \eta^{-1}.
\]
Hence if $h' \in \mathcal{F}_T$ and $\left\| h' \right\|_{\text{BAC}} \leq 1$ then $\left\| h' \cdot \1_{\Omega(T)} \right\|_{\text{BAC}} =\left\| h' \right\|_{\text{BAC}} \leq 1 $ so
\[
\langle  \phi,h' \rangle_{\nu} \leq \eta^{-1} \quad\forall h' \in \mathcal{F}_T, \left\| h' \right\|_{\text{BAC}} \leq 1
\]
so $\left\| \phi \right\|_{\text{BAC}}^* \leq \eta^{-1}$ as $\left\| \cdot \right\|_\text{BAC}$ is a norm on $\mathcal{F}_T$.\\

Now by the Lemma 4.1, there is a polynomial $P$ such that
\[
\left\| P(\phi)-\phi_+ \right\|_{\infty} \leq \frac 18
\]
and
\[
\left\| P(\phi) \right\|_{\Box^d_{\nu}}^* \leq \rho(C,T,\eta)
\]
Then $\left\langle P(\phi),1 \right\rangle_{\nu} \leq \left\langle P(\phi)-\phi_+,1 \right\rangle_{\nu}+ \left\langle \phi_+,1 \right\rangle_{\nu} \leq \frac 12 + \frac 18$
Also, from the definition of the weighted box norm and the linear form condition, we have
\[
\| \nu_{[d]}(\x_{[d]})-1 \|^{2^d}_{\Box^d_{\nu}}=o_{N \rightarrow \infty}(1)
\]
so suppose $N \geq N(T,\eta)$ then
\[
\left\langle P(\phi),\nu_{[d]}\right\rangle_{\nu}=\left\langle P(\phi),1\right\rangle_{\nu}+\left\langle P(\phi),\nu_{[d]}-1\right\rangle_{\nu}
\leq \frac 12+\frac 18 +\left\|P(\phi)\right\|_{\Box^d_{\nu}}^*\left\| \nu_{[d]}-1 \right\|_{\Box^d_{\nu}} \leq \frac 12 + \frac 14= \frac 34
\]
\[
|\left\langle \nu_{[d]}, \phi_+ \right\rangle_{\nu}| =|\left\langle \nu_{[d]}, \phi_+ -P(\phi)\right\rangle_{\nu}|+|\left\langle \nu_{[d]},  P(\phi)\right\rangle_{\nu}| \leq \left\| \phi_+-P(\phi) \right\|_{\infty}\left\langle \nu_{[d]},1 \right\rangle_{\nu}+\left\langle  \nu_{[d]},P(\phi)\right\rangle_{\nu} \leq \frac 18 \cdot \frac 12 +\frac 34
\]
Hence
\[
\left\langle  f \cdot \1_{\Omega(T)}, \phi \right\rangle_{\nu}=\left\langle  f, \phi \right\rangle_{\nu} \leq \left\langle  f,\phi_+ \right\rangle_{\nu} \leq \left\langle \nu_{[d]},\phi_+ \right\rangle_{\nu} \leq \frac 34 +\frac {1}{10}<1
\]
which is a contradiction. Hence $f \in K+L$ on $\Omega(T)$.
\end{proof}
Now we can rephrase Theorem \ref{Equation498} as follow:
\begin{thm}[Transference Principle]
\label{Equation499}
Suppose $\bnu$ is an independent weight system. Let $f \in \mathcal{F}, 0 \leq f \leq \nu$ and $0 < \eta <1 \ll T$ then there exists $f_1,f_2,f_3 \in \mathcal{F}$ such that
\begin{enumerate}
	\item $f=f_1+f_2+f_3 $
	\item $0 \leq f_1 \leq 2,~~~~~~~~~~~~~~ \text{supp}(f_1) \subseteq \Omega(T)$
	\item $\left\| f_2 \right\|_{\Box^d_{\nu}} \leq \eta, ~~~~~~~~~~~~~~ \text{supp}(f_2) \subseteq \Omega(T)$
	\item  $0 \leq f_3 \leq \nu,~~~~~~~~~~~~~~ \text{supp}(f_3) \subseteq \Omega(T)^C,\ \left\| f_3 \right\|_{L^1_{\nu}} \lesssim \frac 1T.$
\end{enumerate}
\end{thm}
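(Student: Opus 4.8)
The plan is to obtain Theorem~\ref{Equation499} directly from the decomposition furnished by Theorem~\ref{Equation497}, by cutting both pieces at $\Omega(T)$ and collecting whatever is lost on $\Omega(T)^C$ into a single $L^1_{\nu}$-small function. Fix $f$ with $0\le f\le\nu_{[d]}$, fix $\eta\in(0,1)$ and $T\gg 1$, and take $N\ge N(\eta,T)$ so that Theorem~\ref{Equation497} applies: it produces $g,h:\X_{[d]}\to\R$ with $f=g+h$ on $\Omega(T)$, with $0\le g\le 2$ on $\Omega(T)$, and with $\|h\cdot\1_{\Omega(T)}\|_{\Box^d_{\nu}}\le\eta$. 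I then set
\[
f_1:=g\cdot\1_{\Omega(T)},\qquad f_2:=h\cdot\1_{\Omega(T)},\qquad f_3:=f-f_1-f_2 .
\]

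With this choice, conclusions (1)--(3) are immediate. On $\Omega(T)$ one has $f_3=f-g-h=0$, so $f_3=f\cdot\1_{\Omega(T)^C}$; hence $\operatorname{supp}(f_3)\subseteq\Omega(T)^C$ and $0\le f_3\le\nu_{[d]}$ because $0\le f\le\nu_{[d]}$. Also $0\le f_1\le 2$ since $0\le g\le 2$ on $\Omega(T)$, both $f_1$ and $f_2$ are supported in $\Omega(T)$ by construction, and $\|f_2\|_{\Box^d_{\nu}}=\|h\cdot\1_{\Omega(T)}\|_{\Box^d_{\nu}}\le\eta$. So the only remaining point is the quantitative tail estimate $\|f_3\|_{L^1_{\nu}}\lesssim 1/T$.

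For that, I would estimate, using $0\le f_3\le\nu_{[d]}\cdot\1_{\Omega(T)^C}$ and $\1_{\Omega(T)^C}\le\sum_{\emptyset\ne J\subsetneq[d]}\1_{\Omega_J(T)^C}$,
\[
\|f_3\|_{L^1_{\nu}}=\int_{\X_{[d]}}f_3\,d\mu_{\X_{[d]}}\le\sum_{\emptyset\ne J\subsetneq[d]}\E_{\x_{[d]}}\nu_{[d]}(\x_{[d]})\,\1_{\Omega_J(T)^C}(\x_{[d]})\prod_{|I|<d}\nu_I(\x_I).
\]
Writing $K_J(\x_{[d]}):=\sum_{\{\w_{I_1},\w_{I_2}\}\in T_J}\tau\bigl(W\cdot(a_{\w_{I_1}}h_{\w_{I_1}}-a_{\w_{I_2}}h_{\w_{I_2}})+(a_{\w_{I_1}}-a_{\w_{I_2}})b\bigr)\ge 0$, one has $\Omega_J(T)=\{K_J\le T^{1/2^d}\}$, so $\1_{\Omega_J(T)^C}\le T^{-1}K_J^{2^d}$ pointwise and therefore
\[
\|f_3\|_{L^1_{\nu}}\le\frac1T\sum_{\emptyset\ne J\subsetneq[d]}\E_{\x_{[d]}}\nu_{[d]}(\x_{[d]})\,K_J(\x_{[d]})^{2^d}\prod_{|I|<d}\nu_I(\x_I).
\]
Expanding the $2^d$-th power of $K_J$ expresses each inner average as a sum of boundedly many (the count depending only on $d$) averages of a product of $\nu_{[d]}$, the fixed factors $\prod_{|I|<d}\nu_I(\x_I)$, and $2^d$ evaluations of $\tau$ at affine-linear forms in $\x_{[d]}$; by the correlation condition recorded in the appendix each of these is $O(1)$, uniformly once $N\ge N(T)$. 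Summing over the at most $2^d$ sets $J$ then gives $\|f_3\|_{L^1_{\nu}}\le C_d/T\lesssim 1/T$, as required.

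The only step that is not pure bookkeeping is the last one: after expanding $K_J^{2^d}$ one must check that the affine forms feeding the $\tau$'s, taken together with the linear forms defining the factors $\nu_{[d]},\nu_I$, form an admissible, pairwise non-proportional system to which the correlation (and linear forms) condition applies with a constant independent of $N$. This is exactly where the defining property of an independent weight system --- pairwise linear independence of the forms $L^j_I$ --- enters, just as in the dual boundedness computation above and in the dual function estimate of Section~3; granting that, everything else reduces to the elementary splitting by $\Omega(T)$ described in the first two paragraphs.
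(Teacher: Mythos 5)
Your choice $f_1=g\cdot\1_{\Omega(T)}$, $f_2=h\cdot\1_{\Omega(T)}$, $f_3=f\cdot\1_{\Omega(T)^C}$, and the verification of items (1)--(3) together with $0\le f_3\le\nu_{[d]}$, $\text{supp}(f_3)\subseteq\Omega(T)^C$, match the paper's proof exactly. Where you diverge is the $L^1$ tail bound, and there your argument has a gap. The paper does \emph{not} go through the correlation condition for this step: it bounds $\1_{\Omega(T)^C}\le T^{-1}\mathcal{D}f$ pointwise (here $\mathcal{D}f$ is the majorant from Section 4.1, i.e.\ the dual of $\nu_{[d]}$), uses $|f|\le\nu_{[d]}$, and observes that
\[
\|f_3\|_{L^1_\nu}\;\le\;\frac1T\,\E_{\x_{[d]},\y_{[d]}}\prod_{\w_{[d]}}\nu_{[d]}\big(P_{\w}(\x,\y)\big)\prod_{|I|<d}\prod_{\w_I}\nu_I\big(P_{\w_I}(\x_I,\y_I)\big)\;=\;\frac1T\,\big\|\nu_{[d]}\big\|_{\Box^d_\nu}^{2^d}\;\lesssim\;\frac1T,
\]
which is $(1+o(1))/T$ by the linear forms condition, since all the forms appearing in an independent weight system are pairwise linearly independent.

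Your route instead starts from $\1_{\Omega(T)^C}\le\sum_J\1_{\Omega_J(T)^C}\le T^{-1}\sum_J K_J^{2^d}$ --- which is a correct consequence of the $K_J$--description of $\Omega(T)$ --- and then you need $\E\big[\nu_{[d]}\,K_J^{2^d}\prod_{|I|<d}\nu_I\big]=O_d(1)$. You attribute this to "the correlation condition recorded in the appendix," but the correlation condition does not give it. After expanding $K_J^{2^d}$ you are averaging a product of $\nu$'s at pairwise independent linear forms against a product of $2^d$ values of $\tau$ at affine forms. The correlation condition as stated only (i) bounds averages of products of $\nu$'s by products of $\tau$'s and (ii) gives the single-variable moment bound $\E\tau^m=O_m(1)$; neither controls the mixed average $\E\big[\nu_{[d]}\prod\nu_I\cdot\prod_k\tau(\ell_k)\big]$. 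The obvious Cauchy--Schwarz separation does not rescue it either, since it produces $\E[\nu_{[d]}^2\prod\nu_I^2]$, whose repeated arguments are not admissible for the linear forms condition. So the final sentence of your proof asserts something that has not been established; you would need a genuinely new estimate for these mixed $\nu$--$\tau$ moments. The cleanest repair is to replace $\1_{\Omega(T)^C}$ by $T^{-1}$ times the dual-function majorant as the paper does, at which point the whole bound collapses to the box norm $\|\nu_{[d]}\|_{\Box^d_\nu}^{2^d}$ and is handled by the linear forms condition alone.
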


\begin{proof}
Let $g,h$ be as in Theorem 4.3. Take $f_1=g \cdot \1_{\Omega_T},f_2=h \cdot \1_{\Omega_T}$ then $f \cdot \1_{\Omega_T}=f_1+f_2$. Let $f_3=f \cdot \1_{\Omega_T^C}$. Now by linear form condition

\begin{align*}
\left\| f_3 \right\|_{L^1_{\nu}} 
&\leq \frac{1}{T}\E_{\x_{[d]}} f \cdot \mathcal{D}f \cdot \prod_{I \subseteq [d],|I|<d}\nu_I(\x_I)   \\
&= \frac 1T\E_{\x_{[d]}}\E_{\y_{[d]}}\prod_{I \subseteq [d]}\nu_I(\x_I) \prod_{I \subseteq [d]}\prod_{\w_I \neq \underline{0}}\nu_I((P_{\w_I}(\x_I,\y_I)))
\lesssim \frac 1T
\end{align*}
\end{proof}

\section{Relative Hypergraph Removal Lemma}\numberwithin{equation}{section}
First let us recall the statement of ordinary functional hypergraph removal lemma \cite{TA3}.\footnote{In fact the paper \cite{TA3} proves this theorem only with the counting measure (with thenotion of $e-$discrepancy in place of Box norm). But the proof also works for any finite measure that has direct product structure (with the notion of weighted Box Norm).(see \cite{TA1} for the case of probability measures in $d=2,3$). However we don't know how to genralize this argument to arbitrary measure on the product space. If we can prove this theorem for any measure $\mu_{X_1 \times ... \times X_d}$ then we would be able to prove multidimensional Green-Tao's Theorem. } Recall the definition of $\Lambda$ in equation \eqref{Lambda}.
\begin{thm}
Given measure spaces $(X_1,\mu_{X_1}),...,(X_{d+1},\mu_{X_{d+1}})$ and $f^{(i)}: \bX_I  \rightarrow [0,1],I =[d+1] \backslash \{i\}$  Let $\epsilon >0,$
suppose $|\Lambda_{d+1}(f^{(1)},...,f^{(d)},f^{(d+1)})| \leq \epsilon.$
Then for $1 \leq i \leq d+1$, there exists $$E_i \subseteq X_{[d+1] \backslash \{i\}}$$
such that $\ \prod_{1 \leq j \leq d+1}\1_{E_j} \equiv 0 $  and for $1 \leq i \leq d+1,$\\
\[
\int_{X_1} \cdots \int_{X_{d+1}}  f^{(i)} \cdot \1_{E_i^C} d \mu_{X_1} \cdots d\mu_{X_d}d\mu_{X_{d+1}} \leq \delta(\epsilon)
\]
where $\delta(\epsilon) \rightarrow 0$ as $\epsilon \rightarrow 0.$
\end{thm}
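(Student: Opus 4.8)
The plan is to recall the proof of the functional hypergraph removal lemma of \cite{TA3}, the only adaptation being that here the measure on $X_1\times\cdots\times X_{d+1}$ is the product $\mu_{X_1}\otimes\cdots\otimes\mu_{X_{d+1}}$, so every step is run with the weighted box norm attached to that product measure, exactly as set up in Section 2; this product structure of the measure is the sole feature used, which is precisely why the statement is not available for arbitrary measures. The three ingredients are a hypergraph regularity lemma, the generalized von Neumann inequality, and a counting lemma. For the regularization: fix small parameters $\tau,\sigma,\beta>0$ to be specified last; for each $i$ the hypergraph regularity lemma writes $f^{(i)}=g^{(i)}+b^{(i)}+e^{(i)}$, where $g^{(i)}=\E(f^{(i)}\mid\mathcal{B}_i)$ is the conditional expectation onto a bounded-complexity $\sigma$-algebra $\mathcal{B}_i$ on $\bX_{[d+1]\setminus\{i\}}$ generated by cylinder sets depending on at most $d-1$ coordinates (so $0\le g^{(i)}\le1$), with $\|b^{(i)}\|_{\Box}\le\tau$ and $\|e^{(i)}\|_{L^1}\le\tau$; one arranges that the generating lower-order cylinder sets all come from one common family of partitions of the faces $\bX_J$ ($|J|<d$), so the $g^{(i)}$ live on a single ``cell'' decomposition of the product. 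This lemma is itself obtained by the usual energy-increment argument in the box norm.

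Next I would discard the quasirandom and $L^1$-small parts. Expanding $\Lambda_{d+1}(f^{(1)},\dots,f^{(d+1)})$ multilinearly gives $3^{d+1}$ terms. Any term with a $b^{(i)}$ factor is $O(\|b^{(i)}\|_{\Box})=O(\tau)$ by the generalized von Neumann inequality for bounded functions and a product measure — this is the $\nu\equiv1$ case of the Cauchy–Schwarz computation proving the weighted generalized von Neumann inequality above; any term with no $b$ factor but an $e^{(i)}$ factor is $O(\|e^{(i)}\|_{L^1})=O(\tau)$ since the remaining factors are bounded and may be integrated out. The sole remaining term is $\Lambda_{d+1}(g^{(1)},\dots,g^{(d+1)})$, and hence $\Lambda_{d+1}(g^{(1)},\dots,g^{(d+1)})\le\epsilon+C_d\tau$.

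Then comes the counting lemma and the removal sets. Discard from each $\bX_{[d+1]\setminus\{i\}}$ the union of $\mathcal{B}_i$-atoms of measure $<\beta$ together with the non-quasirandom atoms (both of negligible total measure), and set $E_i:=\bigcup\{\text{remaining atoms }C:\ \E(f^{(i)}\mid C)\ge\sigma\}$. If a point of the product had all $d+1$ of its facets in the $E_j$'s, the facet-atoms it meets would each be quasirandom, of measure $\ge\beta$, and of $g^{(j)}$-density $\ge\sigma$, so the counting lemma would force $\Lambda_{d+1}(g^{(1)},\dots,g^{(d+1)})\gtrsim\sigma^{d+1}\beta$, contradicting the previous paragraph once $\epsilon+C_d\tau$ is small relative to $\sigma^{d+1}\beta$; thus $\prod_j\1_{E_j}\equiv0$. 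On the other hand $E_i^C$ is covered by the small atoms, the non-quasirandom atoms, and the atoms with $\E(f^{(i)}\mid C)<\sigma$; using $f^{(i)}\le g^{(i)}+|b^{(i)}|+|e^{(i)}|$, the fact that $b^{(i)},e^{(i)}$ contribute $O(\tau)$ in $L^1$, and that the low-density atoms contribute at most $\sigma$ to $\int g^{(i)}\1_{E_i^C}$, one gets $\int f^{(i)}\1_{E_i^C}\le\sigma+O(\tau)+(\text{negligible})$. Choosing the regularity parameters, then $\beta$, then $\sigma$, and finally imposing $\epsilon<c\,\sigma^{d+1}\beta$ in the standard order produces a $\delta(\epsilon)\to0$.

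The delicate point, and the one I expect to be the main obstacle, is the counting lemma — that a $(d+1)$-tuple of quasirandom, positive-measure facet-atoms with densities bounded below really does carry a positive, quantitatively controlled share of $\Lambda_{d+1}$ of the structured functions — together with the parameter juggling it forces: it is precisely the need to retain every atom of measure $\ge\beta$ while $\beta$ shrinks only like the (tower-type) inverse of the partition complexity, itself governed by $\tau$, that makes $\delta(\epsilon)$ a tower-type rather than polynomial function of $\epsilon$. Everything else is the formal bookkeeping of the regularity method, carried out with the weighted box norm in place of the discrepancy notion of \cite{TA3}.
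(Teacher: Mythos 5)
The paper does not prove this theorem; it is cited verbatim from Tao \cite{TA3}, with a footnote observing only that the argument there (written for counting measure and $e$-discrepancy) transfers to any product measure, with the weighted box norm playing the role of discrepancy. Your proposal reconstructs Tao's proof skeleton correctly --- regularity, generalized von Neumann, then counting and removal sets --- but it has two gaps exactly where the content lies. The first concerns the regularity decomposition: you bound $\|b^{(i)}\|_{\Box}\le\tau$ for a fixed $\tau$, whereas the version the paper imports (its Theorem 5.2, from \cite{TA3}) bounds the box-norm error by $F(M)^{-1}$ for an arbitrary growth function $F$ chosen beforehand, $M$ being the resulting complexity. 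This is essential, not cosmetic: in the counting step the quasirandom contribution $O(\|b^{(i)}\|_{\Box})$ must be made negligible against the signal $\gtrsim\sigma^{d+1}\beta^{O(d)}$ of a surviving cell, and $\beta$ shrinks like a tower-type inverse of $M$ --- a quantity you only learn after regularizing. Without the $F(M)^{-1}$ device the parameter selection is circular, and with a fixed $\tau$ you cannot close the chase you describe in your last paragraph.

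The second and more serious gap is the counting lemma, which you assert but neither state precisely nor prove. For $d\ge 3$ the cells of $\mathcal{B}_i$ are not products of vertex-atoms: $g^{(i)}$ is measurable with respect to a join of $(d-1)$-face partitions $\mathcal{B}_J$, so the measure of a compatible combination of facet-cells is governed by how these lower-order atoms intersect, and it is comparable to a product of densities only when those lower-order atoms are themselves mutually quasirandom at every level of the hierarchy. Organizing that recursive regularity --- which is what forces the tower-type bounds and the $F(M)^{-1}$ formulation in the first place --- and then extracting the lower bound on $\Lambda_{d+1}(g^{(1)},\dots,g^{(d+1)})$ is the substance of \cite{TA3}, \cite{GW1}, \cite{R1}. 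You rightly flag this as the delicate point, but in a proof it cannot remain an assertion; it is essentially the theorem.
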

Also let us state a functional version of Szemer\'edi's Regularity Lemma \cite{TA3} that we will use later in the proof. If $\mathcal{B}$ is a finite factor of $X$ i.e. a finite $\sigma-$algebra of measurable sets in $X$, then $\mathcal{B}$ is a partition of $X$ into atoms $A_1,...,A_M$. Let $f:X \rightarrow \R$ be measurable then we define the conditional expectation $\E(f| \mathcal{B}):X \rightarrow \R$ is defined by $\E(f| \mathcal{B})(x)=(1/|A_i|)\int_{A_i}f(x)d\mu_{X}$ if $x \in A_i$ (defined up to set of measure zero). We say that $\mathcal{B}$ has complexity at most $m$ if it is generated by at most $m$ sets. If $\mathcal{B}_X$ is a finite factor of $X$ with atoms $A_1,...,A_M$ and $\mathcal{B}_Y$ is a finite factor of $Y$ with atoms $B_1,...,B_N$ then $\mathcal{B}_X \vee \mathcal{B}_Y$ is a finite factor of $X \times Y$ with atoms $A_i \times B_j, 1\leq i \leq M, 1\leq j \leq N.$
\begin{thm}[Szemer\'edi's Regularity Lemma \cite{TA3}\footnote{This theorem is proved for counting measure in \cite{TA3} but the proof would work for any product measure on the product spaces.}] 
Let $f:\bX_{[d]} \rightarrow [0,1]$ be measurable, let $\tau >0$ and $F:\N \rightarrow \N$ be arbitrary increasing functions (possibly depends on $\tau$). Then there is an integer $M=O_{F,\tau}(1),$ factors $\mathcal{B}_I (I \subseteq [d],|I|=d-1)$ on $\bXi$ of complexity at most $M$ such that $f=f_1+f_2+f_3$ where
\begin{itemize}
	\item $f_1=\E({f| \bigvee_{I \subseteq [d],|I|=d-1} \mathcal{B}_I}).$
	\item $\left\| f_2 \right\|_{L^2_{\nu}} \leq \tau.$
	\item $\left\| f_3 \right\|_{\Box^d_{\nu}} \leq F(M)^{-1}.$
	\item $f_1,f_1+f_2 \in [0,1].$
\end{itemize}
\end{thm}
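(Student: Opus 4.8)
The plan is to run the classical energy‑increment proof of the Szemer\'edi regularity lemma, exactly as in \cite{TA3}, the only structural changes being that the counting measure is everywhere replaced by the weighted measure $d\mu_{\bX_{[d]}}=\prod_{|I|<d}\nu_I$ and the $e$‑discrepancy is replaced by $\|\cdot\|_{\Box^d_\nu}$. For a system $\vec{\mathcal B}=(\mathcal B_I)_{|I|=d-1}$ of factors on the codimension‑one faces I write $f_{\vec{\mathcal B}}:=\E(f\mid\bigvee_{|I|=d-1}\mathcal B_I)$ for the $L^2_\nu$‑orthogonal projection, and set $\mathrm{En}(\vec{\mathcal B}):=\|f_{\vec{\mathcal B}}\|_{L^2_\nu}^2$. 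Since $0\le f\le 1$ forces $0\le f_{\vec{\mathcal B}}\le 1$, the linear forms condition gives $\mathrm{En}(\vec{\mathcal B})\le\|1\|_{L^2_\nu}^2=\E_{\x}\prod_{|I|<d}\nu_I(\x_I)=1+o(1)$ uniformly in $\vec{\mathcal B}$; this boundedness of the energy, which in the unweighted case is trivial, is the one place the pseudorandomness of $\nu$ is genuinely used in the scheme.

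The inner regularization step is: given $\vec{\mathcal B}$ of complexity $\le M$, as long as $\|f-f_{\vec{\mathcal B}}\|_{\Box^d_\nu}>F(M)^{-1}$ I refine each $\mathcal B_I$ by a bounded number of level sets so as to gain a definite amount of energy. To produce the refining functions, write $g:=f-f_{\vec{\mathcal B}}$ and expand $\|g\|_{\Box^d_\nu}^{2^d}=\langle g,\mathcal D g\rangle_\nu$ with $\mathcal D g(\x)=\E_{\y}\prod_{\w\neq\0}g(P_{\w}(\x,\y))\prod_{|I|<d}\prod_{\w_I\neq\0}\nu_I(P_{\w_I}(\x_I,\y_I))$. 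For each fixed $\y$ every factor $g(P_{\w}(\x,\y))$, $\w\neq\0$, and every factor $\nu_I(P_{\w_I}(\x_I,\y_I))$, $\w_I\neq\0$, depends (as a function of $\x$) on a proper subset of the coordinates, hence on at most $d-1$ of them; grouping the factors gives a representation $\prod_{|I|=d-1}\Phi_I^{\y}(\x_I)$ of the integrand, and pigeonholing in $\y$ yields a single $\y$ with $|\langle g,\prod_{|I|=d-1}\Phi_I^{\y}\rangle_\nu|\ge\|g\|_{\Box^d_\nu}^{2^d}$. Refining each $\mathcal B_I$ by the level sets of $\Phi_I^{\y}$ at a fixed scale and using Pythagoras (the tower property of $\E(\cdot\mid\cdot)$) then raises $\mathrm{En}$ by $\gs_{M,F}1$. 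Because the energy is bounded, this inner iteration halts after $O_{M,F}(1)$ steps, producing $\vec{\mathcal B}'$ with $\|f-f_{\vec{\mathcal B}'}\|_{\Box^d_\nu}\le F(M)^{-1}$ and $\mathrm{cplx}(\vec{\mathcal B}')=O_F(M)$.

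For the outer loop I take $\vec{\mathcal B}^{(0)}$ trivial and let $\vec{\mathcal B}^{(n+1)}$ be the output of the inner regularization applied to $\vec{\mathcal B}^{(n)}$, so that $\mathrm{cplx}(\vec{\mathcal B}^{(n+1)})=O_F(\mathrm{cplx}(\vec{\mathcal B}^{(n)}))$ and $\|f-f_{\vec{\mathcal B}^{(n+1)}}\|_{\Box^d_\nu}\le F(\mathrm{cplx}(\vec{\mathcal B}^{(n)}))^{-1}$. The energies $\mathrm{En}(\vec{\mathcal B}^{(n)})$ are nondecreasing and bounded by $1+o(1)$, so there is some $n\le\tau^{-2}+1$ with $\mathrm{En}(\vec{\mathcal B}^{(n+1)})-\mathrm{En}(\vec{\mathcal B}^{(n)})\le\tau^2$. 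I then set $M:=\mathrm{cplx}(\vec{\mathcal B}^{(n)})=O_{F,\tau}(1)$, $\mathcal B_I:=\mathcal B_I^{(n)}$, $f_1:=f_{\vec{\mathcal B}^{(n)}}$, $f_2:=f_{\vec{\mathcal B}^{(n+1)}}-f_{\vec{\mathcal B}^{(n)}}$ and $f_3:=f-f_{\vec{\mathcal B}^{(n+1)}}$. Then $f=f_1+f_2+f_3$, $f_1=\E(f\mid\bigvee_I\mathcal B_I)$ and $f_1+f_2=f_{\vec{\mathcal B}^{(n+1)}}$ both lie in $[0,1]$ as conditional expectations of a $[0,1]$‑valued function; $\|f_2\|_{L^2_\nu}^2=\mathrm{En}(\vec{\mathcal B}^{(n+1)})-\mathrm{En}(\vec{\mathcal B}^{(n)})\le\tau^2$ by Pythagoras; and $\|f_3\|_{\Box^d_\nu}\le F(M)^{-1}$ by construction.

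The main obstacle is the refinement step: the functions $\Phi_I^{\y}$ above contain shifted factors $\nu_I(P_{\w_I}(\x_I,\y_I))$, which in the relative setting are not bounded, so they cannot be used directly as refining functions. As in Section~4 this is handled by restricting to the set $\Omega(T)$ on which the corresponding anti‑uniform (dual) functions are bounded, the complement being negligible in $L^1_\nu$ by the correlation condition (exactly the estimate used in the transference principle), and by replacing $\Phi_I^{\y}$ with a bounded polynomial‑in‑$\mathcal D$ approximation, so that the dual function estimate of Section~3 keeps both the refining functions and the induced growth of complexity under control. In the case of the weight system attached to corners, where only the $1$‑edges and the top $d$‑edges carry a weight, $\mu_{\bX_{[d]}}=\prod_{j}\nu(x_j)$ is an honest product measure of total mass $1+o(1)$, the dual functions are automatically bounded, and the argument of \cite{TA3} goes through verbatim; the remainder is a routine transcription of the classical proof.
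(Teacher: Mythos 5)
Your proposal is essentially a fleshing-out of what the paper leaves implicit: the paper does not prove this statement, but cites \cite{TA3} together with the footnote assertion that the energy-increment argument there carries over \emph{verbatim to product measures}. Your sketch correctly identifies the structure of that argument in the weighted setting: define the energy $\mathrm{En}(\vec{\mathcal B})=\|f_{\vec{\mathcal B}}\|_{L^2_\nu}^2$, bound it by $1+o(1)$ via the linear forms condition (the one genuine use of pseudorandomness in the boundedness part), run the inner regularization by turning a large box norm of $f-f_{\vec{\mathcal B}}$ into refining functions of lower complexity, and iterate the outer loop until the energy increment falls below $\tau^2$, then set $f_1,f_2,f_3$ exactly as you do. This matches the paper's intended route.

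Two remarks. First, the pigeonholing ``a single $\y$ with $|\langle g,\prod_I\Phi_I^{\y}\rangle_\nu|\ge\|g\|_{\Box^d_\nu}^{2^d}$'' is slightly too quick: the $\y$-average in $\|g\|_{\Box^d_\nu}^{2^d}$ carries the weights $\prod_j\nu(y_j)$, and $g=f-f_{\vec{\mathcal B}}$ is signed, so a bare max-vs.-mean argument does not by itself produce a $\y$ for which the inner product is simultaneously large and the weight controlled. This is repairable (e.g., by keeping the whole weighted $\y$-average as the anti-uniform object, as in \cite{TA3}, and only quantizing at the end, or by using that $\E_\y\prod_j\nu(y_j)=1+o(1)$), but as written it is a small gap. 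Second, your closing paragraph pinpoints exactly why the footnote restricts to product measures, and you should say so more forcefully: for a general independent weight system the factors $\nu_I(P_{\w_I}(\x_I,\y_I))$ with $1<|I|<d$ genuinely depend on $\x$, are unbounded, and would have to be folded into the refining functions, which is where the unweighted argument breaks down --- this is precisely the caveat recorded in the footnote to Theorem~5.1. For the corner weight system the intermediate $\nu_I$ are identically $1$, the shifted weights in the box norm depend only on $\y$, $\mu_{\bX_{[d]}}=\prod_j\nu(x_j)$ is an honest product measure, and the refining functions remain bounded; so the transcription of \cite{TA3} really is direct. Your proposed $\Omega(T)$-truncation for the non-product case is a plausible direction but is not needed for the theorem as the paper actually uses it, and should be flagged as speculative rather than as part of the proof.
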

\begin{remark}
A consequence from this lemma that we will use later is the following: since $f_1$ is a constant on each atom of $\bigvee_{I.|I|=d-1} \mathcal{B}_I$, we can decompose $f_1$ as a finite sum of lower complexity functions i.e.a finite sum of product $\prod_{i=1}^d J_i$ where $J_i $ is a function in $\x_{[d] \backslash \{i\}}$ variable and takes values in $[0,1].$
\end{remark}
\begin{thm}[Weighted Simplex-Removal Lemma]
 Suppose $f^{(i)}(\x_{[d+1]\backslash \{i \}}) \leq \nu_{[d+1]\backslash \{i\}}(\x_{[d+1]\backslash \{i \}})$.
Let $\epsilon>0$, Suppose $|\Lambda| \leq \epsilon$ then there exist $E_i \subseteq \prod_{j \in [d+1] \backslash \{i\}}X_j$ such that for $1 \leq i \leq d+1$,
\begin{itemize}
\item $\displaystyle{\prod_{i \in [d+1]}}\1_{E_i} \equiv 0$
\item $\int_{X_1} \cdots \int_{X_{d+1}} f^{(i)}\1_{E_i^C}d \mu_{X_1} \cdots d\mu_{X_{d+1}}= \E_{\x_{[d+1] \backslash \{i\}}}\1_{E_i^C}f^{(i)}(\x_{[d+1]\backslash\{i\}})\prod_{J \subsetneq [d+1]\backslash\{i\}}\nu_J(\x_J) \leq \delta(\epsilon) $
\end{itemize}
where $\delta(\epsilon) \rightarrow 0$ as $\epsilon \rightarrow 0.$
\end{thm}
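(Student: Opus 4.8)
\emph{Proof strategy.} The plan is to deduce the weighted removal lemma from the ordinary (unweighted) functional removal lemma stated above, by using the transference principle to reduce to bounded functions and the weighted generalized von Neumann inequality to throw away the resulting errors. Fix $\epsilon>0$; I will choose a small $\eta>0$ and a large $T>1$ (both functions of $\epsilon$) at the end, and I take $N$ large in terms of $\eta,T$.

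First I would apply the transference principle (Theorem~\ref{Equation499}) to each $f^{(i)}$, $1\le i\le d+1$. Writing $\Omega_i(T)$ for the set $\Omega(T)$ associated to the index set $[d+1]\backslash\{i\}$, this gives $f^{(i)}=f_1^{(i)}+f_2^{(i)}+f_3^{(i)}$ with $0\le f_1^{(i)}\le 2$ supported in $\Omega_i(T)$, with $\|f_2^{(i)}\|_{\Box^d_{\nu}}\le\eta$ supported in $\Omega_i(T)$, and with $0\le f_3^{(i)}\le\nu_{[d+1]\backslash\{i\}}$ supported in $\Omega_i(T)^C$ and $\|f_3^{(i)}\|_{L^1_{\nu}}\lesssim 1/T$. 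In particular $f^{(i)}=f_3^{(i)}$ off $\Omega_i(T)$. Next I would check that the count survives: expanding $\Lambda$ multilinearly, $\Lambda(f_1^{(1)},\dots,f_1^{(d+1)})$ differs from $\Lambda(f^{(1)},\dots,f^{(d+1)})$ by $O_d(1)$ terms in each of which some slot holds an $f_2^{(j)}$ or an $f_3^{(j)}$ while the other slots hold functions dominated by $\nu_{[d+1]\backslash\{\cdot\}}+2$ (which satisfies the same linear forms conditions as $\nu$). A term with an $f_2^{(j)}$ is $O(\|f_2^{(j)}\|_{\Box^d_{\nu}})=O(\eta)$ by the weighted generalized von Neumann inequality; a term with no $f_2$ but an $f_3^{(j)}$ is, after dominating the other factors, concentrated on $\Omega_j(T)^C$, where by construction the $\tau$-sum defining $\Omega_j(T)$ exceeds $T^{1/2^d}$, and so is $O(T^{-1/2^d})$ by the linear forms and correlation conditions. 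Hence $|\Lambda(f_1^{(1)},\dots,f_1^{(d+1)})|\le\epsilon+O_d(\eta+T^{-1/2^d})=:\epsilon'$.

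Then I would apply the unweighted removal lemma to the $[0,1]$-valued functions $\tfrac12 f_1^{(i)}$, noting $|\Lambda(\tfrac12 f_1^{(1)},\dots,\tfrac12 f_1^{(d+1)})|=2^{-(d+1)}|\Lambda(f_1^{(1)},\dots,f_1^{(d+1)})|\le\epsilon'$ and that, for the weight systems of interest (e.g.\ those parametrising corners, where only $1$-edge weights are nontrivial), the measure $d\mu_{\X_{[d+1]\backslash\{i\}}}$ is a direct product measure, so the functional removal lemma applies and yields $E_i\subseteq X_{[d+1]\backslash\{i\}}$ with $\prod_{i\in[d+1]}\1_{E_i}\equiv 0$ and $\int f_1^{(i)}\1_{E_i^C}\,d\mu\le 2\delta_0(\epsilon')$. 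Crucially I would take the $E_i$ produced by the regularity-based proof of that lemma, so that each $E_i$ is a union of cells of a factor of bounded complexity and hence $\1_{E_i^C}$ is a finite sum of products $\prod_k J_k$ of lower-complexity $[0,1]$-valued functions (as in the Remark after the regularity lemma). Finally, for each $i$, using $f^{(i)}=f_3^{(i)}$ off $\Omega_i(T)$,
\[
\int f^{(i)}\1_{E_i^C}\,d\mu_{\X_{[d+1]\backslash\{i\}}}=\int f_1^{(i)}\1_{E_i^C}\,d\mu+\int f_2^{(i)}\1_{E_i^C}\,d\mu+\int f_3^{(i)}\1_{E_i^C}\,d\mu,
\]
where the first term is $\le 2\delta_0(\epsilon')$, the third is $\le\|f_3^{(i)}\|_{L^1_{\nu}}\lesssim 1/T$ (since $f_3^{(i)}\ge0$ and $\1_{E_i^C}\le1$), and the middle term, after expanding $\1_{E_i^C}=\sum_\alpha\prod_k J_k^\alpha$, is a sum of $O(1)$ multilinear averages $\langle f_2^{(i)},\prod_k J_k^\alpha\rangle_{\nu}$ each bounded by $O(\|f_2^{(i)}\|_{\Box^d_{\nu}})=O(\eta)$ by the same Cauchy--Schwarz argument that proves the weighted generalized von Neumann inequality. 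Choosing $\eta=\eta(\epsilon)\to0$ and $T=T(\epsilon)\to\infty$ slowly enough that $O_d(\eta+T^{-1/2^d})\le\epsilon$, the total bound $2\delta_0(\epsilon')+O(\eta+1/T)$ is a function $\delta(\epsilon)\to0$, as desired.

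The hard part will be the middle term above: box-norm smallness of $f_2^{(i)}$ controls $\Lambda$-type multilinear averages but \emph{not} a bare correlation $\langle f_2^{(i)},g\rangle_{\nu}$ against an arbitrary bounded $g$, so one is forced to use the bounded-complexity form of the removal lemma precisely so that $\1_{E_i^C}$ decomposes into lower-complexity factors and $\langle f_2^{(i)},\1_{E_i^C}\rangle_{\nu}$ becomes a genuine $\Lambda$. A secondary difficulty is the $f_3$-contributions, where one genuinely leaves the good set $\Omega(T)$ and must invoke the correlation condition (not just the linear forms condition) to gain the factor $T^{-1/2^d}$.
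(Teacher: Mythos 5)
Your proposal follows essentially the same route as the paper's proof: transference (Theorem 4.6) to split each $f^{(i)}$ into a bounded part, a box-norm-small part and a part supported off $\Omega^{(i)}(T)$; control of $\Lambda$ on the bounded pieces via the weighted von Neumann inequality together with an $O(1/T)$ bound for the off-$\Omega$ contributions; application of the unweighted removal lemma to the bounded pieces; and finally recovery of $\int f^{(i)}\1_{E_i^C}$ by decomposing $\1_{E_i^C}$ into lower-complexity products so that the box-norm-small contribution is amenable to a von-Neumann-type Cauchy--Schwarz argument. The only notable deviation is that you extract bounded-complexity $E_i$ directly from the removal lemma's proof whereas the paper applies the regularity lemma afresh to $\1_{F_i^C}$, but this is the same mechanism and your version is if anything slightly cleaner since it avoids the extra $L^2$-error term produced by a second regularization.
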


\begin{proof}
Using the transference principle (Theorem 4.6) for $1 \leq i \leq d+1$, write $f^{(i)}=g^{(i)}+h^{(i)}+k^{(i)}$ where
\begin{enumerate}
	\item $f^{(i)}=g^{(i)}+h^{(i)}+k^{(i)} $
	\item $0 \leq g^{(i)} \leq 2,~~~~~~~~~~~~~~ \text{supp}(g^{(i)}) \subseteq \Omega^{(i)}(T)$
	\item $\left\| h^{(i)} \right\|_{\Box^d_{\nu}} \leq \eta, ~~~~~~~~~~~~~~ \text{supp}(h^{(i)}) \subseteq \Omega^{(i)}(T)$
	\item  $k^{(i)}=f^{(i)} \cdot \1_{(\Omega^{(i)})^C(T)}$
\end{enumerate}
where
\[
\Omega^{(i)}(T)= \{ \x_{[d+1] \backslash \{i\}}: |\mathcal{D}f^{(i)}| \leq T \}, 1 \leq i \leq d
\]
\textbf{Step 1:} We'll show that if $T \geq T(\epsilon)$ is sufficiently large then
\[
\Lambda_{d+1}(g^{(1)}+h^{(1)},...,g^{(d+1)}+h^{(d+1)})=\Lambda_{d+1}(f^{(1)}-k^{(1)},...,f^{(d+1)}-k^{(d+1)}) \lesssim \epsilon.
\]
\textbf{\textsl{Proof of Step 1:}}
For $I \subseteq [d+1]$, the term on LHS can be written as a sum of the following terms:
\[
\Lambda_{d+1,I}(e^{(1)},...,e^{(d)},e^{(d+1)}), \quad e^{(i)} = \begin{cases} -k^{(i)} &\text{if}~~~~~~~~ i \in I \\f^{(i)} &\text{if}~~~~~~~~ i \notin I \end{cases}
\]
If $I = \emptyset$ then $\Lambda_{d+1}(f^{(1)},...,f^{(d)},f^{(d+1)}) \leq \epsilon$   ~~~~~~by the assumption. Suppose $I=\{i_1,...,i_r \} \neq \emptyset$ then
\begin{align*}
|\Lambda_{d+1,I}(e^{(1)},...,e^{(d)},f^{(d+1)})|
&=\bigg|\int_{X_1} \cdots \int_{X_{d+1}}f^{(1)} \cdots f^{(d+1)} \cdot \prod_{i \in I}\1_{(\Omega^{(i)})^C} d\mu_{X_1} \cdots d\mu_{X_{d+1}} \bigg|\\
&\leq \E_{\x_{[d+1]}}\prod_{I \subseteq [d+1],|I| \leq d}\nu_I(\x_I) \1_{(\Omega^{(i_1)})^C} \\
&\leq \frac 1T \E_{\x_{d+1}}\E_{\y_{[d+1] \backslash \{i_1\}}}\prod_{I \subseteq [d+1],|I| \leq d}\nu_I(\x_I) \prod_{\substack{\w_I \neq \underline{0}\\I \subseteq [d+1]\backslash\{i_1\}}}\nu_I(P_{\w_I}(\x_I,\y_I)) \\
&\lesssim \frac 1T
\end{align*}
by linear form condition.\\
\textbf{
Step 2}~~~~~~~~~~~~~~~~~~~~ We'll show $\Lambda_{d+1}(g^{(1)},...,g^{(d+1)}) \lesssim \epsilon$ \quad if $\eta \leq \eta(\epsilon),N \geq N(\epsilon,\eta).$\\
\textsl{Proof of step 2:} Write $g^{(i)}=g^{(i)}+h^{(i)}-h^{(i)}=f^{(i)} \cdot \1_{\Omega^{(i)}(T)}-h^{(i)}$ then we have $$0 \leq f^{(i)}\cdot \1_{\Omega^{(i)}(T)} \leq \nu_i, \| h^{(i)} \|_{\Box_{\nu}^d} \leq \eta$$ so by the weighted von-Neumann inequality and step 1 , we have
\begin{align*}
|\Lambda_{d+1}(g^{(1)},...,g^{(d+1)})| &= |\Lambda_{d+1}(g^{(1)}+h^{(1)},...,g^{(d+1)}+h^{(d+1)})-\Lambda_{d+1}(h^{(1)},..,h^{(d)},h^{(d+1)})| \\
&\lesssim \epsilon+\eta+o_{N \rightarrow \infty}(1)\\
&\lesssim \epsilon
\end{align*}
if $\tau \leq \epsilon, \eta \leq \epsilon, N \geq N(\epsilon)$  and the proof of step 2 is completed.\\
Now since $0 \leq g^{(i)}\leq 2 $ then (after normalizing) using the \textit{ordinary hypergraph removal lemma}(Theorem 5.1), we have
\[
F_{(i)} \subseteq \X_{[d+1] \backslash \{i\}}\quad \text{such that} \quad \prod_{1 \leq k \leq d+1}\1_{F_k} \equiv 0 \quad \text{and}
\]
\[
\int_{X_1} \cdots \int_{X_{d+1}} g^{(i)} \cdot \1_{F_i^C}~~~~~~~~~~~~~~~~~~~~ d\mu_{X_1} \cdots d\mu_{X_d+1} \lesssim \delta(\epsilon)
\]
so
\begin{align*}
\int_{X_1} \cdots \int_{X_{d+1}} f^{(i)} \cdot \1_{F_i^C}~~~~~~~~~~~~~~~~~~~~ d\mu_{X_1} \cdots d\mu_{X_{d+1}} &\lesssim \delta(\epsilon)+ \underbrace{\int_{X_1} \cdots \int_{X_{d+1}} h^{(i)} \cdot \1_{F_i^C}~~~~~~~~~~~~~~~~~~~~ d\mu_{X_1} \cdots d\mu_{X_d}d\mu_{X_{d+1}}}_{(A)}+ \\
&+\underbrace{\int_{X_1} \cdots \int_{X_{d+1}} f^{(i)} \cdot \1_{\Omega_i^C(T)}\1_{F_i^C}~~~~~~~~~~~~~~~~~~~~ d\mu_{X_1} \cdots d\mu_{X_{d+1}}}_{(B)}
\end{align*}
Now for our purpose, it suffices to show $(A),(B) \lesssim \epsilon.$ \\
\textsl{Estimate for (A):} By the \textit{ regularity lemma} \footnote{We need this since we don't have something like $\|fg \|_{\Box_{\nu}} \leq \|f \|_{\Box_{\nu}}\|g \|_{\Box_{\nu}}$}, the function $\1_{F_i^C}$ could be written as a sum of $O(1)$ of functions of the form
$\prod_{j \in [d+1]\backslash \{i\}}v_j^{(i)}$
plus some functions which give a small error term $O(\epsilon)$ in (A) (using von Neumann's inequality) where $v^{(i)}_j$ is a $[0,1]$- valued function  in $\x_{[d+1]\backslash\{i,j\}}$. We could write $u^{(i)}_j=\max v^{(i)}_j$ for each fixed $i,j$ then the sum of $\prod_{j \in [d+1]\backslash\{i\}}v_j^{(i)}$ is less than $C\prod_{j \in [d+1]\backslash\{i\}}u_j^{(i)}$ for some absolute constant $C$.
Applying Cauchy-Schwartz's inequality $d$ times to estimate the expression (A) (here let assume $i<d+1$, the case $i=d+1$ is the same.) :
\begin{align*}
& \bigg(\int_{X_1} \cdots \int_{X_d}\int_{X_{d+1}} h^{(i)} \cdot \1_{F_i^C}~~~~~~~~~~~~~~~~~~~~ d\mu_{X_1} \cdots d\mu_{X_d}d\mu_{X_{d+1}} \bigg)^{2^d} \\
&\lesssim \bigg[ \bigg( \int_{X_1} \cdots \int_{X_d} \bigg( \int_{X_{d+1}} h^{(i)} \prod_{\substack{1 \leq j \leq d \\ j \neq i}}u^{(i)}_j d\mu_{X_{d+1}} \bigg) u^{i}_{d+1} d\mu_{X_1} \cdots d\mu_{X_d}d\mu_{X_{d+1}}\bigg)^2 \bigg]^{2^{d-1}}\\
&\leq \bigg[\int_{X_1} \cdots \int_{X_d} \bigg( \int_{X_{d+1}} h^{(i)}\prod_{\substack{1 \leq j \leq d-1 \\ j \neq i}}u^{(i)}_j d\mu_{X_{d+1}}\bigg)^2d\mu_{X_1} \cdots d\mu_{X_d} \\
&\quad \times \int_{X_1} \cdots \int_{X_d} \big( u^{i}_{d+1} \big)^2 d\mu_{X_1} \cdots d\mu_{X_d}\bigg]^{2^{d-1}} \\
&\lesssim \bigg[\int_{X_1} \cdots \int_{X_d} \int_{X_{d+1}}\int_{Y_{d+1}}  h^{(i)}(\x_{[d+1] \backslash \{i\}},x_{d+1})h^{(i)}(\x_{[d] \backslash\{i\}},y_{d+1}) \\
&\prod_{\substack{1 \leq j \leq d \\ j \neq i}}u^{(i)}_j(\x_{[d] \backslash\{i\}},x_{d+1})u^{(i)}_j(\x_{[d] \backslash\{i\}},y_{d+1})  d\mu_{X_1} \cdots d\mu_{X_{d+1}}d\mu_{Y_{d+1}} \bigg]^{2^{d-1}}  \\
\end{align*}
Continue applying cauchy schwartz's inequality this way. After $d$ application of cauchy Schwartz's inequality, the $u^{(i)}_j$ eventually disappears and we have this bounded by $ \| h^{(i)}\|_{\Box_{\nu}}^{2^d} \leq \epsilon.$

\textsl{Estimate for (B)}: Next we estimate the expression in (B),
\begin{align*}
&\bigg|\int_{X_1} \cdots \int_{X_{d+1}} f^{(i)}\cdot \1_{(\Omega^{(i)}(T))^C} \cdot \1_{F_i^C} d\mu_{X_1} \cdots d\mu_{X_{d+1}}\bigg| \\
&\leq \int_{X_1} \cdots \int_{X_{d+1}} (\nu_{[d+1]\backslash\{i\}})\cdot \1_{(\Omega^{(i)}(T))^C} d\mu_{X_1} \cdots d\mu_{X_{d+1}} \\
&\leq \frac1T \E_{\x_{[d+1] \backslash \{i\}}}\E_{\y_{[d+1] \backslash\{i\}}}\nu_{[d+1] \backslash \{i\}}(\x_{[d+1] \backslash \{i\}})\prod_{|I|\leq d, i \notin I}\nu_I(\x_I)\prod_{\substack{\w_I \neq \underline{0}\\I \subseteq [d+1]\backslash\{i\}}}\nu_I(P_{\w_I}(\x_I,\y_I))\\
&\lesssim \frac 1T,
\end{align*}
by the linear forms condition. Hence if we choose sufficiently large $T$ then
\[
\int_{X_1} \cdots \int_{X_{d+1}} f^{(i)} \cdot \1_{F_i^C} d\mu_{X_1} \cdots d\mu_{X_{d+1}} \lesssim \delta(\epsilon).
\]
\end{proof}

\section{proof of the main result}
\subsection{From $\Z_N$ to $\Z$}
Now recall that $\nu_{\delta_1,\delta_2}(n) \approx \frac{\phi(W)}{W}\log N, \delta_1N \leq n \leq \de_2N, \de_1,\de_2 \in (0,1]$ for a sufficiently large prime $N$ in the residue class ~~~~~~~$b \pmod{W}$
By pigeonhole principle choose a $\underline{b} \in (\Z_W^{\times })^d$ such that
\[
|A \cap (W\Z)^d+b| \geq \alpha \frac{N^d}{(\log^d N) \phi(W)^d}
\]
Now consider $A_b=\{n \in [1,N/W]^d: Wn+b \in A \}$ and let $\delta_2 \in (0,1)$ then by the Prime Number Theorem there is a prime $N'$ such that $\de_2N'=(1+\delta)\frac{N}{W}$  for arbitrarily small real number $\delta$. Then if $N$ is sufficiently large and $\delta$ is sufficiently small with respect to $\alpha$ then
\eq \label{estimate}
|A_b \cap [1,\de_2N']^d| \geq \frac{\alpha \de_2^d}{2}\frac{(N'W)^d}{(\log^d N') \phi(W)^d}
\ee
On the other hand by Dirichlet's theorem on primes in arithmetic progressions, the number $n \in [1,N']^d \backslash [\de_1N',N']^d$ for which $Wn+b \in \mathbb{P}^d$ is $\leq c_d\epsilon_1\frac{(N'W)^d}{\log^d N' \phi(W)^d}$ Hence the estimate \eqref{estimate} holds for $A':=A_W \cap [\de_1N',\de_2N']^d$ as well provided that $\de_1$ is small enough. \par
Now we may consider $A'$ in place of $A$ (we are working in the group $\Z^d_{N'}$ instead). Now if we identify the group $\Z_{N'}^d$ with $[-\frac{N'}{2},\frac{N'}{2}]^d$ then for a sufficiently small $\de_1,\de_2$ any points in $A'$ are the same when we change from $\Z_{N'}^d$ to $[-\frac{N'}{2},\frac{N'}{2}]^d$ (no wrap around issue).

\subsection{Proof of the Main Theorem}
To prove the theorem, suppose on the contrary that  $A'$ contains less than $\epsilon \frac{N'^{d+1}}{(\log N')^{2d}}$ corners.$(\epsilon=c(\alpha))$ then
\begin{align*}
&\Lambda_{d+1}(f^{(1)},...,f^{(d+1)}) \\
&=(N')^{-(d+1)}\sum_{\x_{[d+1]}}\prod_{1 \leq i \leq d}\1_{A'}(x_1,...,x_{i-1},x_{d+1}-\sum_{\substack{1 \leq j \leq d \\ j \neq i}},x_{i+1},...,x_d)\nu_I \1_{A'}(x_1,...,x_d)\cdot \nu(x_1)...\nu(x_d)\\
 &\leq \frac{1}{N'^{d+1}}\sum_{\substack{p_i \in A',1\leq i \leq 2d \\ \text{that consitutes a corner}}}\prod_{1 \leq k \leq d}1_{A'}(p_1,...,p_{k-1},p_{d+k},p_{k+1},...,p_d)\1_A(p_1,...,p_d)\nu(p_1) ... \nu(p_{2d}) \\
&\lesssim \frac{1}{N'^{d+1}}\bigg(\frac{\phi(W)\log N'}{W} \bigg)^{2d}\times( \text{The number of corners in $A'$}) \\
&\leq \epsilon
\end{align*}

Now assume that $\Lambda_{d+1}(f^{(1)},...,f^{(d)},f^{(d+1)}) \lesssim \epsilon$ then by the relative hypergraph removal lemma
\[
\exists E_i,1 \leq i \leq d+1, E_i \subseteq \X_{[d+1]\backslash \{i\}} := \tilde{X}_i,
\]
such that
\[
\prod_{1 \leq i \leq d+1}\1_{E_i} \equiv 0, \quad \int_{\tilde{X}_i}f^{(i)}\1_{E_i^C}d\mu_{\tilde{X}_i} \lesssim \delta(\epsilon)
\]
where $\delta(\epsilon) \rightarrow 0$ as $\epsilon \rightarrow 0.$
Let $A'=A\cap [\delta_1N,\delta_2N]^d, z=\sum_{1 \leq j \leq d}x_j,g_{A'}:=g \cdot \1_{A'}$ for any function $g$ then
\begin{align*}
\tilde{\Lambda} &:=N'^{-d}\sum_{(x_1,...,x_d) \in A'}f_{A'}^{(1)}(x_2,...,x_d,z)f_{A'}^{(2)}(x_1,x_3,...,x_d,z)...f_{A'}^{(d)}(x_1,x_2,...,x_{d-1},z)f^{(d+1)}_{A'}(x_1,...,x_d) \\
&\geq N'^{-d}\sum_{(x_1,...,x_d) \in A'} \nu (x_1)...\nu(x_d) \\
&\gtrsim (N')^{-d}\big(\frac{\phi(W)}{W}\log N' \big)^d \cdot \frac{\alpha \cdot (N'W)^d}{(\phi(W)\log N')^d}
=\alpha.
\end{align*}
for arbitrarily large $N'$. Now
\begin{align*}
\tilde{\Lambda} &= \E_{\x_{[d]}}(f_{A'}^{(1)}\1_{E_1}+f_{A'}^{(1)}\1_{E_1^C})...(f_{A'}^{(d+1)}\1_{E_d+1}+f_{A'}^{(d+1)}\1_{E_{d+1}^C})
\end{align*}
Now we have by the assumption
$
\E_{\x_{[d]}}f_{A'}^{(1)} \cdot \1_{E_1}...f_{A'}^{(d+1)} \cdot \1_{E_{d+1}}  \equiv 0
$
so we just need to estimate each other term individually.\\
Consider $\E_{\x_{[d]}}f_{A'}^{(1)} \cdot \1_{E_1^C}f_{A'}^{(2)} \cdot \1_{E_2^{\pm}}...f_{A'}^{(d+1)} \cdot \1_{E_{d+1}^{\pm}}$, where $F^{\pm}$ can be either  $F$ or $F^C$ for any set $F$. Now since
\[
0 \leq f_{A'}^{(j)}\1_{E_j^{\pm}} \leq \nu(x_j), d \geq j \geq 2 \quad \text{and} \quad 0 \leq f_A^{(d+1)} \leq 1
\]
We have
\begin{align*}
\E_{\x_{[d]}}f_{A'}^{(1)} \cdot \1_{E_1^C}f_{A'}^{(2)} \cdot \1_{E_1^{\pm}}...f_{A'}^{(d+1)}\cdot \1_{E_{d+1}^{\pm}} &\leq \E_{\x_{[d]}}f_{A'}^{(1)} \cdot \1_{E_1^C}\nu(x_2)...\nu(x_d) \\
&= \int_{\tilde{X}_1} f^{(1)} \cdot \1_{E_1^C} d \mu_{X_2} \cdots d \mu_{X_{d+1}}  \\
&\lesssim  \delta(\epsilon).
\end{align*}
In the same way, we have for any $1 \leq i \leq d+1,$
\[
\E_{\x_{[d]}}f_{A'}^{(i)} \cdot \1_{E_i^C}\prod_{1 \leq j \leq d+1 , j \neq i}(f^{(j)}\cdot \1_{E_j^{\pm}}) \lesssim \delta(\epsilon)
\]
So if $N'>N(\alpha)$ then
\[
\E_{\x_{[d]}} f_{A'}^{(1)}(x_2,...,x_d,u)f_{A'}^{(2)}(x_1,x_3,...,x_d,u)...f_{A'}^{(d)}(x_1,...,x_{d-1},u)f^{(d+1)}_{A'}(x_1,...,x_d) \lesssim \delta(\epsilon) = o(\alpha)
\]
This is a contradiction. Hence there are $\gtrsim \epsilon\frac{N'^{d+1}}{(\log N')^{2d}}$ corners in $A$. Note that the number of degenerated corners is at most $O(\frac{N'^d}{(\log N')^d})$ as the corner is degenerated (and will be degenerated into a single point ) iff $z=\sum_{1 \leq j \leq d}x_j.$ \\

\appendix
\section{The Green-Tao Measure and pseudorandomness}
\subsection{Pseudorandom Measure Majorizing Primes}
Let us recall the \textit{Mangoldt function} which in many problems is used to replace the indicator function of the set of primes.
\[
 \Lambda(n) =\begin{cases}  \log p &\text{if}~~~~~~~~~~~~~~~~~~~~~~~~~~~~~~~~~~~~~n=p^k,k \geq 1 \\
 0  &\text{otherwise} \end{cases}
\]
Primes has \textit{local obstructions} that prevents them from being truly random; $\Lambda(n)$ is concentrated on just $\phi(q)$ residue classes $\pmod{q}.$ To get rid of this kind of obstruction on all small residue classes Green and Tao introduced a device, the so-called \textbf{W-Trick} \cite{GT1} we we recall here. Let $\omega(N)$ be a sufficiently slowly growing function of $N$ and let $W=\prod_{p \leq \omega(N)}p$. If $b$ is any positive integer with $(b,W)=1$, then by the Prime Number Theorem, we have $W=\exp((1+o(1))\omega(N))$ and we have that $\mathbb{P}_{W,b}$ is uniformly distributed $\pmod{q}$ for $q \leq \omega(N)$. \par
Let $\mathbb{P}_{N,W,b}:=\{ n: Wn+b \in \mathbb{P_N} \}$ and define the modified von-Mangoldt function by

\begin{definition} For any fixed $(b,W)=1$, let
\label{Mangoldlt}
\begin{equation*}
\overline{\Lambda}_{b}(n)=
\begin{cases}
\frac{\phi(W)}{W}\log(Wn+b)&\text{if $Wn+b$ is prime.}\\
0&\text{otherwise,}
\end{cases}
\end{equation*}
moreover define
\[
\aLL_b^d(x_1,...,x_d)=\aLL_{b_1}(x_1) \cdots  \aLL_{b_d}(x_d) , b=(b_1,...,b_d) \in \Z_N^{\times d}
\]
\end{definition}
\noindent
Note that by the Prime Number Theorem in arithmetic progressions, we have $\E_{n \leq N}\overline{\Lambda}_{b}(n) \sim 1 .$
Let us recall now the definition of Green-Tao measure introduced in \cite{GT1}.
\begin{definition}[Goldston-Yildirim sum] \cite{GY1},\cite{GT1}
\[
\Lambda_R(n)= \sum_{d|n, d \leq R}\mu(d) \log \frac Rd
\]
\end{definition}
We may take $R=N^{d^{-1}2^{-d-5}}$
\begin{definition} [Green-Tao measure]
\label{Measure}
For given small parameters $1 \geq \delta_1,\delta_2>0$ , define a function $\nu_{\de_1,\de_2}:\Z_N \rightarrow \R$
\[
\nu_{\de_1,\de_2}(n)=\nu(n)=
\begin{cases}
\frac{\phi(W)}{W}\frac{\Lambda_R(Wn+b)^2}{\log R}&\text{if $\delta_1N \leq n \leq \delta_2N$}\\
0 &\text{otherwise}
\end{cases}
\]
\end{definition}

It is immediate from the definition that $\nu(n) \geq d^{-1}2^{-d-6} \overline{\Lambda}_b(n)$. Finally lets us recall the pseudo-randomness properties we used here; summarized in the following definitions.

\begin{definition}[Linear forms condition.] Let $m_0, t_0 \in \N$ be parameters then we say that $\nu$ satisfies  $(m_0,t_0)-$ linear form condition if for any $m \leq m_0, t \leq t_0$, suppose $\{a_{ij}\}_{\substack{1 \leq i \leq m \\ 1 \leq j \leq t}}$ are subsets of integers and $b_i \in \Z_N$. Given $m$ (affine) linear forms $L_i:\Z_N^t \rightarrow \Z_N$ with $L_i(x)=\sum_{1\leq j \leq t}a_{ij}x_j+b_i$ for $1 \leq i \leq m$ be such that each $\phi_i$ is nonzero and they are pairwise linearly independent over rational. Then
\[
\E (\prod_{1 \leq i \leq m} \nu(L_i(x)): x \in \Z_N^t)=1+o_{N \rightarrow \infty,m_0,t_0}(1)
\]
\end{definition}

\begin{definition}[Correlation condition.]
We say that a measure $\nu$ satisfies $(m_0,m_1,...,m_{l_2})-$ correlation condition if there is a function $\tau:\Z_N \rightarrow \R_{+}$ such that
\begin{enumerate}
	\item $\E(\tau(x)^m:x \in \Z_N)=O_m(1)$ for any $m \in \Z_+$
	\item Suppose
	
\begin{itemize}
	\item $\phi_i,\psi^{(k)}:\Z_N^t \rightarrow \Z_N (1 \leq i \leq l_1,1 \leq k \leq l_2, l_1+l_2 \leq m_0)$ are all pairwise linearly independent (over $\Z$)  linear forms
	\item For each $1 \leq g \leq l_2, 1 \leq j < j' \leq m_g$ we have $a_j^g \neq 0,$  and $a_j^{(g)}\psi^{(g)}(x)+h_j^{(g)}, a_{j'}^{(g)}\psi^{(g)}(x)+h_{j'}^{(g)}$ are different (affine) linear forms.
	\end{itemize}
then, we have
\begin{displaymath}
 \E_{x \in Z_N^d}\prod_{k=1}^{l_1}\nu(\phi_k(x))\prod_{k=1}^{l_2}\prod_{j=1}^{m_k}\nu(a_j^{(k)}\psi^{(k)}(x)+h_j^{(k)}) \leq \prod_{k=1}^{l_2}\sum_{1\leq j<j' \leq m_k}\tau \bigg(W(a^{(k)}_{j'}h^{(k)}_{j}-a^{(k)}_{j}h^{(k)}_{j'})+(a_{j'}^{(k)}-a_j^{(k)})b \bigg)
\end{displaymath}	
\end{enumerate}
where $W= \prod_{p \leq \omega(N)}p$.
\end{definition}

\begin{thm}
The green-Tao measure $\nu$ satisfies linear forms and correlation conditions on any parameters that may depend on $d$ or $\alpha$ (not in $N$).
\end{thm}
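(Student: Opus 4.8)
The plan is to reduce the assertion to the Goldston--Yıldırım type estimates carried out by Green and Tao \cite{GT1} (building on \cite{GY1}), the essential extra point being only that all implied constants come out uniform in the parameters. Recall that $\nu(n)=\frac{\phi(W)}{W}\frac{\Lambda_R(Wn+b)^2}{\log R}$ for $\delta_1N\le n\le\delta_2N$ and $\nu(n)=0$ otherwise, with $R=N^{d^{-1}2^{-d-5}}$. To verify the $(m_0,t_0)$-linear forms condition, fix $m\le m_0$ pairwise linearly independent affine forms $L_1,\dots,L_m:\Z_N^t\to\Z_N$ with $t\le t_0$. Expanding each factor, we write $\Lambda_R(WL_i(\underline x)+b)^2=\sum\mu(d_i)\mu(d_i')\log(R/d_i)\log(R/d_i')$ over squarefree $d_i,d_i'\le R$ coprime to $W$ with the least common multiple $[d_i,d_i']$ dividing $WL_i(\underline x)+b$; since $(W,d_i)=1$ this last condition reads $L_i(\underline x)\equiv -b\overline W\pmod{[d_i,d_i']}$, where $\overline W$ is the inverse of $W$ modulo that modulus. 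Interchanging the order of summation, the task reduces to estimating, for each tuple of divisors, the proportion of $\underline x\in\Z_N^t$ solving these simultaneous congruences, weighted by the $\mu$- and $\log$-factors.

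For $N$ a large prime this proportion factors as an Euler product over primes $p$ of local densities, and the pairwise linear independence of the $L_i$ guarantees that for every $p$ outside a fixed finite set (depending only on the $L_i$) any two \emph{distinct} forms impose independent congruences mod $p$, so the local density at such $p$ equals $p^{-r_p}$, where $r_p$ is the number of indices $i$ with $p\mid[d_i,d_i']$. Substituting back and evaluating the sum over the $d_i,d_i'$ by the usual contour-shift (Mellin transform) argument --- expressing $\log(R/d)$ through $R^{\pm s}$, recognising the Dirichlet series as a perturbation of $\zeta(1+s)^m$ and moving the contour past its pole --- the $m$ factors of $\log R$ in the denominator cancel precisely, leaving $1+o(1)$; every error term depends only on $m$ and $t$, hence only on $d$ and $\alpha$, not on $N$ or $W$. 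The cutoff $\mathbf 1_{[\delta_1N,\delta_2N]}$ merely restricts the range and contributes a single archimedean factor, handled exactly as in \cite{GT1}.

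For the correlation condition one runs the identical expansion for $\E\prod_{k=1}^{l_1}\nu(\phi_k(\underline x))\prod_{k=1}^{l_2}\prod_{j=1}^{m_k}\nu(a_j^{(k)}\psi^{(k)}(\underline x)+h_j^{(k)})$. The one new feature is that within a block $k$ the forms $a_j^{(k)}\psi^{(k)}+h_j^{(k)}$ all have the same linear part $\psi^{(k)}$, so modulo a prime $p$ the congruences indexed by $j\ne j'$ are \emph{compatible} only when $p$ divides $W(a_{j'}^{(k)}h_j^{(k)}-a_j^{(k)}h_{j'}^{(k)})+(a_{j'}^{(k)}-a_j^{(k)})b$; at those exceptional primes the relevant local factor is $O(1/p)$ instead of $O(p^{-m_k})$. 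Carrying this over-count through the sum over divisors produces exactly the weight function $\tau$ of the Correlation Condition, and the moment bound $\E_{x\in\Z_N}\tau(x)^q=O_q(1)$ follows from an estimate of the form $\tau(x)\ll\prod_{p\mid x}\bigl(1+O(p^{-1/2})\bigr)$ together with a routine divisor-moment computation; again all constants are uniform in $d$ and $\alpha$.

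I expect the main obstacle to be one of exposition rather than of new mathematics: the analytic core --- the contour-shift evaluation of the singular series and the uniform treatment of the exceptional primes in the correlation estimate --- is precisely \cite{GT1}, and reproducing it in full is long while adding nothing. The only genuinely new verification is the uniformity bookkeeping, namely that because the number of forms, their complexity and the correlation length are all bounded in terms of $d$ and $\alpha$ alone, every $o(1)$ is $o_{d,\alpha;\,N\to\infty}(1)$ and every $O(1)$ is $O_{d,\alpha}(1)$, and that replacing the trivial residue by a general $b$ with $(b,W)=1$ in the $W$-trick affects none of this. We therefore invoke those estimates directly.
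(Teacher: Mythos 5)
Your proposal matches the paper's approach: both reduce the verification to the Goldston--Yildirim contour-shift estimates of Green--Tao \cite{GT1} and, for the correlation condition in the present multi-parameter form, to \cite{MC1} (Proposition 4), observing that the uniformity in the auxiliary parameters and the $W$-trick bookkeeping carry through without essential change. The paper states this even more tersely, simply citing those references and omitting details, whereas your sketch of the divisor expansion, the Euler-product factorization of local densities, the exceptional-prime analysis producing the weight $\tau$, and the uniformity bookkeeping is an accurate summary of the cited arguments rather than a different route.
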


The proof of the linear forms condition is given in \cite{GT1}, as well as the proof of a slightly simpler form of the correlation condition. The above correlation condition is essentially the same as the one given in \cite{MC1}, Proposition 4. In fact the argument there works without any modification, save for a minor change in calculating the so-called local factors, see Lemma 3 there. We omit the details.

\end{document}